\documentclass[letterpaper]{amsart}
\usepackage{amssymb}
\usepackage{amsmath}
\usepackage{amsfonts}

\setcounter{MaxMatrixCols}{10}

\newtheorem{theorem}{Theorem}[section]
\theoremstyle{plain}

\newtheorem{definition}{Definition}[section]
\newtheorem{example}{Example}

\newtheorem{lemma}{Lemma}[section]

\newtheorem{proposition}{Proposition}[section]
\newtheorem{remark}{Remark}[section]

\numberwithin{equation}{section}
\input{tcilatex}

\begin{document}
\title[Unsteady Navier-Stokes type equations]{Deterministic Homogenization
of Unsteady Navier-Stokes type equations }
\author{Lazarus Signing}
\address{University of Ngaoundere, Department of Mathematics and Computer
Science, P.O.Box \ 454 Ngaoundere (Cameroon)}
\email{lsigning@uy1.uninet.cm}
\urladdr{}
\thanks{}
\urladdr{}
\date{}
\subjclass[2000]{ 35B27, 35B40, 73B27, 76D30}
\keywords{Deterministic homogenization, sigma-convergence, Navier-Stokes
equations, porous media.}
\dedicatory{}
\thanks{}

\begin{abstract}
In this paper we study the deterministic homogenization problems for
unsteady Navier-Stokes type equations, on one hand in an open set $\Omega $
of $\mathbb{R}^{N}$, on the other hand in porous media $\Omega ^{\varepsilon
}$. In the second case, the equations are classical unsteady Navier-Stokes
one, and the porous media are periodic. \ 
\end{abstract}

\maketitle

\section{Introduction}

We study the homogenization of unsteady Navier-Stokes type equations in two
distinct settings. In the first setting, the equations are considered in a
fixed bounded open set in the $N$-dimensional numerical space and moreover
the usual Laplace operator involved in the classical Navier-Stokes equations
is replaced by an elliptic linear differential operator of order two, in
divergence form, with spatially varying coefficients. In the second setting,
the equations are the classical unsteady Navier-Stokes one and are
considered in periodic porous media. Precisely, we investigate the following
problems:

\textbf{Problem I.} Let $\Omega $ be a smooth bounded open set of $\mathbb{R}%
_{x}^{N}$ (the $N$-dimensional numerical space of variable $x=\left(
x_{1},...,x_{N}\right) $), and let $T$ and $\varepsilon $ be real numbers
with $T>0$ and $0<\varepsilon <1$. We consider the partial differential
operator 
\begin{equation*}
P^{\varepsilon }=-\sum_{i,j=1}^{N}\frac{\partial }{\partial x_{i}}\left(
a_{ij}^{\varepsilon }\frac{\partial }{\partial x_{j}}\right)
\end{equation*}%
in $\Omega \times ]0,T[$, where $a_{ij}^{\varepsilon }\left( x\right)
=a_{ij}\left( \frac{x}{\varepsilon }\right) $\quad ($x\in \Omega $), $%
a_{ij}\in L^{\infty }\left( \mathbb{R}_{y}^{N};\mathbb{R}\right) $ ($1\leq
i,j\leq N$) with 
\begin{equation}
a_{ij}=a_{ji}\text{,}  \label{eq1.1}
\end{equation}%
and the assumption that there is a constant $\alpha >0$ such that 
\begin{equation}
\sum_{i,j=1}^{N}a_{ij}\left( y\right) \zeta _{j}\zeta _{i}\geq \alpha
\left\vert \zeta \right\vert ^{2}\text{\ for all }\zeta =\left( \zeta
_{j}\right) \in \mathbb{R}^{N}\text{ and}  \label{eq1.2}
\end{equation}%
for almost all $y\in \mathbb{R}^{N}$, $\mathbb{R}_{y}^{N}$ being the $N$%
-demensional numerical space $\mathbb{R}^{N}$ of variables $y=\left(
y_{1},...,y_{N}\right) $ and $\left\vert {\small \cdot }\right\vert $
denoting the Euclidean norm in $\mathbb{R}^{N}$. The operator $%
P^{\varepsilon }$ acts on scalar functions, say $\varphi \in L^{2}\left(
0,T;H^{1}\left( \Omega \right) \right) $. However, we may as well view $%
P^{\varepsilon }$ as acting on vector functions $\mathbf{u}=\left(
u^{i}\right) \in L^{2}\left( 0,T;H^{1}\left( \Omega \right) ^{N}\right) $ in
a \textit{diagonal way}, i.e.,%
\begin{equation*}
\left( P^{\varepsilon }\mathbf{u}\right) ^{i}=P^{\varepsilon }u^{i}\text{%
\qquad }\left( i=1,...,N\right) \text{.}
\end{equation*}%
For any Roman character such as $i$, $j$ (with $1\leq i,j\leq N$), $u^{i}$
(resp. $u^{j}$) denotes the $i$-th (resp. $j$-th) component of a vector
function $\mathbf{u}$ in $L_{loc}^{1}\left( \Omega \times ]0,T[\right) ^{N}$
or in $L_{loc}^{1}\left( \mathbb{R}_{y}^{N}\times \mathbb{R}_{\tau }\right)
^{N}$ where $\mathbb{R}_{\tau }$ is the numerical space $\mathbb{R}$ of
variables $\tau $. Further, for any real $0<\varepsilon <1$, we define $%
u^{\varepsilon }$ as 
\begin{equation*}
u^{\varepsilon }\left( x,t\right) =u\left( \frac{x}{\varepsilon },\frac{t}{%
\varepsilon }\right) \text{\qquad }\left( \left( x,t\right) \in \Omega
\times ]0,T[\right)
\end{equation*}%
for $u\in L_{loc}^{1}\left( \mathbb{R}_{y}^{N}\times \mathbb{R}_{\tau
}\right) $, as is customary in homogenization theory. More generally, for $%
u\in L_{loc}^{1}\left( Q\times \mathbb{R}_{y}^{N}\times \mathbb{R}_{\tau
}\right) $ with $Q=\Omega \times ]0,T[$, it is customary to put%
\begin{equation*}
u^{\varepsilon }\left( x,t\right) =u\left( x,t,\frac{x}{\varepsilon },\frac{t%
}{\varepsilon }\right) \text{\qquad }\left( \left( x,t\right) \in \Omega
\times ]0,T[\right)
\end{equation*}%
whenever the right-hand side makes sense (see, e.g., \cite{bib15}).

After these preliminaries, let $\mathbf{f}=\left( f^{i}\right) \in
L^{2}\left( 0,T;H^{-1}\left( \Omega ;\mathbb{R}\right) ^{N}\right) $. For
any fixed $0<\varepsilon <1$, we consider the initial boundary value problem%
\begin{equation}
\frac{\partial \mathbf{u}_{\varepsilon }}{\partial t}+P^{\varepsilon }%
\mathbf{u}_{\varepsilon }+\sum_{j=1}^{N}u_{\varepsilon }^{j}\frac{\partial 
\mathbf{u}_{\varepsilon }}{\partial x_{j}}+\mathbf{grad}p_{\varepsilon }=%
\mathbf{f}\text{ in }\Omega \times ]0,T[\text{,}  \label{eq1.3}
\end{equation}%
\begin{equation}
div\mathbf{u}_{\varepsilon }=0\text{ in }\Omega \times ]0,T[\text{,}
\label{eq1.4}
\end{equation}%
\begin{equation}
\mathbf{u}_{\varepsilon }=0\text{ on }\partial \Omega \times ]0,T[\text{,}
\label{eq1.5}
\end{equation}%
\begin{equation}
\mathbf{u}_{\varepsilon }\left( 0\right) =0\text{ in }\Omega  \label{eq1.6}
\end{equation}%
where $\frac{\partial \mathbf{u}_{\varepsilon }}{\partial x_{j}}=\left( 
\frac{\partial u^{1}}{\partial x_{j}},...,\frac{\partial u^{N}}{\partial
x_{j}}\right) $. As in \cite{bib31}, for $N=2$ (\ref{eq1.3})-(\ref{eq1.6})
uniquely define $\left( \mathbf{u}_{\varepsilon },p_{\varepsilon }\right) $
with $\mathbf{u}_{\varepsilon }\in \mathcal{W}\left( 0,T\right) $ and $%
p_{\varepsilon }\in L^{2}\left( 0,T;L^{2}\left( \Omega ;\mathbb{R}\right) 
\mathfrak{/}\mathbb{R}\right) $, where 
\begin{equation*}
\mathcal{W}\left( 0,T\right) =\left\{ \mathbf{u}\in L^{2}\left( 0,T;V\right)
:\mathbf{u}^{\prime }\in L^{2}\left( 0,T;V^{\prime }\right) \right\}
\end{equation*}%
$V$ being the space of functions $\mathbf{u}$ in $H_{0}^{1}\left( \Omega ;%
\mathbb{R}\right) ^{N}$ with $div\mathbf{u}=0$ ($V^{\prime }$ is the
topological dual of $V$), and where 
\begin{equation*}
L^{2}\left( \Omega ;\mathbb{R}\right) \mathfrak{/}\mathbb{R}=\left\{ v\in
L^{2}\left( \Omega ;\mathbb{R}\right) :\int_{\Omega }vdx=0\right\} \text{.}
\end{equation*}%
Let us recall that $\mathcal{W}\left( 0,T\right) $ is provided with the norm%
\begin{equation*}
\left\Vert \mathbf{u}\right\Vert _{\mathcal{W}\left( 0,T\right) }=\left(
\left\Vert \mathbf{u}\right\Vert _{L^{2}\left( 0,T;V\right) }^{2}+\left\Vert 
\mathbf{u}^{\prime }\right\Vert _{L^{2}\left( 0,T;V^{\prime }\right)
}^{2}\right) ^{\frac{1}{2}}\text{\qquad }\left( \mathbf{u}\in \mathcal{W}%
\left( 0,T\right) \right) \text{,}
\end{equation*}%
which makes it a Hilbert space with the following properties (see \cite%
{bib31}): $\mathcal{W}\left( 0,T\right) $ is continuously embedded in $%
\mathcal{C}\left( \left[ 0,T\right] ;L^{2}\left( \Omega \right) ^{N}\right) $
and is compactly embedded in $L^{2}\left( 0,T;L^{2}\left( \Omega \right)
^{N}\right) $.

Our aim here is to investigate the asymptotic behavior, as $\varepsilon
\rightarrow 0$, of $\left( \mathbf{u}_{\varepsilon },p_{\varepsilon }\right) 
$ under an abstract assumption on $a_{ij}$ $\left( 1\leq i,j\leq N\right) $
covering a wide range of concrete behaviours beyond the classical
periodicity hypothesis. The latter states that $a_{ij}\left( y+k\right)
=a_{ij}\left( y\right) $ for almost all $y\in \mathbb{R}^{N}$ and for all $%
k\in \mathbb{Z}^{N}$($\mathbb{Z}$ denotes the integers). The study of this
problem turns out to be of benefit to the modelling of heterogeneous fluid
flows, in particular multi-phase flows, fluids with spatially varying
viscosities, and others. The linear version of this problem (i.e., the
homogenization of (\ref{eq1.3})-(\ref{eq1.6}) without the term $%
\sum_{j=1}^{N}u_{\varepsilon }^{j}\frac{\partial \mathbf{u}_{\varepsilon }}{%
\partial x_{j}}$) has been studied by the author \cite{bib27} under the
periodicity hypothesis on the coefficients $a_{ij}$ via the two-scale
convergence techniques. We mention also the paper by Choe and Kim \cite{bib5}
dealing with that linear version by the well known asymptotic expansion
combined with Tartar's energy method. Further, the steady version was first
investigated in \cite{bib22} by the \textit{sigma-convergence} method. This
paper deals with a more complicated situation where the equations are
non-stationary and non-linear, and the estimates of the pressure and the
acceleration become a laborious issue as it is shown in the proof of
Proposition \ref{pr2.1}. As far as i know, this topic has not yet been
seriously investigated.

The main result of this part of the work can be stated as follows: Let $%
\left( \mathbf{u}_{\varepsilon },p_{\varepsilon }\right) \in \mathcal{W}%
\left( 0,T\right) \times L^{2}\left( 0,T;L^{2}\left( \Omega ;\mathbb{R}%
\right) \mathfrak{/}\mathbb{R}\right) $ be the unique solution of (\ref%
{eq1.3})-(\ref{eq1.6}). As $\varepsilon $ goes to zero, $\left( \mathbf{u}%
_{\varepsilon },p_{\varepsilon }\right) $ converges in some topology to some

$\left( \mathbf{u}_{0},p_{0}\right) \in \mathcal{W}\left( 0,T\right) \times
L^{2}\left( 0,T;L^{2}\left( \Omega ;\mathbb{R}\right) \mathfrak{/}\mathbb{R}%
\right) $, where $\left( \mathbf{u}_{0},p_{0}\right) $ is the unique
solution of the initial boundary value problem (\ref{eq2.53})-(\ref{eq2.56}%
). The macroscopic homogenized equations (\ref{eq2.53})-(\ref{eq2.56}) is of
the incompressible Navier-Stokes type. This result is proved in the periodic
setting by Theorem \ref{th2.3} and Lemma \ref{lem2.5}, and in general
deterministic setting by Theorem \ref{th3.1} and Lemma \ref{lem3.2}.

Our approach is the \textit{sigma-convergence} method derived from two-scale
convergence ideas \cite{bib1}, \cite{bib14} by means of so-called
homogenization algebras \cite{bib17}, \cite{bib18}.

\textbf{Problem II.} Let us put 
\begin{equation*}
Y=\left( -\frac{1}{2},\frac{1}{2}\right) ^{N}
\end{equation*}%
with $N\geq 2$, $Y$ being viewed as a subset of $\mathbb{R}_{y}^{N}$ (the
space $\mathbb{R}^{N}$ of variables $y=\left( y_{1},...,y_{N}\right) $). Let 
$Y_{s}$ be a connected open set in $\mathbb{R}^{N}$ with $\overline{Y}%
_{s}\subset Y$ ($\overline{Y}_{s}$ the closure of $Y_{s}$ in $\mathbb{R}^{N}$%
) and with smooth boundary $\Gamma =\partial Y_{s}$. We put 
\begin{equation*}
Y_{f}=Y\backslash \overline{Y}_{s}\text{ }
\end{equation*}%
and 
\begin{equation*}
\Theta =\underset{k\in \mathbb{Z}^{N}}{{\LARGE \cup }}\left( k+\overline{Y}%
_{s}\right) \text{.}
\end{equation*}%
In view of the compactness of $\overline{Y}_{s}$, it is an easy exercise to
verify that $\Theta $ is closed in $\mathbb{R}^{N}$. Now, let $\Omega $ be a
connected smooth bounded open set in $\mathbb{R}_{x}^{N}$. Let $%
0<\varepsilon <1$. We define 
\begin{equation*}
\Omega _{\varepsilon }=\Omega \backslash \varepsilon \Theta \text{.}
\end{equation*}%
This is a Lipschitz bounded open set in $\mathbb{R}_{x}^{N}$.

We are now in a position to state the problem under consideration in the
present setting. Let $T>0$. Given $\mathbf{f=}\left( f^{i}\right) \in
L^{2}\left( 0,T;L^{2}\left( \Omega ;\mathbb{R}\right) ^{N}\right) $, we
consider the initial boundary value problem in $\Omega _{\varepsilon }\times
]0,T[$ for the Navier-Stokes equations: 
\begin{equation}
\frac{\partial \mathbf{u}_{\varepsilon }}{\partial t}-\nu \Delta \mathbf{u}%
_{\varepsilon }+\sum_{j=1}^{N}u_{\varepsilon }^{j}\frac{\partial \mathbf{u}%
_{\varepsilon }}{\partial x_{j}}+\mathbf{grad}p_{\varepsilon }=\mathbf{f}%
\text{ in }\Omega _{\varepsilon }\times ]0,T[\text{,}  \label{eq1.7}
\end{equation}%
\begin{equation}
div\mathbf{u}_{\varepsilon }=0\text{ in }\Omega _{\varepsilon }\times ]0,T[%
\text{,}  \label{eq1.8}
\end{equation}%
\begin{equation}
\mathbf{u}_{\varepsilon }=0\text{ on }\partial \Omega _{\varepsilon }\times
]0,T[\text{,}  \label{eq1.9}
\end{equation}%
\begin{equation}
\mathbf{u}_{\varepsilon }\left( 0\right) =0\text{ in }\Omega _{\varepsilon }
\label{eq1.10}
\end{equation}%
where $\nu >0$ is the kinematic viscosity coefficient. For $N=2$ the problem
(\ref{eq1.7})-(\ref{eq1.10}) admits a unique solution $\left( \mathbf{u}%
_{\varepsilon },p_{\varepsilon }\right) $ with $\mathbf{u}_{\varepsilon }\in
L^{2}\left( 0,T;H_{0}^{1}\left( \Omega _{\varepsilon }\right) ^{N}\right) $
and $p_{\varepsilon }\in L^{2}\left( 0,T;L^{2}\left( \Omega _{\varepsilon
}\right) /\mathbb{R}\right) $ (see, e.g., \cite{bib10}, \cite{bib31}).

The aim is to study the limiting behavior of $\left( \mathbf{u}_{\varepsilon
},p_{\varepsilon }\right) $ as $\varepsilon \rightarrow 0$. In other words,
our purpose here is to discuss the homogenization of the initial boundary
value problem, (\ref{eq1.7})-(\ref{eq1.10}), the non-stationary
Navier-Stokes equations governing an incompressible fluid flow in the domain 
$\Omega _{\varepsilon }$.

Many authors have studied the homogenization, in porous media, of fluid
flows governed by the Stokes as well as the Navier-Stokes equations in
various physical contexts. We refer for example to \cite{bib2}, \cite{bib3}, 
\cite{bib6}, \cite{bib8}, \cite{bib12} and \cite{bib13}. Those authors
derive mostly the Darcy's law without the proof of a global convergence
result as it is stated and proved in Theorem \ref{th4.1}. Our topic here is
concerned with the sigma-convergence of the non-stationary incompressible
Navier-Stokes equations in porous media, with it rigorous proofs of
convergence results.

By means of the sigma-convergence, we derive the homogenized problem for (%
\ref{eq1.7})-(\ref{eq1.10}) which is given by (\ref{eq4.23})-(\ref{eq4.24}).
Equation (\ref{eq4.24}) is the Darcy's law with a time parameter. A similar
result as been established for the stationary case in \cite[Section 4]{bib23}%
, but in view of the difficulties encountered in the proof of estimates for
the pressure and the acceleration in the non-stationary case, Theorem \ref%
{th4.1} seems not to have been published before in the literature.

Unless otherwise specified, vector spaces throughout are considered over the
complex field, $\mathbb{C}$, and scalar functions are assumed to take
complex values. Let us recall some basic notation. If $X$ and $F$ denote a
locally compact space and a Banach space, respectively, then we write $%
\mathcal{C}\left( X;F\right) $ for continuous mappings of $X$ into $F$, and $%
\mathcal{B}\left( X;F\right) $ for those mappings in $\mathcal{C}\left(
X;F\right) $ that are bounded. We denote by $\mathcal{K}\left( X;F\right) $
the mappings in $\mathcal{C}\left( X;F\right) $ having compact supports.\ We
shall assume $\mathcal{B}\left( X;F\right) $ to be equipped with the
supremum norm $\left\Vert u\right\Vert _{\infty }=\sup_{x\in X}\left\Vert
u\left( x\right) \right\Vert $ ($\left\Vert {\small \cdot }\right\Vert $
denotes the norm in $F$). For shortness we will write $\mathcal{C}\left(
X\right) =\mathcal{C}\left( X;\mathbb{C}\right) $, $\mathcal{B}\left(
X\right) =\mathcal{B}\left( X;\mathbb{C}\right) $ and $\mathcal{K}\left(
X\right) =\mathcal{K}\left( X;\mathbb{C}\right) $. Likewise in the case when 
$F=\mathbb{C}$, the usual spaces $L^{p}\left( X;F\right) $ and $%
L_{loc}^{p}\left( X;F\right) $ ($X$ provided with a positive Radon measure)
will be denoted by $L^{p}\left( X\right) $ and $L_{loc}^{p}\left( X\right) $%
, respectively. Finally, the numerical space $\mathbb{R}^{N}$ and its open
sets are each provided with Lebesgue measure denoted by $dx=dx_{1}...dx_{N}$.

The rest of the paper is organized as follows. Section 2 is devoted to the
homogenization of (\ref{eq1.3})-(\ref{eq1.6}) under the periodicity
assumption on the coefficients $a_{ij}$. In Section 3 we reconsider the
homogenization of (\ref{eq1.3})-(\ref{eq1.6}) in a more general setting. The
periodicity hypothesis on the coefficients $a_{ij}$ is here replaced by an
abstract assumption covering a variety of concrete behaviours, the
periodicity being a particular case. Finally, in Section 4 we discuss the
homogenization of problem (\ref{eq1.7})-(\ref{eq1.10}).

\section{Periodic homogenization of unsteady Navier-Stokes type equations}

\subsection{Preliminaries}

Let $\Omega $ be a smooth bounded open set in $\mathbb{R}^{N}$. For fixed $%
0<\varepsilon <1$, we introduce the bilinear form $a^{\varepsilon }$ on $%
H_{0}^{1}\left( \Omega ;\mathbb{R}\right) ^{N}\times H_{0}^{1}\left( \Omega ;%
\mathbb{R}\right) ^{N}$ defined by%
\begin{equation*}
a^{\varepsilon }\left( \mathbf{u},\mathbf{v}\right)
=\sum_{k=1}^{N}\sum_{i,j=1}^{N}\int_{\Omega }a_{ij}^{\varepsilon }\frac{%
\partial u^{k}}{\partial x_{j}}\frac{\partial v^{k}}{\partial x_{i}}dx
\end{equation*}%
for $\mathbf{u}=\left( u^{k}\right) $ and $\mathbf{v}=\left( v^{k}\right) $
in $H_{0}^{1}\left( \Omega ;\mathbb{R}\right) ^{N}$. By virtue of (\ref%
{eq1.1}), the form $a^{\varepsilon }$ is symmetric. Further, in view of (\ref%
{eq1.2}), 
\begin{equation}
a^{\varepsilon }\left( \mathbf{v},\mathbf{v}\right) \geq \alpha \left\Vert 
\mathbf{v}\right\Vert _{H_{0}^{1}\left( \Omega \right) ^{N}}^{2}
\label{eq2.1}
\end{equation}%
for every $\mathbf{v}=\left( v^{k}\right) \in H_{0}^{1}\left( \Omega ;%
\mathbb{R}\right) ^{N}$ and $0<\varepsilon <1$, where 
\begin{equation*}
\left\Vert \mathbf{v}\right\Vert _{H_{0}^{1}\left( \Omega \right)
^{N}}=\left( \sum_{k=1}^{N}\int_{\Omega }\left\vert \nabla v^{k}\right\vert
dx\right) ^{\frac{1}{2}}
\end{equation*}%
with $\nabla v^{k}=\left( \frac{\partial v^{k}}{\partial x_{1}},...,\frac{%
\partial v^{k}}{\partial x_{N}}\right) $. On the other hand, it is clear
that a constant $c_{0}>0$ exists such that%
\begin{equation}
\left\vert a^{\varepsilon }\left( \mathbf{u},\mathbf{v}\right) \right\vert
\leq c_{0}\left\Vert \mathbf{u}\right\Vert _{H_{0}^{1}\left( \Omega \right)
^{N}}\left\Vert \mathbf{v}\right\Vert _{H_{0}^{1}\left( \Omega \right) ^{N}}
\label{eq2.2}
\end{equation}%
for every $\mathbf{u}$, $\mathbf{v}\in H_{0}^{1}\left( \Omega ;\mathbb{R}%
\right) ^{N}$ and $0<\varepsilon <1$. We introduce also the trilinear form $%
b $ on $H_{0}^{1}\left( \Omega ;\mathbb{R}\right) ^{N}\times H_{0}^{1}\left(
\Omega ;\mathbb{R}\right) ^{N}\times H_{0}^{1}\left( \Omega ;\mathbb{R}%
\right) ^{N}$ defined by%
\begin{equation*}
b\left( \mathbf{u},\mathbf{v},\mathbf{w}\right)
=\sum_{k=1}^{N}\sum_{j=1}^{N}\int_{\Omega }u^{j}\frac{\partial v^{k}}{%
\partial x_{j}}w^{k}dx
\end{equation*}%
for $\mathbf{u}=\left( u^{k}\right) $, $\mathbf{v}=\left( v^{k}\right) $ et $%
\mathbf{w}=\left( w^{k}\right) \in H_{0}^{1}\left( \Omega ;\mathbb{R}\right)
^{N}$. The form $b$ has the following properties \cite[pp.162-163]{bib31}:%
\begin{equation}
\left\vert b\left( \mathbf{u},\mathbf{v},\mathbf{w}\right) \right\vert \leq
c\left( N\right) \left\Vert \mathbf{u}\right\Vert _{H_{0}^{1}\left( \Omega
\right) ^{N}}\left\Vert \mathbf{v}\right\Vert _{H_{0}^{1}\left( \Omega
\right) ^{N}}\left\Vert \mathbf{w}\right\Vert _{H_{0}^{1}\left( \Omega
\right) ^{N}}  \label{eq2.3}
\end{equation}%
for all $\mathbf{u}$, $\mathbf{v}$ and $\mathbf{w}\in H_{0}^{1}\left( \Omega
;\mathbb{R}\right) ^{N}$, where the positive constant $c\left( N\right) $
depends on $N$ and $\Omega $;%
\begin{equation}
b\left( \mathbf{u},\mathbf{v},\mathbf{v}\right) =0\qquad \left( \mathbf{u}%
\in V\text{, }\mathbf{v}\in H_{0}^{1}\left( \Omega ;\mathbb{R}\right)
^{N}\right)  \label{eq2.4}
\end{equation}%
and%
\begin{equation}
b\left( \mathbf{u},\mathbf{v},\mathbf{w}\right) =-b\left( \mathbf{u},\mathbf{%
w},\mathbf{v}\right) \qquad \left( \mathbf{u}\in V\text{, }\mathbf{v}\text{
et }\mathbf{w}\in H_{0}^{1}\left( \Omega ;\mathbb{R}\right) ^{N}\right) 
\text{.}  \label{eq2.5}
\end{equation}%
For $\mathbf{u}$ and $\mathbf{v}\in H_{0}^{1}\left( \Omega ;\mathbb{R}%
\right) ^{N}$, let us consider the linear form $B\left( \mathbf{u},\mathbf{v}%
\right) $ on $H_{0}^{1}\left( \Omega ;\mathbb{R}\right) ^{N}$ defined by%
\begin{equation*}
\left\langle B\left( \mathbf{u},\mathbf{v}\right) ,\mathbf{w}\right\rangle
=b\left( \mathbf{u},\mathbf{v},\mathbf{w}\right) \qquad \left( \mathbf{w}\in
H_{0}^{1}\left( \Omega ;\mathbb{R}\right) ^{N}\right) \text{.}
\end{equation*}%
Let us set for $\mathbf{u}\in H_{0}^{1}\left( \Omega ;\mathbb{R}\right) ^{N}$%
\begin{equation*}
\widetilde{B}\left( \mathbf{u}\right) =B\left( \mathbf{u},\mathbf{u}\right) 
\text{.}
\end{equation*}%
In view of (\ref{eq2.3}), we have $\widetilde{B}\left( \mathbf{u}\right) \in
H^{-1}\left( \Omega ;\mathbb{R}\right) ^{N}$ and 
\begin{equation*}
\left\Vert \widetilde{B}\left( \mathbf{u}\right) \right\Vert _{H^{-1}\left(
\Omega \right) ^{N}}\leq c\left( N\right) \left\Vert \mathbf{u}\right\Vert
_{H_{0}^{1}\left( \Omega \right) ^{N}}^{2}\text{.}
\end{equation*}

Before we can establish some estimates on the velocity $\mathbf{u}%
_{\varepsilon }$, the acceleration $\frac{\partial \mathbf{u}_{\varepsilon }%
}{\partial t}$ and the pressure $p_{\varepsilon }$, let us recall the
following results.

\begin{lemma}
\label{lem2.1} Let $\Omega $ be a bounded open set in $\mathbb{R}^{2}$. We
have the following inequalities:%
\begin{equation}
\left\Vert v\right\Vert _{L^{4}\left( \Omega \right) }\leq 2^{\frac{1}{4}%
}\left\Vert v\right\Vert _{L^{2}\left( \Omega \right) }^{\frac{1}{2}%
}\left\Vert \mathbf{grad}v\right\Vert _{L^{2}\left( \Omega \right) }^{\frac{1%
}{2}}\qquad \left( v\in H_{0}^{1}\left( \Omega ;\mathbb{R}\right) \right) 
\text{,}  \label{eq2.6}
\end{equation}%
\begin{equation}
\left\vert b\left( \mathbf{u},\mathbf{v},\mathbf{w}\right) \right\vert \leq
2^{\frac{1}{2}}\left\vert \mathbf{u}\right\vert ^{\frac{1}{2}}\left\Vert 
\mathbf{u}\right\Vert ^{\frac{1}{2}}\left\Vert \mathbf{v}\right\Vert
\left\vert \mathbf{w}\right\vert ^{\frac{1}{2}}\left\Vert \mathbf{w}%
\right\Vert ^{\frac{1}{2}}\qquad \left( \mathbf{u}\text{, }\mathbf{v}\text{, 
}\mathbf{w}\in H_{0}^{1}\left( \Omega ;\mathbb{R}\right) ^{2}\right) \text{,}
\label{eq2.7}
\end{equation}%
where $\left\vert {\small \cdot }\right\vert $ and $\left\Vert {\small \cdot 
}\right\Vert $ are respectively the norms in $L^{2}\left( \Omega \right)
^{N} $ and $H_{0}^{1}\left( \Omega \right) ^{N}$. Moreover, if $\mathbf{u}%
\in L^{2}\left( 0,T;V\right) \cap L^{\infty }\left( 0,T;H\right) $, $H$
being the closure of $\mathcal{V}=\left\{ \mathbf{u}\in \mathcal{D}\left(
\Omega ;\mathbb{R}\right) ^{2}:div\mathbf{u}=0\right\} $ in $L^{2}\left(
\Omega ;\mathbb{R}\right) ^{2}$, then $\widetilde{B}\left( \mathbf{u}\right)
\in L^{2}\left( 0,T;V^{\prime }\right) $ and%
\begin{equation}
\left\Vert \widetilde{B}\left( \mathbf{u}\right) \right\Vert _{L^{2}\left(
0,T;V^{\prime }\right) }\leq 2^{\frac{1}{2}}\left\Vert \mathbf{u}\right\Vert
_{L^{\infty }\left( 0,T;H\right) }\left\Vert \mathbf{u}\right\Vert
_{L^{2}\left( 0,T;V\right) }\text{.}  \label{eq2.8}
\end{equation}
\end{lemma}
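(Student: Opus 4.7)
The plan is to prove the three inequalities in the order stated: first the two-dimensional Ladyzhenskaya inequality (2.6), then the trilinear bound (2.7) as a Hölder-type consequence of (2.6), and finally (2.8) by combining (2.7) with the antisymmetry property (2.5) already at hand.

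For (2.6) I would work by density with $v\in\mathcal{D}(\Omega;\mathbb{R})$ extended by zero to $\mathbb{R}^{2}$. Write $v^{2}(x_{1},x_{2})=2\int_{-\infty}^{x_{1}}v\,\partial_{1}v(s,x_{2})\,ds$ and analogously in the $x_{2}$-direction; each such one-variable integral is bounded by the full-line integral of $|v\partial_{i}v|$. Multiplying the two pointwise estimates gives $v^{4}(x)\le 4\,\phi(x_{2})\,\psi(x_{1})$, where $\phi(x_{2})=\int_{\mathbb{R}}|v\partial_{1}v|(s,x_{2})\,ds$ and $\psi(x_{1})=\int_{\mathbb{R}}|v\partial_{2}v|(x_{1},t)\,dt$. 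Integrating over $\mathbb{R}^{2}$ factors as a product of single integrals, and Cauchy--Schwarz (applied line-by-line in each variable) yields $\|v\|_{L^{4}}^{4}\le 4\|v\|_{L^{2}}^{2}\|\partial_{1}v\|_{L^{2}}\|\partial_{2}v\|_{L^{2}}$. The arithmetic--geometric inequality $2ab\le a^{2}+b^{2}$ on the last two factors then lowers the constant to $2$, and (2.6) follows by extracting a fourth root.

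For (2.7), apply Hölder with exponents $(4,2,4)$ termwise in the definition of $b$: $\int|u^{j}\partial_{j}v^{k}w^{k}|\,dx\le\|u^{j}\|_{L^{4}}\|\partial_{j}v^{k}\|_{L^{2}}\|w^{k}\|_{L^{4}}$. Using Cauchy--Schwarz first on the sum over $k$ extracts the factor $\|\mathbf{v}\|$, and a second Cauchy--Schwarz on the sum over $j$ yields $|b(\mathbf{u},\mathbf{v},\mathbf{w})|\le\bigl(\sum_{j}\|u^{j}\|_{L^{4}}^{2}\bigr)^{1/2}\|\mathbf{v}\|\bigl(\sum_{k}\|w^{k}\|_{L^{4}}^{2}\bigr)^{1/2}$. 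Substituting (2.6) componentwise gives $\sum_{j}\|u^{j}\|_{L^{4}}^{2}\le 2^{1/2}\sum_{j}|u^{j}|\,\|u^{j}\|\le 2^{1/2}|\mathbf{u}|\,\|\mathbf{u}\|$ after one more Cauchy--Schwarz, and analogously for $\mathbf{w}$. Multiplying the two contributions produces precisely the constant $2^{1/2}$ required by (2.7).

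For (2.8), fix $t$ and use the antisymmetry (2.5) (which applies because $\mathbf{u}(t)\in V$) to rewrite $\langle\widetilde{B}(\mathbf{u}(t)),\mathbf{w}\rangle=b(\mathbf{u}(t),\mathbf{u}(t),\mathbf{w})=-b(\mathbf{u}(t),\mathbf{w},\mathbf{u}(t))$. Applying (2.7) to the right-hand side, with $\mathbf{u}(t)$ in the outer slots and $\mathbf{w}$ in the middle, produces the pointwise estimate $\|\widetilde{B}(\mathbf{u}(t))\|_{V^{\prime}}\le 2^{1/2}|\mathbf{u}(t)|\,\|\mathbf{u}(t)\|$. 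Squaring and integrating over $(0,T)$, then pulling $|\mathbf{u}(t)|$ out in the $L^{\infty}(0,T;H)$ norm and leaving $\|\mathbf{u}(t)\|$ inside the $L^{2}(0,T;V)$ norm, yields (2.8) after a final square root. The only genuinely delicate step is the pointwise Ladyzhenskaya estimate (2.6); once this is in place, everything else is a systematic use of Hölder, Cauchy--Schwarz, and the skew-symmetry of $b$.
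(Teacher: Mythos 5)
Your argument is correct and is precisely the classical one: the paper itself gives no proof of Lemma \ref{lem2.1} but refers to Temam \cite[pp.291--293]{bib31}, and your derivation (the one-dimensional fundamental-theorem trick for the Ladyzhenskaya inequality, Hölder with exponents $(4,2,4)$ plus Cauchy--Schwarz over the indices for (\ref{eq2.7}), and the skew-symmetry (\ref{eq2.5}) followed by squaring and integrating in $t$ for (\ref{eq2.8})) is exactly that standard argument with the stated constants.
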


The proof of the above lemma can be found in \cite[pp.291-293]{bib31}.

The following regularity result is fundamental for the estimates of the
solution $\left( \mathbf{u}_{\varepsilon },p_{\varepsilon }\right) $ of (\ref%
{eq1.3})-(\ref{eq1.6}).

\begin{lemma}
\label{lem2.2} Suppose in (\ref{eq1.3})-(\ref{eq1.6}) that $N=2$ and 
\begin{equation}
\mathbf{f}\text{, }\mathbf{f}^{\prime }\in L^{2}\left( 0,T;V^{\prime
}\right) \text{ and }\mathbf{f}\left( 0\right) \in L^{2}\left( \Omega ;%
\mathbb{R}\right) ^{N}\text{.}  \label{eq2.9}
\end{equation}%
Then the solution $\mathbf{u}_{\varepsilon }$ verifies:%
\begin{equation}
\mathbf{u}_{\varepsilon }^{\prime }\in L^{2}\left( 0,T;V\right) \cap
L^{\infty }\left( 0,T;H\right) \text{.}  \label{eq2.10}
\end{equation}
\end{lemma}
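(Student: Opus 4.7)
The plan is to differentiate the equation formally in time, test against $\mathbf{u}_\varepsilon'$, and apply Gronwall's lemma. Since $\mathbf{u}_\varepsilon''$ has no a priori meaning, I will carry this out on a Galerkin approximation and then pass to the limit.

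First I would fix a smooth divergence-free basis $\{\mathbf{w}_k\}$ of $V$ (eigenfunctions of the Stokes operator, or at least a Schauder basis of $V$ contained in $H^2(\Omega)^2 \cap V$), and write $\mathbf{u}_\varepsilon^m(t) = \sum_{k=1}^m g_{k,m}(t)\mathbf{w}_k$ solving
\begin{equation*}
(\mathbf{u}_\varepsilon^{m\prime},\mathbf{w}_k) + a^\varepsilon(\mathbf{u}_\varepsilon^m,\mathbf{w}_k) + b(\mathbf{u}_\varepsilon^m,\mathbf{u}_\varepsilon^m,\mathbf{w}_k) = \langle \mathbf{f},\mathbf{w}_k\rangle,\quad \mathbf{u}_\varepsilon^m(0)=0.
\end{equation*}
This is a smooth ODE system, so the coefficients $g_{k,m}$ are in fact $\mathcal{C}^1$, and from $\mathbf{f}'\in L^2(0,T;V')$ one verifies by differentiating the ODE that the $g_{k,m}$ are $H^2$ in time, so $\mathbf{u}_\varepsilon^{m\prime\prime}$ makes sense in $L^2(0,T;V_m')$.

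Next I would evaluate the ODE at $t=0$. Since $\mathbf{u}_\varepsilon^m(0)=0$ the nonlinear and elliptic terms vanish, leaving $(\mathbf{u}_\varepsilon^{m\prime}(0),\mathbf{w}_k)=\langle \mathbf{f}(0),\mathbf{w}_k\rangle$ for $k\le m$. Hence $\mathbf{u}_\varepsilon^{m\prime}(0)$ is the $L^2$-projection onto $V_m$ of $\mathbf{f}(0)$, giving the uniform bound $|\mathbf{u}_\varepsilon^{m\prime}(0)|\le |\mathbf{f}(0)|_{L^2(\Omega)^2}$. This is the key use of hypothesis (\ref{eq2.9}).

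Then I would differentiate the ODE in $t$, multiply by $g_{k,m}'$ and sum over $k\le m$, producing, with $\mathbf{w}=\mathbf{u}_\varepsilon^{m\prime}$,
\begin{equation*}
\tfrac{1}{2}\tfrac{d}{dt}|\mathbf{w}|^2 + a^\varepsilon(\mathbf{w},\mathbf{w}) + b(\mathbf{w},\mathbf{u}_\varepsilon^m,\mathbf{w}) + b(\mathbf{u}_\varepsilon^m,\mathbf{w},\mathbf{w}) = \langle \mathbf{f}',\mathbf{w}\rangle .
\end{equation*}
The last trilinear term vanishes by (\ref{eq2.4}). For the remaining trilinear term I apply the 2D estimate (\ref{eq2.7}) to get $|b(\mathbf{w},\mathbf{u}_\varepsilon^m,\mathbf{w})|\le 2^{1/2}|\mathbf{w}|\,\|\mathbf{w}\|\,\|\mathbf{u}_\varepsilon^m\|$, and then Young's inequality splits this into $\tfrac{\alpha}{4}\|\mathbf{w}\|^2 + \tfrac{2}{\alpha}|\mathbf{w}|^2\|\mathbf{u}_\varepsilon^m\|^2$. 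Dealing with $\langle \mathbf{f}',\mathbf{w}\rangle$ similarly, and using (\ref{eq2.1}), I arrive at
\begin{equation*}
\tfrac{d}{dt}|\mathbf{w}|^2 + \alpha\|\mathbf{w}\|^2 \le C\bigl(1+\|\mathbf{u}_\varepsilon^m\|^2\bigr)|\mathbf{w}|^2 + C\|\mathbf{f}'\|_{V'}^2 .
\end{equation*}

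Finally, since the standard energy estimate for (\ref{eq1.3})--(\ref{eq1.6}) already provides a uniform-in-$m$ bound on $\|\mathbf{u}_\varepsilon^m\|_{L^2(0,T;V)}$, the coefficient $\|\mathbf{u}_\varepsilon^m\|^2$ is $L^1(0,T)$ uniformly in $m$. Gronwall's lemma, combined with the bound on $|\mathbf{u}_\varepsilon^{m\prime}(0)|$ obtained above, therefore yields uniform bounds on $\mathbf{u}_\varepsilon^{m\prime}$ in $L^\infty(0,T;H)\cap L^2(0,T;V)$. Passing to the limit $m\to\infty$ and identifying the weak limit with $\mathbf{u}_\varepsilon'$ by uniqueness gives (\ref{eq2.10}). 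The main obstacle I foresee is the rigorous justification of $\mathbf{u}_\varepsilon^{m\prime}(0)$ as an element of $H$ with a uniform $L^2$-bound; this is where the special structure $\mathbf{u}_\varepsilon(0)=0$ together with $\mathbf{f}(0)\in L^2(\Omega)^2$ is essential, since without these one would be forced to inspect $P^\varepsilon\mathbf{u}_\varepsilon(0)$ which has no $L^2$-meaning for the limit problem.
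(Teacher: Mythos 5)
Your proposal is correct and follows essentially the same route as the paper, which omits the proof and refers to the argument of \cite[p.299, Theorem 3.5]{bib31}: Galerkin approximation, evaluation of the equation at $t=0$ (where $\mathbf{u}_{\varepsilon}(0)=0$ kills the elliptic and nonlinear terms and $\mathbf{f}\left( 0\right) \in L^{2}\left( \Omega ;\mathbb{R}\right) ^{N}$ gives the uniform bound on $\mathbf{u}_{\varepsilon }^{m\prime }(0)$ in $H$), time differentiation of the approximate equations, the two-dimensional estimate (\ref{eq2.7}) combined with (\ref{eq2.4}), and Gronwall. The only point to phrase with a little care is the regularity of the Galerkin coefficients: since $\mathbf{f}\in L^{2}\left( 0,T;V^{\prime }\right)$ with $\mathbf{f}^{\prime }\in L^{2}\left( 0,T;V^{\prime }\right)$ one has $\mathbf{f}\in \mathcal{C}\left( \left[ 0,T\right] ;V^{\prime }\right)$, which is exactly what is needed both to make sense of $\mathbf{u}_{\varepsilon }^{m\prime }(0)$ and to differentiate the ODE system, as you indicate.
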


The proof of the above lemma follows by the same line of argument as in the
proof of \cite[p.299, Theorem 3.5]{bib31}. So we omit it. We are now able to
prove the result on the estimates.

\begin{proposition}
\label{pr2.1} Under the hypotheses of Lemma \ref{lem2.2}, there exists a
constant $c>0$ (independent of $\varepsilon $) such that the pair $\left( 
\mathbf{u}_{\varepsilon },p_{\varepsilon }\right) $ solution of (\ref{eq1.3}%
)-(\ref{eq1.6}) in $\mathcal{W}\left( 0,T\right) \times L^{2}\left(
0,T;L^{2}\left( \Omega ;\mathbb{R}\right) \mathfrak{/}\mathbb{R}\right) $
satisfies:%
\begin{equation}
\left\Vert \mathbf{u}_{\varepsilon }\right\Vert _{\mathcal{W}\left(
0,T\right) }\leq c  \label{eq2.11}
\end{equation}%
\begin{equation}
\left\Vert \frac{\partial \mathbf{u}_{\varepsilon }}{\partial t}\right\Vert
_{L^{2}\left( 0,T;H^{-1}\left( \Omega \right) ^{N}\right) }\leq c
\label{eq2.12}
\end{equation}%
and 
\begin{equation}
\left\Vert p_{\varepsilon }\right\Vert _{L^{2}\left( 0,T;L^{2}\left( \Omega
\right) \right) }\leq c\text{.}  \label{eq2.13}
\end{equation}
\end{proposition}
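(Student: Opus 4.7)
The plan is to derive the three estimates in sequence: (\ref{eq2.11}) from a standard energy identity, (\ref{eq2.12}) by upgrading the regularity of $\partial_t\mathbf{u}_\varepsilon$ via Lemma~\ref{lem2.2}, and (\ref{eq2.13}) via de Rham's theorem combined with an $H^{-1}$-bound on the convective term that exploits the divergence-free constraint.

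For (\ref{eq2.11}), I would test the weak form of (\ref{eq1.3}) against $\mathbf{v}=\mathbf{u}_\varepsilon(t)\in V$; this is legitimate because $\mathbf{u}_\varepsilon\in L^2(0,T;V)$ with $\mathbf{u}_\varepsilon'\in L^2(0,T;V')$, so the classical energy identity holds. The pressure term disappears since $\mathbf{u}_\varepsilon\in V$, and $b(\mathbf{u}_\varepsilon,\mathbf{u}_\varepsilon,\mathbf{u}_\varepsilon)=0$ by (\ref{eq2.4}). Coercivity (\ref{eq2.1}), Young's inequality on $\langle\mathbf{f},\mathbf{u}_\varepsilon\rangle$, integration in $t$ and Gronwall produce an $\varepsilon$-independent bound of $\mathbf{u}_\varepsilon$ in $L^\infty(0,T;H)\cap L^2(0,T;V)$. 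Rearranging (\ref{eq1.3}) in $V'$ as $\mathbf{u}_\varepsilon'=\mathbf{f}-P^\varepsilon\mathbf{u}_\varepsilon-\widetilde B(\mathbf{u}_\varepsilon)$ and bounding each term by (\ref{eq2.2}), (\ref{eq2.8}) and the data controls $\mathbf{u}_\varepsilon'$ in $L^2(0,T;V')$, completing (\ref{eq2.11}).

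For (\ref{eq2.12}), I would appeal to Lemma~\ref{lem2.2}. The underlying argument (cf.\ [bib31, Theorem~3.5]) is to differentiate (\ref{eq1.3}) in $t$ and test against $\mathbf{u}_\varepsilon'$: the term $b(\mathbf{u}_\varepsilon,\mathbf{u}_\varepsilon',\mathbf{u}_\varepsilon')$ vanishes by (\ref{eq2.4}), while $b(\mathbf{u}_\varepsilon',\mathbf{u}_\varepsilon,\mathbf{u}_\varepsilon')$ is handled via (\ref{eq2.7}) and absorbed, yielding after Gronwall $\varepsilon$-uniform bounds on $\mathbf{u}_\varepsilon'$ in $L^2(0,T;V)\cap L^\infty(0,T;H)$; the initial acceleration is controlled because $\mathbf{u}_\varepsilon(0)=0$ kills $P^\varepsilon\mathbf{u}_\varepsilon$ and $\widetilde B(\mathbf{u}_\varepsilon)$ at $t=0$, so Leray projection gives $|\mathbf{u}_\varepsilon'(0)|\leq|\mathbf{f}(0)|$. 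In particular $\partial_t\mathbf{u}_\varepsilon$ is bounded in $L^2(0,T;L^2(\Omega)^N)$, which embeds continuously in $L^2(0,T;H^{-1}(\Omega)^N)$, giving (\ref{eq2.12}).

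Finally for (\ref{eq2.13}), I would read (\ref{eq1.3}) as $\mathbf{grad}\,p_\varepsilon=\mathbf{f}-\partial_t\mathbf{u}_\varepsilon-P^\varepsilon\mathbf{u}_\varepsilon-\widetilde B(\mathbf{u}_\varepsilon)$ and invoke the de Rham/Ne\v{c}as inequality $\|q\|_{L^2(\Omega)/\mathbb{R}}\leq C\|\mathbf{grad}\,q\|_{H^{-1}(\Omega)^N}$. The linear terms on the right are bounded in $L^2(0,T;H^{-1}(\Omega)^N)$ by the hypothesis on $\mathbf{f}$, estimate (\ref{eq2.2}), and (\ref{eq2.12}). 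The main obstacle is the convective term: using $\mathrm{div}\,\mathbf{u}_\varepsilon=0$, I would rewrite $(\widetilde B(\mathbf{u}_\varepsilon))^k=\sum_j\partial_j(u_\varepsilon^j u_\varepsilon^k)$, then apply the 2D Ladyzhenskaya interpolation (\ref{eq2.6}) to get $\mathbf{u}_\varepsilon\in L^4(0,T;L^4(\Omega)^N)$ with norm controlled by $\|\mathbf{u}_\varepsilon\|_{L^\infty(0,T;H)}^{1/2}\|\mathbf{u}_\varepsilon\|_{L^2(0,T;V)}^{1/2}$, so that $\mathbf{u}_\varepsilon\otimes\mathbf{u}_\varepsilon\in L^2(0,T;L^2)$ and its divergence lies in $L^2(0,T;H^{-1})$ with an $\varepsilon$-uniform bound. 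The hardest part is precisely assembling these $H^{-1}$-controls: without both the sharp regularity of the acceleration obtained via Lemma~\ref{lem2.2} and the divergence-free rewriting of the nonlinearity, the de Rham step would only yield an $\varepsilon$-dependent pressure bound.
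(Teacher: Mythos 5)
Your proposal is correct and follows essentially the same route as the paper: the energy identity with $\mathbf{v}=\mathbf{u}_{\varepsilon }(t)$ for (\ref{eq2.11}), differentiation of the weak formulation in time together with the bound $\left\vert \mathbf{u}_{\varepsilon }^{\prime }(0)\right\vert \leq \left\vert \mathbf{f}(0)\right\vert$ and a Gronwall argument for (\ref{eq2.12}), and recovery of the pressure from the momentum equation for (\ref{eq2.13}). The only difference is presentational and occurs in the last step: you invoke the Ne\v{c}as inequality $\left\Vert q\right\Vert _{L^{2}(\Omega )/\mathbb{R}}\leq C\left\Vert \mathbf{grad}\,q\right\Vert _{H^{-1}(\Omega )^{N}}$ and control the nonlinearity by writing it in divergence form and applying Ladyzhenskaya's inequality (\ref{eq2.6}), whereas the paper uses the dual formulation of the very same inf--sup estimate --- testing against a field $\mathbf{v}_{\varepsilon }(t)$ with $div\,\mathbf{v}_{\varepsilon }(t)=p_{\varepsilon }(t)$ and $\left\Vert \mathbf{v}_{\varepsilon }(t)\right\Vert _{H_{0}^{1}(\Omega )^{N}}\leq c_{1}\left\Vert p_{\varepsilon }(t)\right\Vert _{L^{2}(\Omega )}$ --- and bounds $\widetilde{B}(\mathbf{u}_{\varepsilon })$ in $H^{-1}$ through (\ref{eq2.7}) and the antisymmetry (\ref{eq2.5}), which yields the identical $\left\vert \mathbf{u}\right\vert \left\Vert \mathbf{u}\right\Vert$ control.
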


\begin{proof}
Let $\left( \mathbf{u}_{\varepsilon },p_{\varepsilon }\right) $ be the
solution of (\ref{eq1.3})-(\ref{eq1.6}). We have%
\begin{equation}
\left( \mathbf{u}_{\varepsilon }^{\prime }\left( t\right) ,\mathbf{v}\right)
+a^{\varepsilon }\left( \mathbf{u}_{\varepsilon }\left( t\right) ,\mathbf{v}%
\right) +b\left( \mathbf{u}_{\varepsilon }\left( t\right) ,\mathbf{u}%
_{\varepsilon }\left( t\right) ,\mathbf{v}\right) =\left( \mathbf{f}\left(
t\right) ,\mathbf{v}\right) \text{\quad }\left( \mathbf{v}\in V\right)
\label{eq2.14}
\end{equation}%
for almost all $t\in \left[ 0,T\right] $, where $\left( ,\right) $ denotes
the duality pairing between $V^{\prime }$ and $V$ as well as between $%
H^{-1}\left( \Omega ;\mathbb{R}\right) ^{N}$ and $H_{0}^{1}\left( \Omega ;%
\mathbb{R}\right) ^{N}$. By taking in particular $\mathbf{v}=\mathbf{u}%
_{\varepsilon }\left( t\right) $ in (\ref{eq2.14}), we have for almost all $%
t\in \left[ 0,T\right] $ 
\begin{equation*}
\frac{d}{dt}\left\vert \mathbf{u}_{\varepsilon }\left( t\right) \right\vert
^{2}+2\alpha \left\Vert \mathbf{u}_{\varepsilon }\left( t\right) \right\Vert
^{2}\leq \frac{1}{\alpha }\left\Vert \mathbf{f}\left( t\right) \right\Vert
_{V^{\prime }}^{2}+\alpha \left\Vert \mathbf{u}_{\varepsilon }\left(
t\right) \right\Vert ^{2}
\end{equation*}%
since $b\left( \mathbf{u}_{\varepsilon }\left( t\right) ,\mathbf{u}%
_{\varepsilon }\left( t\right) ,\mathbf{u}_{\varepsilon }\left( t\right)
\right) =0$ in view of (\ref{eq2.4}) ($\left\vert {\small \cdot }\right\vert 
$ and $\left\Vert {\small \cdot }\right\Vert $ are respectively the norms in 
$L^{2}\left( \Omega \right) ^{N}$ and $H_{0}^{1}\left( \Omega \right) ^{N}$%
). Hence, for every $s\in \left[ 0,T\right] $ 
\begin{equation*}
\left\vert \mathbf{u}_{\varepsilon }\left( s\right) \right\vert ^{2}+\alpha
\int_{0}^{s}\left\Vert \mathbf{u}_{\varepsilon }\left( t\right) \right\Vert
^{2}dt\leq \frac{1}{\alpha }\int_{0}^{T}\left\Vert \mathbf{f}\left( t\right)
\right\Vert _{V^{\prime }}^{2}dt
\end{equation*}%
and 
\begin{equation}
\left\vert \mathbf{u}_{\varepsilon }\left( s\right) \right\vert ^{2}\leq 
\frac{1}{\alpha }\int_{0}^{T}\left\Vert \mathbf{f}\left( t\right)
\right\Vert _{V^{\prime }}^{2}dt  \label{eq2.15}
\end{equation}%
since $\mathbf{u}_{\varepsilon }\left( 0\right) =0$. We have also%
\begin{equation}
\alpha \int_{0}^{T}\left\Vert \mathbf{u}_{\varepsilon }\left( t\right)
\right\Vert ^{2}dt\leq \frac{1}{\alpha }\int_{0}^{T}\left\Vert \mathbf{f}%
\left( t\right) \right\Vert _{V^{\prime }}^{2}dt\text{.}  \label{eq2.16}
\end{equation}%
On the other hand, the abstract parabolic problem for (\ref{eq1.3})-(\ref%
{eq1.6}) gives%
\begin{equation*}
\mathbf{u}_{\varepsilon }^{\prime }=\mathbf{f}-A_{\varepsilon }\mathbf{u}%
_{\varepsilon }-\widetilde{B}\left( \mathbf{u}_{\varepsilon }\right) \text{,}
\end{equation*}%
where $A_{\varepsilon }$ is the linear operator of $V$ into $V^{\prime }$
defined by 
\begin{equation*}
(A_{\varepsilon }\mathbf{u},\mathbf{v})=a^{\varepsilon }\left( \mathbf{u},%
\mathbf{v}\right) \qquad \left( \mathbf{u},\text{ }\mathbf{v}\in V\right) 
\text{.}
\end{equation*}%
Hence, in view of (\ref{eq2.2})%
\begin{equation}
\left\Vert \mathbf{u}_{\varepsilon }^{\prime }\right\Vert _{L^{2}\left(
0,T;V^{\prime }\right) }\leq \left\Vert \mathbf{f}\right\Vert _{L^{2}\left(
0,T;V^{\prime }\right) }+c_{0}\left\Vert \mathbf{u}_{\varepsilon
}\right\Vert _{L^{2}\left( 0,T;V\right) }+\left\Vert \widetilde{B}\left( 
\mathbf{u}_{\varepsilon }\right) \right\Vert _{L^{2}\left( 0,T;V^{\prime
}\right) }\text{,}  \label{eq2.17}
\end{equation}%
since $\widetilde{B}\left( \mathbf{u}_{\varepsilon }\right) \in L^{2}\left(
0,T;V^{\prime }\right) $ (see (\ref{eq2.8})). Thus, by (\ref{eq2.15})-(\ref%
{eq2.17}) and (\ref{eq2.8}) one quickly arrives at (\ref{eq2.11}). Let us
show (\ref{eq2.12}). By virtue of Lemma \ref{lem2.2} we have $\mathbf{u}%
_{\varepsilon }^{\prime }\in L^{2}\left( 0,T;V\right) \cap L^{\infty }\left(
0,T;H\right) $. On the other hand, we are allowed to differentiate (\ref%
{eq2.14}) in the distribution sense on $]0,T[$. We get 
\begin{equation*}
\frac{d}{dt}\left( \mathbf{u}_{\varepsilon }^{\prime },\mathbf{v}\right)
+a^{\varepsilon }\left( \mathbf{u}_{\varepsilon }^{\prime },\mathbf{v}%
\right) +b\left( \mathbf{u}_{\varepsilon }^{\prime },\mathbf{u}_{\varepsilon
},\mathbf{v}\right) +b\left( \mathbf{u}_{\varepsilon },\mathbf{u}%
_{\varepsilon }^{\prime },\mathbf{v}\right) =\left( \mathbf{f}^{\prime },%
\mathbf{v}\right) \text{\quad }\left( \mathbf{v}\in V\right) \text{,}
\end{equation*}%
i.e.,%
\begin{equation*}
\frac{d}{dt}\left( \mathbf{u}_{\varepsilon }^{\prime },\mathbf{v}\right)
=\left( \mathbf{f}^{\prime }-A_{\varepsilon }\mathbf{u}_{\varepsilon
}^{\prime }-B\left( \mathbf{u}_{\varepsilon }^{\prime },\mathbf{u}%
_{\varepsilon }\right) -B\left( \mathbf{u}_{\varepsilon },\mathbf{u}%
_{\varepsilon }^{\prime }\right) ,\mathbf{v}\right) \text{.}
\end{equation*}%
But the function $\mathbf{f}^{\prime }-A_{\varepsilon }\mathbf{u}%
_{\varepsilon }^{\prime }-B\left( \mathbf{u}_{\varepsilon }^{\prime },%
\mathbf{u}_{\varepsilon }\right) -B\left( \mathbf{u}_{\varepsilon },\mathbf{u%
}_{\varepsilon }^{\prime }\right) $ belongs to $L^{2}\left( 0,T;V^{\prime
}\right) $: indeed, $\mathbf{f}^{\prime }\in L^{2}\left( 0,T;V^{\prime
}\right) $ by hypothesis, $A_{\varepsilon }\mathbf{u}_{\varepsilon }^{\prime
}\in L^{2}\left( 0,T;V^{\prime }\right) $, and further we have%
\begin{equation*}
\int_{0}^{T}\left\Vert B\left( \mathbf{u}_{\varepsilon }^{\prime }\left(
t\right) ,\mathbf{u}_{\varepsilon }\left( t\right) \right) \right\Vert
_{V^{\prime }}^{2}dt\leq 2\left\Vert \mathbf{u}_{\varepsilon }^{\prime
}\right\Vert _{L^{\infty }\left( 0,T;H\right) }\left\Vert \mathbf{u}%
_{\varepsilon }\right\Vert _{L^{\infty }\left( 0,T;H\right)
}\int_{0}^{T}\left\Vert \mathbf{u}_{\varepsilon }^{\prime }\left( t\right)
\right\Vert \left\Vert \mathbf{u}_{\varepsilon }\left( t\right) \right\Vert
dt
\end{equation*}%
and%
\begin{equation*}
\int_{0}^{T}\left\Vert B\left( \mathbf{u}_{\varepsilon }\left( t\right) ,%
\mathbf{u}_{\varepsilon }^{\prime }\left( t\right) \right) \right\Vert
_{V^{\prime }}^{2}dt\leq 2\left\Vert \mathbf{u}_{\varepsilon }^{\prime
}\right\Vert _{L^{\infty }\left( 0,T;H\right) }\left\Vert \mathbf{u}%
_{\varepsilon }\right\Vert _{L^{\infty }\left( 0,T;H\right)
}\int_{0}^{T}\left\Vert \mathbf{u}_{\varepsilon }^{\prime }\left( t\right)
\right\Vert \left\Vert \mathbf{u}_{\varepsilon }\left( t\right) \right\Vert
dt
\end{equation*}%
by virtue of part (\ref{eq2.7}) of Lemma \ref{lem2.1}. Thus by \cite[p.250,
Lemma 1.1]{bib31}, $\mathbf{u}_{\varepsilon }^{\prime \prime }\in
L^{2}\left( 0,T;V^{\prime }\right) $ and 
\begin{equation}
\left( \mathbf{u}_{\varepsilon }^{\prime \prime },\mathbf{v}\right)
+a^{\varepsilon }\left( \mathbf{u}_{\varepsilon }^{\prime },\mathbf{v}%
\right) +b\left( \mathbf{u}_{\varepsilon }^{\prime },\mathbf{u}_{\varepsilon
},\mathbf{v}\right) +b\left( \mathbf{u}_{\varepsilon },\mathbf{u}%
_{\varepsilon }^{\prime },\mathbf{v}\right) =\left\langle \mathbf{f}^{\prime
},\mathbf{v}\right\rangle  \label{eq2.18}
\end{equation}%
for all $\mathbf{v}\in V$. Furthermore, $\mathbf{u}_{\varepsilon }^{\prime
}\in L^{2}\left( 0,T;V\right) $ and $\mathbf{u}_{\varepsilon }^{\prime
\prime }\in L^{2}\left( 0,T;V^{\prime }\right) $ imply that $\mathbf{u}%
_{\varepsilon }^{\prime }\in \mathcal{W}\left( 0,T\right) $. But $\mathcal{W}%
\left( 0,T\right) $ is continuously embedded in $\mathcal{C}\left( \left[ 0,T%
\right] ;H\right) $ (see \cite{bib31}), thus $\mathbf{u}_{\varepsilon
}^{\prime }\in \mathcal{C}\left( \left[ 0,T\right] ;H\right) $. Moreover,
replacing $\mathbf{v}$ by $\mathbf{u}_{\varepsilon }^{\prime }\left(
t\right) $ in (\ref{eq2.14}), we obtain%
\begin{equation*}
\left\vert \mathbf{u}_{\varepsilon }^{\prime }\left( t\right) \right\vert
^{2}+a^{\varepsilon }\left( \mathbf{u}_{\varepsilon }\left( t\right) ,%
\mathbf{u}_{\varepsilon }^{\prime }\left( t\right) \right) +b\left( \mathbf{u%
}_{\varepsilon }\left( t\right) ,\mathbf{u}_{\varepsilon }\left( t\right) ,%
\mathbf{u}_{\varepsilon }^{\prime }\left( t\right) \right) =\left( \mathbf{f}%
\left( t\right) ,\mathbf{u}_{\varepsilon }^{\prime }\left( t\right) \right) 
\text{.}
\end{equation*}%
As $\mathbf{u}_{\varepsilon }^{\prime }\in \mathcal{C}\left( 0,T;H\right) $,
one has in particular for $t=0$,%
\begin{equation*}
\left\vert \mathbf{u}_{\varepsilon }^{\prime }\left( 0\right) \right\vert
^{2}=\left( \mathbf{f}\left( 0\right) ,\mathbf{u}_{\varepsilon }^{\prime
}\left( 0\right) \right) -a^{\varepsilon }\left( \mathbf{u}_{\varepsilon
}\left( 0\right) ,\mathbf{u}_{\varepsilon }^{\prime }\left( 0\right) \right)
-b\left( \mathbf{u}_{\varepsilon }\left( 0\right) ,\mathbf{u}_{\varepsilon
}\left( 0\right) ,\mathbf{u}_{\varepsilon }^{\prime }\left( 0\right) \right)
\end{equation*}%
where $\left( ,\right) $ denotes also the scalar product in $H$. But $%
\mathbf{u}_{\varepsilon }\left( 0\right) =0$, thus by the preceding equality
we obtain%
\begin{equation}
\left\vert \mathbf{u}_{\varepsilon }^{\prime }\left( 0\right) \right\vert
\leq \left\vert \mathbf{f}\left( 0\right) \right\vert \text{.}
\label{eq2.19}
\end{equation}%
The inequality (\ref{eq2.19}) shows that $\mathbf{u}_{\varepsilon }^{\prime
}\left( 0\right) $ lies in a bounded subset of $H$.\ On\ the other hand, by
taking in particular $\mathbf{v}=\mathbf{u}_{\varepsilon }^{\prime }\left(
t\right) $ in (\ref{eq2.18}), we get 
\begin{equation}
\frac{d}{dt}\left\vert \mathbf{u}_{\varepsilon }^{\prime }\left( t\right)
\right\vert ^{2}+2\alpha \left\Vert \mathbf{u}_{\varepsilon }^{\prime
}\left( t\right) \right\Vert ^{2}+2b\left( \mathbf{u}_{\varepsilon }^{\prime
}\left( t\right) ,\mathbf{u}_{\varepsilon }\left( t\right) ,\mathbf{u}%
_{\varepsilon }^{\prime }\left( t\right) \right) \leq 2\left( \mathbf{f}%
^{\prime }\left( t\right) ,\mathbf{u}_{\varepsilon }^{\prime }\left(
t\right) \right) \text{,}  \label{eq2.20}
\end{equation}%
since $b\left( \mathbf{u}_{\varepsilon }\left( t\right) ,\mathbf{u}%
_{\varepsilon }^{\prime }\left( t\right) ,\mathbf{u}_{\varepsilon }^{\prime
}\left( t\right) \right) =0$. But, by virtue of Lemme \ref{lem2.1}%
\begin{equation*}
2\left\vert b\left( \mathbf{u}_{\varepsilon }^{\prime }\left( t\right) ,%
\mathbf{u}_{\varepsilon }\left( t\right) ,\mathbf{u}_{\varepsilon }^{\prime
}\left( t\right) \right) \right\vert \leq 2^{\frac{3}{2}}\left\vert \mathbf{u%
}_{\varepsilon }^{\prime }\left( t\right) \right\vert \left\Vert \mathbf{u}%
_{\varepsilon }^{\prime }\left( t\right) \right\Vert \left\Vert \mathbf{u}%
_{\varepsilon }\left( t\right) \right\Vert
\end{equation*}%
\begin{equation*}
\leq \alpha \left\Vert \mathbf{u}_{\varepsilon }^{\prime }\left( t\right)
\right\Vert ^{2}+\frac{2}{\alpha }\left\Vert \mathbf{u}_{\varepsilon }\left(
t\right) \right\Vert ^{2}\left\vert \mathbf{u}_{\varepsilon }^{\prime
}\left( t\right) \right\vert ^{2}\text{.}
\end{equation*}%
Hence, we deduce from (\ref{eq2.20}) that 
\begin{equation}
\frac{d}{dt}\left\vert \mathbf{u}_{\varepsilon }^{\prime }\left( t\right)
\right\vert ^{2}+\frac{\alpha }{2}\left\Vert \mathbf{u}_{\varepsilon
}^{\prime }\left( t\right) \right\Vert ^{2}\leq \frac{2}{\alpha }\left\Vert 
\mathbf{f}^{\prime }\left( t\right) \right\Vert _{V^{\prime }}^{2}+\frac{2}{%
\alpha }\left\Vert \mathbf{u}_{\varepsilon }\left( t\right) \right\Vert
^{2}\left\vert \mathbf{u}_{\varepsilon }^{\prime }\left( t\right)
\right\vert ^{2}\text{.}  \label{eq2.21}
\end{equation}%
By (\ref{eq2.21}) we have 
\begin{equation}
\frac{d}{dt}\left\vert \mathbf{u}_{\varepsilon }^{\prime }\left( t\right)
\right\vert ^{2}-\frac{2}{\alpha }\left\Vert \mathbf{u}_{\varepsilon }\left(
t\right) \right\Vert ^{2}\left\vert \mathbf{u}_{\varepsilon }^{\prime
}\left( t\right) \right\vert ^{2}\leq \frac{2}{\alpha }\left\Vert \mathbf{f}%
^{\prime }\left( t\right) \right\Vert _{V^{\prime }}^{2}\text{.}
\label{eq2.22}
\end{equation}%
As 
\begin{equation*}
\exp \left( -\int_{0}^{t}\frac{2}{\alpha }\left\Vert \mathbf{u}_{\varepsilon
}\left( s\right) \right\Vert ^{2}ds\right) \leq 1\text{,}
\end{equation*}%
multiplying (\ref{eq2.22}) by $\exp \left( -\int_{0}^{t}\frac{2}{\alpha }%
\left\Vert \mathbf{u}_{\varepsilon }\left( s\right) \right\Vert
^{2}ds\right) $ yields%
\begin{equation*}
\left( \frac{d}{dt}\left\vert \mathbf{u}_{\varepsilon }^{\prime }\left(
t\right) \right\vert ^{2}-\frac{2}{\alpha }\left\Vert \mathbf{u}%
_{\varepsilon }\left( t\right) \right\Vert ^{2}\left\vert \mathbf{u}%
_{\varepsilon }^{\prime }\left( t\right) \right\vert ^{2}\right) \exp \left(
-\int_{0}^{t}\frac{2}{\alpha }\left\Vert \mathbf{u}_{\varepsilon }\left(
s\right) \right\Vert ^{2}ds\right) \leq \frac{2}{\alpha }\left\Vert \mathbf{f%
}^{\prime }\left( t\right) \right\Vert _{V^{\prime }}^{2}\text{, }
\end{equation*}%
i.e., 
\begin{equation}
\frac{d}{dt}\left\{ \left\vert \mathbf{u}_{\varepsilon }^{\prime }\left(
t\right) \right\vert ^{2}\exp \left( -\int_{0}^{t}\frac{2}{\alpha }%
\left\Vert \mathbf{u}_{\varepsilon }\left( s\right) \right\Vert
^{2}ds\right) \right\} \leq \frac{2}{\alpha }\left\Vert \mathbf{f}^{\prime
}\left( t\right) \right\Vert _{V^{\prime }}^{2}\text{.}  \label{eq2.23}
\end{equation}%
Thus, integrating (\ref{eq2.23}) on $\left( 0,t\right) $, $t\in \left(
0,T\right) $, we have%
\begin{equation}
\left\vert \mathbf{u}_{\varepsilon }^{\prime }\left( t\right) \right\vert
^{2}\leq \left\{ \left\vert \mathbf{u}_{\varepsilon }^{\prime }\left(
0\right) \right\vert ^{2}+\frac{2}{\alpha }\int_{0}^{T}\left\Vert \mathbf{f}%
^{\prime }\left( t\right) \right\Vert _{V^{\prime }}^{2}dt\right\} \exp
\left( \int_{0}^{T}\frac{2}{\alpha }\left\Vert \mathbf{u}_{\varepsilon
}\left( s\right) \right\Vert ^{2}ds\right) \text{ }  \label{eq2.24}
\end{equation}%
for all $t\in \left( 0,T\right) $. It follows from (\ref{eq2.16}), (\ref%
{eq2.19}) and (\ref{eq2.24}) that the sequence $\left( \mathbf{u}%
_{\varepsilon }^{\prime }\right) _{\varepsilon >0}$ is bounded in $%
L^{2}\left( 0,T;L^{2}\left( \Omega \right) ^{N}\right) $. Hence, the
sequence $\left( \frac{\partial \mathbf{u}_{\varepsilon }}{\partial t}%
\right) _{\varepsilon >0}$ is bounded in $L^{2}\left( 0,T;H^{-1}\left(
\Omega \right) ^{N}\right) $ and (\ref{eq2.12}) is proved. Further, by part (%
\ref{eq2.7}) of Lemma \ref{lem2.1} we have 
\begin{equation*}
\left\vert b\left( \mathbf{u},\mathbf{v},\mathbf{w}\right) \right\vert \leq
2^{\frac{1}{2}}\left\vert \mathbf{u}\right\vert ^{\frac{1}{2}}\left\Vert 
\mathbf{u}\right\Vert ^{\frac{1}{2}}\left\Vert \mathbf{v}\right\Vert
\left\vert \mathbf{w}\right\vert ^{\frac{1}{2}}\left\Vert \mathbf{w}%
\right\Vert ^{\frac{1}{2}}\text{\quad }\left( \mathbf{u},\mathbf{v},\mathbf{w%
}\in H_{0}^{1}\left( \Omega ;\mathbb{R}\right) ^{N}\right) \text{.}
\end{equation*}%
Thus, if $\mathbf{u}\in V$ then $b\left( \mathbf{u},\mathbf{v},\mathbf{w}%
\right) =-b\left( \mathbf{u},\mathbf{w},\mathbf{v}\right) $ and 
\begin{equation*}
\left\vert b\left( \mathbf{u},\mathbf{v},\mathbf{w}\right) \right\vert \leq
2^{\frac{1}{2}}\left\vert \mathbf{u}\right\vert ^{\frac{1}{2}}\left\Vert 
\mathbf{u}\right\Vert ^{\frac{1}{2}}\left\Vert \mathbf{w}\right\Vert
\left\vert \mathbf{v}\right\vert ^{\frac{1}{2}}\left\Vert \mathbf{v}%
\right\Vert ^{\frac{1}{2}}\text{\quad }\left( \mathbf{v}\text{, }\mathbf{w}%
\in H_{0}^{1}\left( \Omega ;\mathbb{R}\right) ^{N}\right) \text{.}
\end{equation*}%
In particular, 
\begin{equation*}
\left\vert b\left( \mathbf{u},\mathbf{u},\mathbf{v}\right) \right\vert \leq
2^{\frac{1}{2}}\left\vert \mathbf{u}\right\vert \left\Vert \mathbf{u}%
\right\Vert \left\Vert \mathbf{v}\right\Vert \text{ for }\mathbf{u}\in V%
\text{ and }\mathbf{v}\in H_{0}^{1}\left( \Omega ;\mathbb{R}\right) ^{N}%
\text{,}
\end{equation*}%
thus%
\begin{equation*}
\left\Vert \widetilde{B}\left( \mathbf{u}\right) \right\Vert _{H^{-1}\left(
\Omega \right) ^{N}}\leq 2^{\frac{1}{2}}\left\vert \mathbf{u}\right\vert
\left\Vert \mathbf{u}\right\Vert \text{\qquad }\left( \mathbf{u}\in V\right) 
\text{.}
\end{equation*}%
It follows from the preceding inequality that $\widetilde{B}\left( \mathbf{u}%
_{\varepsilon }\right) \in L^{2}\left( 0,T;H^{-1}\left( \Omega \right)
^{N}\right) $ and 
\begin{equation}
\int_{0}^{T}\left\Vert \widetilde{B}\left( \mathbf{u}_{\varepsilon }\right)
\right\Vert _{H^{-1}\left( \Omega \right) ^{N}}^{2}dt\leq 2\left\Vert 
\mathbf{u}_{\varepsilon }\right\Vert _{L^{\infty }\left( 0,T;H\right)
}^{2}\left\Vert \mathbf{u}_{\varepsilon }\right\Vert _{L^{2}\left(
0,T;V\right) }^{2}\text{.}  \label{eq2.25}
\end{equation}%
On the other hand, $p_{\varepsilon }\left( t\right) \in L^{2}\left( \Omega ;%
\mathbb{R}\right) \mathfrak{/}\mathbb{R}$ for almost all $t\in \left(
0,T\right) $. Consequently, by virtue of \cite[p. 30]{bib30} there exists
some $\mathbf{v}_{\varepsilon }\left( t\right) \in $ $H_{0}^{1}\left( \Omega
;\mathbb{R}\right) ^{N}$ such that%
\begin{equation}
div\mathbf{v}_{\varepsilon }\left( t\right) =p_{\varepsilon }\left( t\right)
\label{eq2.26}
\end{equation}%
\begin{equation}
\left\Vert \mathbf{v}_{\varepsilon }\left( t\right) \right\Vert
_{H_{0}^{1}\left( \Omega \right) ^{N}}\leq c_{1}\left\Vert p_{\varepsilon
}\left( t\right) \right\Vert _{L^{2}\left( \Omega \right) }\text{,}
\label{eq2.27}
\end{equation}%
where the constant $c_{1}$ depends solely on $\Omega $. Multiplying (\ref%
{eq1.3}) by $\mathbf{v}_{\varepsilon }\left( t\right) $, we have for almost
all $t\in \left( 0,T\right) $ 
\begin{equation}
\left( \mathbf{u}_{\varepsilon }^{\prime },\mathbf{v}_{\varepsilon }\left(
t\right) \right) +a^{\varepsilon }\left( \mathbf{u}_{\varepsilon }\left(
t\right) ,\mathbf{v}_{\varepsilon }\left( t\right) \right) +b\left( \mathbf{u%
}_{\varepsilon }\left( t\right) ,\mathbf{u}_{\varepsilon }\left( t\right) ,%
\mathbf{v}_{\varepsilon }\left( t\right) \right) -\int_{\Omega
}p_{\varepsilon }\left( t\right) div\mathbf{v}_{\varepsilon }\left( t\right)
dx=\left( \mathbf{f}\left( t\right) ,\mathbf{v}_{\varepsilon }\left(
t\right) \right) \text{.}  \label{eq2.28}
\end{equation}%
Integrating (\ref{eq2.28}) on $\left( 0,T\right) $, and using (\ref{eq2.25}%
)-(\ref{eq2.27}) yield 
\begin{equation}
\left\Vert p_{\varepsilon }\right\Vert _{L^{2}\left( Q\right) }^{2}\leq
c_{1}c\left\Vert \mathbf{u}_{\varepsilon }^{\prime }\right\Vert
_{L^{2}\left( 0,T;L^{2}\left( \Omega \right) ^{N}\right) }\left\Vert
p_{\varepsilon }\right\Vert _{L^{2}\left( Q\right) }+\sqrt{2}c_{1}\left\Vert 
\mathbf{u}_{\varepsilon }\right\Vert _{L^{\infty }\left( 0,T;H\right)
}\left\Vert \mathbf{u}_{\varepsilon }\right\Vert _{L^{2}\left( 0,T;V\right)
}\left\Vert p_{\varepsilon }\right\Vert _{L^{2}\left( Q\right) }
\label{eq2.29}
\end{equation}%
\begin{equation*}
+c_{1}\left\Vert \mathbf{f}\right\Vert _{L^{2}\left( 0,T;H^{-1}\left( \Omega
\right) \right) }\left\Vert p_{\varepsilon }\right\Vert _{L^{2}\left(
Q\right) }+c_{0}c_{1}\left\Vert \mathbf{u}_{\varepsilon }\right\Vert
_{L^{2}\left( 0,T;V\right) }\left\Vert p_{\varepsilon }\right\Vert
_{L^{2}\left( Q\right) }\text{,}
\end{equation*}%
where $c$ is the constant in the Poincar\'{e} inequality, $c_{0}$ and $c_{1}$
are the constants in (\ref{eq2.2}) and (\ref{eq2.27})\ respectively. It
follows from (\ref{eq2.29}) that%
\begin{equation}
\left\Vert p_{\varepsilon }\right\Vert _{L^{2}\left( Q\right) }\leq
c_{1}c\left\Vert \mathbf{u}_{\varepsilon }^{\prime }\right\Vert
_{L^{2}\left( 0,T;L^{2}\left( \Omega \right) ^{N}\right) }+\sqrt{2}%
c_{1}\left\Vert \mathbf{u}_{\varepsilon }\right\Vert _{L^{\infty }\left(
0,T;H\right) }\left\Vert \mathbf{u}_{\varepsilon }\right\Vert _{L^{2}\left(
0,T;V\right) }  \label{eq2.30}
\end{equation}%
\begin{equation*}
+c_{1}\left\Vert \mathbf{f}\right\Vert _{L^{2}\left( 0,T;H^{-1}\left( \Omega
\right) \right) }+c_{0}c_{1}\left\Vert \mathbf{u}_{\varepsilon }\right\Vert
_{L^{2}\left( 0,T;V\right) }\text{.}
\end{equation*}%
Using (\ref{eq2.11}), (\ref{eq2.12}) and (\ref{eq2.16}) already proved, one
quickly arrives a (\ref{eq2.13}) by (\ref{eq2.30}). The proof of the
proposition is complete.
\end{proof}

\subsection{A convergence result for (\protect\ref{eq1.3})-(\protect\ref%
{eq1.6})}

We set $Y=\left( -\frac{1}{2},\frac{1}{2}\right) ^{N}$, $Y$ considered as a
subset of $\mathbb{R}_{y}^{N}$ (the space $\mathbb{R}^{N}$ of variables $%
y=\left( y_{1},...,y_{N}\right) $). We set also $Z=\left( -\frac{1}{2},\frac{%
1}{2}\right) $, $Z$ considered as a subset of $\mathbb{R}_{\tau }$ (the
space $\mathbb{R}$ of variables $\tau $). Our purpose is to study the
homogenization of (\ref{eq1.3})-(\ref{eq1.6}) under the periodicity
hypothesis on $a_{ij}$.

\subsubsection{\textbf{Preliminaries}}

Let us first recall that a function $u\in L_{loc}^{1}\left( \mathbb{R}%
_{y}^{N}\times \mathbb{R}_{\tau }\right) $ is said to be $Y\times Z$%
-periodic if for each $\left( k,l\right) \in \mathbb{Z}^{N}\times \mathbb{Z}$
($\mathbb{Z}$ denotes the integers), we have $u\left( y+k,\tau +l\right)
=u\left( y,\tau \right) $ almost everywhere (a.e.) in $\left( y,\tau \right)
\in \mathbb{R}^{N}\times \mathbb{R}$. If in addition $u$ is continuous, then
the preceding equality holds for every $\left( y,\tau \right) \in \mathbb{R}%
^{N}\times \mathbb{R}$. The space of all $Y\times Z$-periodic continuous
complex functions on $\mathbb{R}_{y}^{N}\times \mathbb{R}_{\tau }$ is
denoted by $\mathcal{C}_{per}\left( Y\times Z\right) $; that of all $Y\times
Z$-periodic functions in $L_{loc}^{p}\left( \mathbb{R}_{y}^{N}\times \mathbb{%
R}_{\tau }\right) $ $\left( 1\leq p<\infty \right) $ is denoted by $%
L_{per}^{p}\left( Y\times Z\right) $. $\mathcal{C}_{per}\left( Y\times
Z\right) $ is a Banach space under the supremum norm on $\mathbb{R}%
^{N}\times \mathbb{R}$, whereas $L_{per}^{p}\left( Y\times Z\right) $ is a
Banach space under the norm 
\begin{equation*}
\left\Vert u\right\Vert _{L^{p}\left( Y\times Z\right) }=\left(
\int_{Z}\int_{Y}\left\vert u\left( y,\tau \right) \right\vert ^{p}dyd\tau
\right) ^{\frac{1}{p}}\text{\qquad }\left( u\in L_{per}^{p}\left( Y\times
Z\right) \right) \text{.}
\end{equation*}

We will need the space $H_{\#}^{1}\left( Y\right) $ of $Y$-periodic
functions $u\in H_{loc}^{1}\left( \mathbb{R}_{y}^{N}\right)
=W_{loc}^{1,2}\left( \mathbb{R}_{y}^{N}\right) $ such that $\int_{Y}u\left(
y\right) dy=0$. Provided with the gradient norm, 
\begin{equation*}
\left\Vert u\right\Vert _{H_{\#}^{1}\left( Y\right) }=\left(
\int_{Y}\left\vert \nabla _{y}u\right\vert ^{2}dy\right) ^{\frac{1}{2}}\text{%
\qquad }\left( u\in H_{\#}^{1}\left( Y\right) \right) \text{,}
\end{equation*}%
where $\nabla _{y}u=\left( \frac{\partial u}{\partial y_{1}},...,\frac{%
\partial u}{\partial y_{N}}\right) $, $H_{\#}^{1}\left( Y\right) $ is a
Hilbert space. We will also need the space $L_{per}^{2}\left(
Z;H_{\#}^{1}\left( Y\right) \right) $ with the norm%
\begin{equation*}
\left\Vert u\right\Vert _{L_{per}^{2}\left( Z;H_{\#}^{1}\left( Y\right)
\right) }=\left( \int_{Z}\int_{Y}\left\vert \nabla _{y}u\left( y,\tau
\right) \right\vert ^{2}dyd\tau \right) ^{\frac{1}{2}}\text{ }\left( u\in
L_{per}^{2}\left( Z;H_{\#}^{1}\left( Y\right) \right) \right)
\end{equation*}%
which is a Hilbert space.

Before we can recall the concept of $\Sigma $-convergence, let us introduce
one further notation. The letter $E$ throughout will denote a family of real
numbers $0<\varepsilon <1$ admitting $0$ as an accumulation point. For
example, $E$ may be the whole interval $\left( 0,1\right) $; $E$ may also be
an ordinary sequence $\left( \varepsilon _{n}\right) _{n\in \mathbb{N}}$
with $0<\varepsilon _{n}<1$ and $\varepsilon _{n}\rightarrow 0$ as $%
n\rightarrow \infty $. In the latter case $E$ will be referred to as a 
\textit{fundamental sequence}.

Let $\Omega $ be a bounded open set in $\mathbb{R}_{x}^{N}$ and $Q=\Omega
\times ]0,T[$ with $T\in \mathbb{R}_{+}^{\ast }$, and let $1\leq p<\infty $.

\begin{definition}
\label{def2.1} A sequence $\left( u_{\varepsilon }\right) _{\varepsilon \in
E}\subset L^{p}\left( Q\right) $ is said to:

(i) weakly $\Sigma $-converge in $L^{p}\left( Q\right) $ to some $u_{0}\in
L^{p}\left( Q;L_{per}^{p}\left( Y\times Z\right) \right) $ if as

\noindent $E\ni \varepsilon \rightarrow 0$, 
\begin{equation}
\int_{Q}u_{\varepsilon }\left( x,t\right) \psi ^{\varepsilon }\left(
x,t\right) dxdt\rightarrow \int \int \int_{Q\times Y\times Z}u_{0}\left(
x,t,y,\tau \right) \psi \left( x,t,y,\tau \right) dxdtdyd\tau  \label{eq2.31}
\end{equation}%
\begin{equation*}
\begin{array}{c}
\text{for all }\psi \in L^{p^{\prime }}\left( Q;\mathcal{C}_{per}\left(
Y\times Z\right) \right) \text{ }\left( \frac{1}{p^{\prime }}=1-\frac{1}{p}%
\right) \text{, where }\psi ^{\varepsilon }\left( x,t\right) = \\ 
\psi \left( x,t,\frac{x}{\varepsilon },\frac{t}{\varepsilon }\right) \text{ }%
\left( \left( x,t\right) \in Q\right) \text{;}%
\end{array}%
\end{equation*}

(ii) strongly $\Sigma $-converge in $L^{p}\left( Q\right) $ to some $%
u_{0}\in L^{p}\left( Q;L_{per}^{p}\left( Y\times Z\right) \right) $ if the
following property is verified: 
\begin{equation}
\left\{ 
\begin{array}{c}
\text{Given }\eta >0\text{ and }v\in L^{p}\left( Q;\mathcal{C}_{per}\left(
Y\times Z\right) \right) \text{ with} \\ 
\left\Vert u_{0}-v\right\Vert _{L^{p}\left( Q\times Y\times Z\right) }\leq 
\frac{\eta }{2}\text{, there is some }\alpha >0\text{ such} \\ 
\text{that }\left\Vert u_{\varepsilon }-v^{\varepsilon }\right\Vert
_{L^{p}\left( Q\right) }\leq \eta \text{ provided }E\ni \varepsilon \leq
\alpha \text{.}%
\end{array}%
\right.  \label{eq2.32}
\end{equation}
\end{definition}

We will briefly express weak and strong two-scale convergence by writing $%
u_{\varepsilon }\rightarrow u_{0}$ in $L^{p}\left( Q\right) $-weak $\Sigma $
and $u_{\varepsilon }\rightarrow u_{0}$ in $L^{p}\left( Q\right) $-strong $%
\Sigma $, respectively.

\begin{remark}
\label{rem2.1} It is of interest to know that if $u_{\varepsilon
}\rightarrow u_{0}$ in $L^{p}\left( Q\right) $-weak $\Sigma $, then (\ref%
{eq2.31}) holds for $\psi \in \mathcal{C}\left( \overline{Q};L_{per}^{\infty
}\left( Y\times Z\right) \right) $. See \cite[Proposition 10]{bib16} for the
proof.
\end{remark}

Instead of repeating here the main results underlying two-scale convergence
or $\Sigma $-convergence theory for periodic structures, we find it more
convenient to draw the reader's attention to a few references, see, e.g., 
\cite{bib1}, \cite{bib11}, \cite{bib16} and \cite{bib32}.

However, we recall below two fundamental results. First of all, let 
\begin{equation*}
\mathcal{Y}\left( 0,T\right) =\left\{ v\in L^{2}\left( 0,T;H_{0}^{1}\left(
\Omega ;\mathbb{R}\right) \right) :v^{\prime }\in L^{2}\left(
0,T;H^{-1}\left( \Omega ;\mathbb{R}\right) \right) \right\} \text{.}
\end{equation*}%
$\mathcal{Y}\left( 0,T\right) $ is provided with the norm 
\begin{equation*}
\left\Vert v\right\Vert _{\mathcal{Y}\left( 0,T\right) }=\left( \left\Vert
v\right\Vert _{L^{2}\left( 0,T;H_{0}^{1}\left( \Omega \right) \right)
}^{2}+\left\Vert v^{\prime }\right\Vert _{L^{2}\left( 0,T;H^{-1}\left(
\Omega \right) \right) }^{2}\right) ^{\frac{1}{2}}\qquad \left( v\in 
\mathcal{Y}\left( 0,T\right) \right)
\end{equation*}%
which makes it a Hilbert space.

\begin{theorem}
\label{th2.1} Assume that $1<p<\infty $ and further $E$ is a fundamental
sequence. Let a sequence $\left( u_{\varepsilon }\right) _{\varepsilon \in
E} $ be bounded in $L^{p}\left( Q\right) $. Then, a subsequence $E^{\prime }$
can be extracted from $E$ such that $\left( u_{\varepsilon }\right)
_{\varepsilon \in E^{\prime }}$ weakly $\Sigma $-converges in $L^{p}\left(
Q\right) $.
\end{theorem}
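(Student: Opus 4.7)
My plan is to view the quantities $\int_Q u_\varepsilon \psi^\varepsilon \, dx\, dt$ as values of linear functionals on the space $F = L^{p'}(Q;\mathcal{C}_{per}(Y\times Z))$, extract a weak-$\ast$ convergent subsequence by Banach--Alaoglu, and then identify the limit with integration against an $L^p$-function via a density argument. This mirrors the classical Nguetseng--Allaire two-scale compactness proof, adapted to the extra time variable $\tau$ and the product cell $Y\times Z$.

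For each $\varepsilon\in E$, define $L_\varepsilon\in F'$ by
\begin{equation*}
L_\varepsilon(\psi)=\int_Q u_\varepsilon(x,t)\,\psi\!\left(x,t,\tfrac{x}{\varepsilon},\tfrac{t}{\varepsilon}\right)dx\,dt,\qquad \psi\in F.
\end{equation*}
The first step is to establish a uniform bound $\|L_\varepsilon\|_{F'}\leq C$. By H\"older's inequality
$|L_\varepsilon(\psi)|\leq \|u_\varepsilon\|_{L^p(Q)}\|\psi^\varepsilon\|_{L^{p'}(Q)}$, and the pointwise estimate $|\psi^\varepsilon(x,t)|\leq \|\psi(x,t,\cdot,\cdot)\|_{\mathcal{C}_{per}(Y\times Z)}$ yields $\|\psi^\varepsilon\|_{L^{p'}(Q)}\leq \|\psi\|_F$. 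Combined with the hypothesis that $(u_\varepsilon)$ is bounded in $L^p(Q)$, this gives $\|L_\varepsilon\|_{F'}\leq C$.

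Since $Q$ is $\sigma$-finite, $p'<\infty$ and $\mathcal{C}_{per}(Y\times Z)$ is separable (as continuous functions on the compact torus), the space $F$ is separable. By the sequential Banach--Alaoglu theorem there exist a subsequence $E'\subset E$ and $L\in F'$ with $L_\varepsilon(\psi)\to L(\psi)$ for every $\psi\in F$. The critical step is then to show that $L$ comes from an element of $L^p(Q;L^p_{per}(Y\times Z))$. For this I would invoke the periodic mean value property: for $\psi\in F$,
\begin{equation*}
\|\psi^\varepsilon\|_{L^{p'}(Q)}\ \xrightarrow[\varepsilon\to 0]{}\ \|\psi\|_{L^{p'}(Q\times Y\times Z)}.
\end{equation*}
Passing to the limit in $|L_\varepsilon(\psi)|\leq \|u_\varepsilon\|_{L^p(Q)}\|\psi^\varepsilon\|_{L^{p'}(Q)}$ yields $|L(\psi)|\leq C\|\psi\|_{L^{p'}(Q\times Y\times Z)}$. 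Since $F$ is dense in $L^{p'}(Q\times Y\times Z)$, $L$ extends uniquely to a continuous functional on $L^{p'}(Q\times Y\times Z)$; by Riesz representation there exists $u_0\in L^p(Q\times Y\times Z)=L^p(Q;L^p_{per}(Y\times Z))$ such that $L(\psi)=\int\!\int\!\int_{Q\times Y\times Z} u_0\,\psi\,dx\,dt\,dy\,d\tau$, which is exactly the weak $\Sigma$-convergence condition (\ref{eq2.31}).

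The main technical obstacle is the mean value property, i.e.\ justifying the limit $\int_Q|\psi^\varepsilon|^{p'}dx\,dt \to \int\!\int\!\int_{Q\times Y\times Z}|\psi|^{p'}$ for $\psi\in F$. I would prove it first for $\psi$ of the form $\varphi(x,t)\,w(y,\tau)$ with $\varphi\in\mathcal{C}(\overline Q)$ and $w\in \mathcal{C}_{per}(Y\times Z)$ (using the standard Riemann--Lebesgue-type argument for periodic functions, partitioning $Q$ into $\varepsilon$-cells) and then extend by linearity and a density/approximation argument to all of $F$, using the uniform bound $\|\psi^\varepsilon\|_{L^{p'}(Q)}\leq\|\psi\|_F$ to control the error. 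All other steps—separability, Banach--Alaoglu extraction, and the Riesz identification—are standard once this periodic averaging lemma is in hand.
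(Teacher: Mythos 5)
Your argument is correct and is precisely the standard Nguetseng--Allaire compactness proof (uniform bound on the functionals $L_{\varepsilon}$, separability of $L^{p^{\prime }}\left( Q;\mathcal{C}_{per}\left( Y\times Z\right) \right) $, sequential Banach--Alaoglu, and identification of the limit through the mean value property and Riesz representation), which is exactly the route taken in the references \cite{bib1}, \cite{bib11} to which the paper defers for this theorem. The only phrasing to tighten is the ``extend by linearity'' step: since $\psi \mapsto \left\vert \psi \right\vert ^{p^{\prime }}$ is nonlinear, the linearity and density arguments should be applied to the averaging functional $g\mapsto \int_{Q}g^{\varepsilon }dxdt$ acting on $g=\left\vert \psi \right\vert ^{p^{\prime }}\in L^{1}\left( Q;\mathcal{C}_{per}\left( Y\times Z\right) \right) $, not to $\psi $ itself.
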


\begin{theorem}
\label{th2.2} Let $E$ be a fundamental sequence. Suppose a sequence $\left(
u_{\varepsilon }\right) _{\varepsilon \in E}$ is bounded in $\mathcal{Y}%
\left( 0,T\right) $. Then, a subsequence $E^{\prime }$ can be extracted from 
$E$ such that, as $E^{\prime }\ni \varepsilon \rightarrow 0$, 
\begin{equation*}
u_{\varepsilon }\rightarrow u_{0}\text{ in }\mathcal{Y}\left( 0,T\right) 
\text{-weak,\qquad \qquad \qquad \qquad \qquad \qquad }
\end{equation*}%
\begin{equation*}
u_{\varepsilon }\rightarrow u_{0}\text{ in }L^{2}\left( Q\right) \text{-weak 
}\Sigma \text{,\qquad \qquad \qquad \qquad \qquad }\quad
\end{equation*}%
\begin{equation*}
\frac{\partial u_{\varepsilon }}{\partial x_{j}}\rightarrow \frac{\partial
u_{0}}{\partial x_{j}}+\frac{\partial u_{1}}{\partial y_{j}}\text{ in }%
L^{2}\left( Q\right) \text{-weak }\Sigma \text{ }\left( 1\leq j\leq N\right) 
\text{,}
\end{equation*}%
where $u_{0}\in \mathcal{Y}\left( 0,T\right) $, $u_{1}\in L^{2}\left(
Q;L_{per}^{2}\left( Z;H_{\#}^{1}\left( Y\right) \right) \right) $.
\end{theorem}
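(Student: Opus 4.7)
The plan is to combine Aubin--Lions type compactness with the two-scale compactness of Theorem~\ref{th2.1}, and to close by a de Rham-type identification of the corrector. Since $H_{0}^{1}(\Omega)$ embeds compactly in $L^{2}(\Omega)$ and $L^{2}(\Omega)$ embeds continuously in $H^{-1}(\Omega)$, the Aubin--Lions lemma yields that $\mathcal{Y}(0,T)$ embeds compactly in $L^{2}(Q)$. From the bounded sequence $(u_{\varepsilon})_{\varepsilon\in E}$ I first extract a subsequence $E'\subset E$ with $u_{\varepsilon}\to u_{0}$ weakly in $\mathcal{Y}(0,T)$ and strongly in $L^{2}(Q)$, for some $u_{0}\in \mathcal{Y}(0,T)$. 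Since each of the sequences $(u_{\varepsilon})$ and $(\partial u_{\varepsilon}/\partial x_{j})$ ($1\leq j\leq N$) is bounded in $L^{2}(Q)$, $N+1$ successive applications of Theorem~\ref{th2.1} together with a diagonal extraction furnish a further subsequence (still denoted $E'$) along which
\begin{equation*}
u_{\varepsilon}\to\widetilde{u}_{0}\quad\text{and}\quad\frac{\partial u_{\varepsilon}}{\partial x_{j}}\to v_{j}\quad\text{in }L^{2}(Q)\text{-weak }\Sigma
\end{equation*}
for some $\widetilde{u}_{0},v_{j}\in L^{2}(Q;L_{per}^{2}(Y\times Z))$.

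The next step is to show $\widetilde{u}_{0}=u_{0}$, so the $\Sigma$-limit of $u_{\varepsilon}$ itself has no $(y,\tau)$-dependence. For every test $\psi\in L^{2}(Q;\mathcal{C}_{per}(Y\times Z))$, the classical mean-value property for periodic oscillations implies $\psi^{\varepsilon}\rightharpoonup \overline{\psi}(x,t):=\int_{Z}\int_{Y}\psi(x,t,y,\tau)\,dyd\tau$ weakly in $L^{2}(Q)$. Coupling this weak convergence with the strong $L^{2}(Q)$ convergence $u_{\varepsilon}\to u_{0}$ produces $\int_{Q}u_{\varepsilon}\psi^{\varepsilon}\,dxdt\to\int_{Q}u_{0}\overline{\psi}\,dxdt=\int\int\int_{Q\times Y\times Z}u_{0}\psi$. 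Comparing with the defining identity of the $\Sigma$-limit forces $\widetilde{u}_{0}=u_{0}$.

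To identify the $\Sigma$-limit $v_{j}$ of $\partial u_{\varepsilon}/\partial x_{j}$, I would select an admissible test $\Phi=(\Phi_{j})\in\mathcal{D}(Q)\otimes\mathcal{C}_{per}^{\infty}(Y\times Z)^{N}$ satisfying $\sum_{j}\partial\Phi_{j}/\partial y_{j}=0$. The condition $\mathrm{div}_{y}\Phi=0$ gives the pointwise identity $\mathrm{div}_{x}(\Phi^{\varepsilon})=(\mathrm{div}_{x}\Phi)^{\varepsilon}$, hence
\begin{equation*}
\sum_{j=1}^{N}\int_{Q}\frac{\partial u_{\varepsilon}}{\partial x_{j}}\Phi_{j}^{\varepsilon}\,dxdt=-\int_{Q}u_{\varepsilon}(\mathrm{div}_{x}\Phi)^{\varepsilon}\,dxdt.
\end{equation*}
Passing to the limit $\varepsilon\to 0$ with the two previously established $\Sigma$-convergences, and performing an $x$-integration by parts on the right-hand side (legitimate since $u_{0}\in L^{2}(0,T;H_{0}^{1}(\Omega))$), one reaches
\begin{equation*}
\sum_{j=1}^{N}\int\int\int_{Q\times Y\times Z}\left(v_{j}-\frac{\partial u_{0}}{\partial x_{j}}\right)\Phi_{j}\,dxdtdyd\tau=0
\end{equation*}
for every such $\Phi$. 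A periodic, parameter-dependent de Rham argument then delivers a unique $u_{1}\in L^{2}(Q;L_{per}^{2}(Z;H_{\#}^{1}(Y)))$ with $v_{j}-\partial u_{0}/\partial x_{j}=\partial u_{1}/\partial y_{j}$, which is the desired form.

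The main obstacle I foresee is precisely this last identification: one must invoke the periodic de Rham lemma pointwise in the parameters $(x,t,\tau)$, in such a way that the resulting potential $u_{1}$ depends $L^{2}$-measurably on $(x,t,\tau)$ and is normalized by zero $y$-mean so as to belong to $L^{2}(Q;L_{per}^{2}(Z;H_{\#}^{1}(Y)))$ exactly; the remaining steps are routine consequences of Theorem~\ref{th2.1} together with Aubin--Lions compactness.
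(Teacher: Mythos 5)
Your argument is correct, and it is essentially the standard compactness proof for this kind of statement. Note that the paper itself does not prove Theorem~\ref{th2.2}: it only cites \cite{bib16} and \cite{bib24} for the proof, so there is no in-paper argument to compare against; what you have written is the classical Nguetseng--Allaire route adapted to the parabolic scaling $(x/\varepsilon,t/\varepsilon)$. Two points are worth making explicit. First, your use of Aubin--Lions is doing real work here: the strong $L^{2}(Q)$ convergence is what lets you conclude that the weak $\Sigma$-limit of $u_{\varepsilon}$ is independent of \emph{both} oscillating variables $y$ and $\tau$ in one stroke (the more common argument handles $y$-independence via the gradient bound and $\tau$-independence separately via the bound on $u_{\varepsilon}'$, so your version is cleaner). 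Second, on the de Rham step that you flag as the main obstacle: the orthogonality relation you derive is consistent, since testing with constant (hence $y$-divergence-free) fields only reproduces the identity $\int\!\!\int_{Y\times Z} v_{j}\,dy\,d\tau=\partial u_{0}/\partial x_{j}$, which you already know from comparing the weak $L^{2}(Q)$ limit with the mean of the $\Sigma$-limit. To avoid a genuine measurable-selection argument, you can replace the pointwise de Rham lemma by a global functional-analytic one: the subspace $\left\{(\partial u_{1}/\partial y_{j})_{j} : u_{1}\in L^{2}\left(Q;L_{per}^{2}\left(Z;H_{\#}^{1}(Y)\right)\right)\right\}$ of $L^{2}(Q\times Y\times Z)^{N}$ is closed (by the Poincar\'{e}--Wirtinger inequality on $Y$ and the zero-mean normalization), and its orthogonal complement contains exactly the fields orthogonal to which you have tested; equivalently, define $u_{1}(x,t,\cdot,\tau)$ as the Lax--Milgram solution of the cell problem $\int_{Y}\nabla_{y}u_{1}\cdot\nabla_{y}w\,dy=\sum_{j}\int_{Y}(v_{j}-\partial u_{0}/\partial x_{j})\,\partial w/\partial y_{j}\,dy$, which yields measurability and the $L^{2}$ bound automatically. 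With that step made precise, your proof is complete.
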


The proof of Theorem \ref{th2.1} can be found in, e.g., \cite{bib1}, \cite%
{bib11}, whereas Theorem \ref{th2.2} has its proof in, e.g., \cite{bib16}
and \cite{bib24}.

\subsubsection{A global homogenization theorem}

Before we can establish a so-called global homogenization theorem for (\ref%
{eq1.3})-(\ref{eq1.6}), we require a few basic notation and results. To
begin, let 
\begin{equation*}
\mathcal{V}_{Y}=\left\{ \mathbf{\psi }\in \mathcal{C}_{per}^{\infty }\left(
Y;\mathbb{R}\right) ^{N}:\int_{Y}\mathbf{\psi }\left( y\right) dy=0,\text{ }%
div_{y}\mathbf{\psi =}0\right\} \text{, }
\end{equation*}%
\begin{equation*}
V_{Y}=\left\{ \mathbf{w}\in H_{\#}^{1}\left( Y;\mathbb{R}\right) ^{N}:div_{y}%
\mathbf{w=}0\right\} \text{, }
\end{equation*}%
where: $\mathcal{C}_{per}^{\infty }\left( Y;\mathbb{R}\right) =\mathcal{C}%
^{\infty }\left( \mathbb{R}^{N};\mathbb{R}\right) \cap \mathcal{C}%
_{per}\left( Y\right) $, $div_{y}$ denotes the divergence operator in $%
\mathbb{R}_{y}^{N}$. We provide $V_{Y}$ with the $H_{\#}^{1}\left( Y\right)
^{N}$-norm, which makes it a Hilbert space. There is no difficulty in
verifying that $\mathcal{V}_{Y}$ is dense in $V_{Y}$ (proceed as in \cite[%
Proposition 3.2]{bib21}). With this in mind, set 
\begin{equation*}
\mathbb{F}_{0}^{1}=L^{2}\left( 0,T;V\right) \times L^{2}\left(
Q;L_{per}^{2}\left( Z;V_{Y}\right) \right) \text{.}
\end{equation*}%
This is a Hilbert space with norm 
\begin{equation*}
\left\Vert \mathbf{v}\right\Vert _{\mathbb{F}_{0}^{1}}=\left( \left\Vert 
\mathbf{v}_{0}\right\Vert _{L^{2}\left( 0,T;V\right) }^{2}+\left\Vert 
\mathbf{v}_{1}\right\Vert _{L^{2}\left( Q;L_{per}^{2}\left( Z;V_{Y}\right)
\right) }^{2}\right) ^{\frac{1}{2}}\text{, }\mathbf{v=}\left( \mathbf{v}_{0},%
\mathbf{v}_{1}\right) \in \mathbb{F}_{0}^{1}\text{.}
\end{equation*}%
On the other hand, put 
\begin{equation*}
\mathbf{\tciFourier }_{0}^{\infty }=\mathcal{D}\left( 0,T;\mathcal{V}\right)
\times \left[ \mathcal{D}\left( Q;\mathbb{R}\right) \otimes \left[ \mathcal{C%
}_{per}^{\infty }\left( Z;\mathbb{R}\right) \otimes \mathcal{V}_{Y}\right] %
\right] \text{,}
\end{equation*}%
where $\mathcal{C}_{per}^{\infty }\left( Z;\mathbb{R}\right) =\mathcal{C}%
^{\infty }\left( \mathbb{R};\mathbb{R}\right) \cap \mathcal{C}_{per}\left(
Z\right) $, $\mathcal{C}_{per}^{\infty }\left( Z;\mathbb{R}\right) \otimes 
\mathcal{V}_{Y}$ stands for the space of vector functions $\mathbf{w}$ on $%
\mathbb{R}_{y}^{N}\times \mathbb{R}_{\tau }$ of the form%
\begin{equation*}
\mathbf{w}\left( y,\tau \right) =\sum_{finite}\chi _{i}\left( \tau \right) 
\mathbf{v}_{i}\left( y\right) \text{ \ }\left( \tau \in \mathbb{R},\text{ }%
y\in \mathbb{R}^{N}\right)
\end{equation*}%
with $\chi _{i}\in \mathcal{C}_{per}^{\infty }\left( Z;\mathbb{R}\right) $, $%
\mathbf{v}_{i}\in \mathcal{V}_{Y}$, and where $\mathcal{D}\left( Q;\mathbb{R}%
\right) \otimes \left( \mathcal{C}_{per}^{\infty }\left( Z;\mathbb{R}\right)
\otimes \mathcal{V}_{Y}\right) $ is the space of vector functions on $%
Q\times \mathbb{R}_{y}^{N}\times \mathbb{R}$ of the form 
\begin{equation*}
\mathbf{\psi }\left( x,t,y,\tau \right) =\sum_{finite}\varphi _{i}\left(
x,t\right) \mathbf{w}_{i}\left( y,\tau \right) \text{ }\left( \left(
x,t\right) \in Q,\text{ }\left( y,\tau \right) \in \mathbb{R}^{N}\times 
\mathbb{R}\right)
\end{equation*}%
with $\varphi _{i}\in \mathcal{D}\left( Q;\mathbb{R}\right) $, $\mathbf{w}%
_{i}\in \mathcal{C}_{per}^{\infty }\left( Z;\mathbb{R}\right) \otimes 
\mathcal{V}_{Y}$. Since $\mathcal{V}$ is dense in $V$ (see \cite[p.18]{bib31}%
), it is clear that $\mathbf{\tciFourier }_{0}^{\infty }$ is dense in $%
\mathbb{F}_{0}^{1}$.

Now, let 
\begin{equation*}
\mathbb{U}=V\times L^{2}\left( \Omega ;L_{per}^{2}\left( Z;V_{Y}\right)
\right) \text{.}
\end{equation*}%
Provided with the norm%
\begin{equation*}
\left\Vert \mathbf{v}\right\Vert _{\mathbb{U}}=\left( \left\Vert \mathbf{v}%
_{0}\right\Vert ^{2}+\left\Vert \mathbf{v}_{1}\right\Vert _{L^{2}\left(
\Omega ;L_{per}^{2}\left( Z;V_{Y}\right) \right) }^{2}\right) ^{\frac{1}{2}%
}\qquad \left( \mathbf{v}=\left( \mathbf{v}_{0},\mathbf{v}_{1}\right) \in 
\mathbb{U}\right) \text{,}
\end{equation*}%
$\mathbb{U}$ is a Hilbert space. Let us set%
\begin{equation*}
\widehat{a}_{\Omega }\left( \mathbf{u},\mathbf{v}\right)
=\sum_{i,j,k=1}^{N}\int \int \int_{\Omega \times Y\times Z}a_{ij}\left( 
\frac{\partial u_{0}^{k}}{\partial x_{j}}+\frac{\partial u_{1}^{k}}{\partial
y_{j}}\right) \left( \frac{\partial v_{0}^{k}}{\partial x_{i}}+\frac{%
\partial v_{1}^{k}}{\partial y_{i}}\right) dxdyd\tau
\end{equation*}%
for $\mathbf{u=}\left( \mathbf{u}_{0},\mathbf{u}_{1}\right) $ and $\mathbf{v=%
}\left( \mathbf{v}_{0},\mathbf{v}_{1}\right) $ in $\mathbb{U}$. This defines
a symmetric continuous bilinear form $\widehat{a}_{\Omega }$ on $\mathbb{U}%
\times \mathbb{U}$. Furthermore, $\widehat{a}_{\Omega }$ is $\mathbb{U}$%
-elliptic. Specifically, 
\begin{equation}
\widehat{a}_{\Omega }\left( \mathbf{u},\mathbf{u}\right) \geq \alpha
\left\Vert \mathbf{u}\right\Vert _{\mathbb{U}}^{2}\text{ }\left( \mathbf{u}%
\in \mathbb{U}\right)  \label{eq2.33}
\end{equation}%
as is easily checked by using (\ref{eq1.2}) and the fact that $\int_{Y}\frac{%
\partial u_{1}^{k}}{\partial y_{j}}\left( x,y,\tau \right) dy=0$.

Here is one fundamental lemma.

\begin{lemma}
\label{lem2.3} Suppose $N=2$. Suppose also that there exists a function $%
\mathbf{u}=\left( \mathbf{u}_{0},\mathbf{u}_{1}\right) \in \mathbb{F}%
_{0}^{1} $ verifying 
\begin{equation}
\mathbf{u}_{0}\in \mathcal{W}\left( 0,T\right) \text{ with }\mathbf{u}%
_{0}\left( 0\right) =0\text{,}  \label{eq2.34}
\end{equation}%
and the variational equation%
\begin{equation}
\left\{ 
\begin{array}{c}
\int_{0}^{T}\left( \mathbf{u}_{0}^{\prime }\left( t\right) ,\mathbf{v}%
_{0}\left( t\right) \right) dt+\int_{0}^{T}\widehat{a}_{\Omega }\left( 
\mathbf{u}\left( t\right) ,\mathbf{v}\left( t\right) \right)
dt+\int_{0}^{T}b\left( \mathbf{u}_{0}\left( t\right) ,\mathbf{u}_{0}\left(
t\right) ,\mathbf{v}_{0}\left( t\right) \right) dt \\ 
=\int_{0}^{T}\left( \mathbf{f}\left( t\right) ,\mathbf{v}_{0}\left( t\right)
\right) dt\text{ for all }\mathbf{v}=\left( \mathbf{v}_{0},\mathbf{v}%
_{1}\right) \in \mathbb{F}_{0}^{1}\text{.}%
\end{array}%
\right.  \label{eq2.35}
\end{equation}%
Then $\mathbf{u}$ is unique.
\end{lemma}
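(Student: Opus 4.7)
The plan is to prove uniqueness by the standard subtraction--energy--Gronwall scheme, relying crucially on the two--dimensional trilinear estimate (\ref{eq2.7}) from Lemma \ref{lem2.1}, on the antisymmetry property (\ref{eq2.4}) of $b$, and on the $\mathbb{U}$-ellipticity (\ref{eq2.33}) of $\widehat{a}_\Omega$.

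First, suppose $\mathbf{u}=(\mathbf{u}_0,\mathbf{u}_1)$ and $\mathbf{u}^{\ast}=(\mathbf{u}_0^{\ast},\mathbf{u}_1^{\ast})$ are two solutions of (\ref{eq2.34})--(\ref{eq2.35}), and set $\mathbf{w}=\mathbf{u}-\mathbf{u}^{\ast}$ with components $\mathbf{w}_0$, $\mathbf{w}_1$. Writing the difference of the two variational equations and using the identity $b(\mathbf{u}_0,\mathbf{u}_0,\mathbf{v}_0)-b(\mathbf{u}_0^{\ast},\mathbf{u}_0^{\ast},\mathbf{v}_0)=b(\mathbf{w}_0,\mathbf{u}_0,\mathbf{v}_0)+b(\mathbf{u}_0^{\ast},\mathbf{w}_0,\mathbf{v}_0)$, I would test with $\mathbf{v}(s)=\mathbf{w}(s)\chi_{[0,t]}(s)$ for arbitrary $t\in(0,T]$; this is a legitimate element of $\mathbb{F}_0^1$ since $\mathbf{w}\in\mathbb{F}_0^1$ and $\mathbb{F}_0^1$ only requires $L^2$-integrability in time. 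The antisymmetry relation (\ref{eq2.4}) yields $b(\mathbf{u}_0^{\ast}(s),\mathbf{w}_0(s),\mathbf{w}_0(s))=0$ (note $\mathbf{u}_0^{\ast}(s)\in V$ for a.e.\ $s$).

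Second, since $\mathbf{w}_0\in\mathcal{W}(0,T)$ with $\mathbf{w}_0(0)=0$, the standard chain rule in $\mathcal{W}(0,T)$ gives
\begin{equation*}
\int_0^t\left(\mathbf{w}_0^{\prime}(s),\mathbf{w}_0(s)\right)ds=\tfrac{1}{2}\left\vert\mathbf{w}_0(t)\right\vert^{2}.
\end{equation*}
Combining this with the $\mathbb{U}$-ellipticity (\ref{eq2.33}) I would obtain
\begin{equation*}
\tfrac{1}{2}\left\vert\mathbf{w}_0(t)\right\vert^{2}+\alpha\int_0^t\|\mathbf{w}(s)\|_{\mathbb{U}}^{2}\,ds\leq \int_0^t\left\vert b(\mathbf{w}_0(s),\mathbf{u}_0(s),\mathbf{w}_0(s))\right\vert ds.
\end{equation*}
The key step is now to estimate the trilinear term sharply: since $N=2$, inequality (\ref{eq2.7}) gives $|b(\mathbf{w}_0,\mathbf{u}_0,\mathbf{w}_0)|\leq 2^{1/2}|\mathbf{w}_0|\,\|\mathbf{w}_0\|\,\|\mathbf{u}_0\|$, and Young's inequality splits this as $\tfrac{\alpha}{2}\|\mathbf{w}_0\|^{2}+\tfrac{1}{\alpha}|\mathbf{w}_0|^{2}\|\mathbf{u}_0\|^{2}$. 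Because $\|\mathbf{w}_0\|^{2}\leq\|\mathbf{w}\|_{\mathbb{U}}^{2}$, this absorbs into the left-hand side leaving
\begin{equation*}
\left\vert\mathbf{w}_0(t)\right\vert^{2}+\alpha\int_0^t\|\mathbf{w}(s)\|_{\mathbb{U}}^{2}\,ds\leq \tfrac{2}{\alpha}\int_0^t|\mathbf{w}_0(s)|^{2}\|\mathbf{u}_0(s)\|^{2}ds.
\end{equation*}

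Finally, since $\|\mathbf{u}_0(\cdot)\|^{2}\in L^{1}(0,T)$, Gronwall's lemma applied to $|\mathbf{w}_0(t)|^{2}$ with initial value zero forces $\mathbf{w}_0\equiv 0$ on $[0,T]$, and feeding this back into the previous energy inequality yields $\int_0^T\|\mathbf{w}\|_{\mathbb{U}}^{2}\,ds=0$, hence $\mathbf{w}_1=0$ as well. I expect the only real obstacle to be the sharp absorption of the nonlinear term: in dimension three the $L^{4}$-interpolation (\ref{eq2.6}) would not suffice and a different growth of the trilinear form would spoil the Gronwall argument; the hypothesis $N=2$ in (\ref{eq2.34}) is what makes the bookkeeping in the Young/Gronwall step work cleanly.
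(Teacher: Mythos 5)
Your proof is correct and follows essentially the same route as the paper: subtract the two solutions, test with the difference, kill one trilinear term via (\ref{eq2.4}), estimate the surviving term $b(\mathbf{w}_0,\cdot,\mathbf{w}_0)$ with the two-dimensional bound (\ref{eq2.7}) and Young's inequality, absorb into the coercivity (\ref{eq2.33}), and conclude by Gronwall, recovering $\mathbf{w}_1=0$ from the leftover $\mathbb{U}$-norm term. The only differences from the paper's argument are cosmetic: the paper first localizes in time with $\varphi\otimes\mathbf{v}_{\ast}$ to get a pointwise-in-$t$ equation and runs the differential form of Gronwall with an explicit exponential integrating factor, and its decomposition of the nonlinear difference places the other solution in the middle slot of $b$.
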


\begin{proof}
Let $\mathbf{v}_{\ast }=\left( \mathbf{v}_{0},\mathbf{v}_{1}\right) \in 
\mathbb{U}$ and $\varphi \in \mathcal{D}\left( ]0,T[\right) $. By taking $%
\mathbf{v=}\varphi \otimes \mathbf{v}_{\ast }$ in (\ref{eq2.35}), we arrive
at 
\begin{equation}
\left( \mathbf{u}_{0}^{\prime }\left( t\right) ,\mathbf{v}_{0}\right) +%
\widehat{a}_{\Omega }\left( \mathbf{u}\left( t\right) ,\mathbf{v}_{\ast
}\right) +b\left( \mathbf{u}_{0}\left( t\right) ,\mathbf{u}_{0}\left(
t\right) ,\mathbf{v}_{0}\right) =\left( \mathbf{f}\left( t\right) ,\mathbf{v}%
_{0}\right) \text{\qquad }\left( \mathbf{v}_{\ast }\in \mathbb{U}\right)
\label{eq2.36}
\end{equation}%
for almost all $t\in \left( 0,T\right) $. Suppose that $\mathbf{u}_{\ast }$
and\textbf{\ }$\mathbf{u}_{\ast \ast }$ are two solutions of (\ref{eq2.34})-(%
\ref{eq2.35}) with $\mathbf{u}_{\ast }=\left( \mathbf{u}_{\ast 0},\mathbf{u}%
_{\ast 1}\right) $ and $\mathbf{u}_{\ast \ast }=\left( \mathbf{u}_{\ast \ast
0},\mathbf{u}_{\ast \ast 1}\right) $. Let $\mathbf{u=u}_{\ast }-\mathbf{u}%
_{\ast \ast }=\left( \mathbf{u}_{0},\mathbf{u}_{1}\right) $ with $\mathbf{u}%
_{0}=\mathbf{u}_{\ast 0}-\mathbf{u}_{\ast \ast 0}$ and $\mathbf{u}_{1}=%
\mathbf{u}_{\ast 1}-\mathbf{u}_{\ast \ast 1}$. Let us show that $\mathbf{u=}%
0 $. Using (\ref{eq2.36}), we see that $\mathbf{u}$ verifies: 
\begin{equation}
\left( \mathbf{u}_{0}^{\prime }\left( t\right) ,\mathbf{v}_{0}\right) +%
\widehat{a}_{\Omega }\left( \mathbf{u}\left( t\right) ,\mathbf{v}_{\ast
}\right) +b\left( \mathbf{u}_{\ast 0}\left( t\right) ,\mathbf{u}_{0}\left(
t\right) ,\mathbf{v}_{0}\right) +b\left( \mathbf{u}_{0}\left( t\right) ,%
\mathbf{u}_{\ast \ast 0}\left( t\right) ,\mathbf{v}_{0}\right) =0
\label{eq2.37}
\end{equation}%
for all $\mathbf{v}_{\ast }\in \mathbb{U}$ and for almost all $t\in \left(
0,T\right) $. But, by virtue of \cite[p. 261]{bib31} 
\begin{equation}
\frac{d}{dt}\left\vert \mathbf{u}_{0}\left( t\right) \right\vert
^{2}=2\left( \mathbf{u}_{0}^{\prime }\left( t\right) ,\mathbf{u}_{0}\left(
t\right) \right) \text{\qquad }  \label{eq2.38}
\end{equation}%
for almost all $t\in \left( 0,T\right) $. Hence, taking $\mathbf{v}_{\ast }%
\mathbf{=u}\left( t\right) $ in (\ref{eq2.37}), we obtain by\ (\ref{eq2.4}),
(\ref{eq2.5}) and (\ref{eq2.33}) 
\begin{equation}
\frac{d}{dt}\left\vert \mathbf{u}_{0}\left( t\right) \right\vert
^{2}+2\alpha \left\Vert \mathbf{u}\left( t\right) \right\Vert _{\mathbb{U}%
}^{2}\leq -2b\left( \mathbf{u}_{0}\left( t\right) ,\mathbf{u}_{\ast \ast
0}\left( t\right) ,\mathbf{u}_{0}\left( t\right) \right) \text{\qquad }
\label{eq2.39}
\end{equation}%
for almost all $t\in \left( 0,T\right) $. Furthermore, using part (\ref%
{eq2.7}) of Lemma \ref{lem2.1} yields 
\begin{equation*}
2\left\vert b\left( \mathbf{u}_{0}\left( t\right) ,\mathbf{u}_{\ast \ast
0}\left( t\right) ,\mathbf{u}_{0}\left( t\right) \right) \right\vert \leq 2^{%
\frac{3}{2}}\left\vert \mathbf{u}_{0}\left( t\right) \right\vert \left\Vert 
\mathbf{u}_{0}\left( t\right) \right\Vert \left\Vert \mathbf{u}_{\ast \ast
0}\left( t\right) \right\Vert \text{.}
\end{equation*}%
Thus, by the inequality 
\begin{equation*}
2^{\frac{3}{2}}\left\vert \mathbf{u}_{0}\left( t\right) \right\vert
\left\Vert \mathbf{u}_{0}\left( t\right) \right\Vert \left\Vert \mathbf{u}%
_{\ast \ast 0}\left( t\right) \right\Vert \leq 2\alpha \left\Vert \mathbf{u}%
_{0}\left( t\right) \right\Vert ^{2}+\frac{1}{\alpha }\left\vert \mathbf{u}%
_{0}\left( t\right) \right\vert ^{2}\left\Vert \mathbf{u}_{\ast \ast
0}\left( t\right) \right\Vert ^{2}
\end{equation*}%
we have: 
\begin{equation*}
\frac{d}{dt}\left\vert \mathbf{u}_{0}\left( t\right) \right\vert
^{2}+2\alpha \left\Vert \mathbf{u}\left( t\right) \right\Vert _{\mathbb{U}%
}^{2}\leq 2\alpha \left\Vert \mathbf{u}\left( t\right) \right\Vert _{\mathbb{%
U}}^{2}+\frac{1}{\alpha }\left\vert \mathbf{u}_{0}\left( t\right)
\right\vert ^{2}\left\Vert \mathbf{u}_{\ast \ast 0}\left( t\right)
\right\Vert ^{2}
\end{equation*}%
for almost all $t\in \left( 0,T\right) $, and then 
\begin{equation*}
\frac{d}{dt}\left\vert \mathbf{u}_{0}\left( t\right) \right\vert ^{2}-\frac{1%
}{\alpha }\left\vert \mathbf{u}_{0}\left( t\right) \right\vert
^{2}\left\Vert \mathbf{u}_{\ast \ast 0}\left( t\right) \right\Vert ^{2}\leq 0
\end{equation*}%
for almost all $t\in \left( 0,T\right) $. Multiplying the preceding
inequality by

\noindent $\exp \left( -\frac{1}{\alpha }\int_{0}^{t}\left\Vert \mathbf{u}%
_{\ast \ast 0}\left( s\right) \right\Vert ^{2}ds\right) $, we obtain%
\begin{equation*}
\frac{d}{dt}\left[ \left\vert \mathbf{u}_{0}\left( t\right) \right\vert
^{2}\exp \left( -\frac{1}{\alpha }\int_{0}^{t}\left\Vert \mathbf{u}_{\ast
\ast 0}\left( s\right) \right\Vert ^{2}ds\right) \right] \leq 0
\end{equation*}%
for almost all $t\in \left( 0,T\right) $. As $\mathbf{u}_{0}\left( 0\right)
=0$, integrating on $\left[ 0,t\right] $, $\left( 0\leq t\leq T\right) $ the
preceding inequality yields: $\left\vert \mathbf{u}_{0}\left( t\right)
\right\vert ^{2}\leq 0$ for all $t\in \left( 0,T\right) $, thus $\mathbf{u}%
_{0}\left( t\right) =0$\quad $\left( t\in \left( 0,T\right) \right) $.
Finally, the inequality (\ref{eq2.39}) gives $\mathbf{u}=0$, \ and the lemma
follows.
\end{proof}

We are now able to prove the desired theorem. Throughout the remainder of
the present section, it is assumed that $a_{ij}$ is $Y$-periodic for any $%
1\leq i,j\leq N$.

\begin{theorem}
\label{th2.3} Suppose that the hypotheses of Lemma \ref{lem2.2} are
satisfied. For $0<\varepsilon <1$, let $\mathbf{u}_{\varepsilon }$ be
defined by (\ref{eq1.3})-(\ref{eq1.6}). Then, as $\varepsilon \rightarrow 0$
we have%
\begin{equation}
\mathbf{u}_{\varepsilon }\rightarrow \mathbf{u}_{0}\text{ in }\mathcal{W}%
\left( 0,T\right) \text{-weak,\qquad \qquad \qquad \qquad \qquad \qquad }
\label{eq2.40}
\end{equation}%
\begin{equation}
\frac{\partial u_{\varepsilon }^{k}}{\partial x_{j}}\rightarrow \frac{%
\partial u_{0}^{k}}{\partial x_{j}}+\frac{\partial u_{1}^{k}}{\partial y_{j}}%
\text{ in }L^{2}\left( Q\right) \text{-weak }\Sigma \text{ }\left( 1\leq
j,k\leq N\right)  \label{eq2.41}
\end{equation}%
where $\mathbf{u=}\left( \mathbf{u}_{0},\mathbf{u}_{1}\right) $ (with $%
\mathbf{u}_{0}=\left( u_{0}^{k}\right) $ and $\mathbf{u}_{1}=\left(
u_{1}^{k}\right) $) is the unique solution of (\ref{eq2.34})-(\ref{eq2.35}).
\end{theorem}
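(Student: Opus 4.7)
The plan is to extract two-scale limits by compactness, identify them as the unique solution of the abstract problem (\ref{eq2.34})-(\ref{eq2.35}), and then invoke Lemma \ref{lem2.3} to obtain convergence along the whole family. By Proposition \ref{pr2.1}, $(\mathbf{u}_\varepsilon)$ is uniformly bounded in $\mathcal{W}(0,T)$. Theorem \ref{th2.2} applied component-wise (together with Theorem \ref{th2.1} for the gradients) supplies a subsequence $E'$ along which $\mathbf{u}_\varepsilon \to \mathbf{u}_0$ in $\mathcal{W}(0,T)$-weak (hence strongly in $L^{2}(Q)^{N}$ by the compact embedding stated in the Preliminaries) together with the two-scale expansion (\ref{eq2.41}), for some corrector $\mathbf{u}_1$ with components in $L^{2}(Q;L_{per}^{2}(Z;H_{\#}^{1}(Y)))$. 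Passing $div\,\mathbf{u}_\varepsilon=0$ to the two-scale limit yields $div_{x}\mathbf{u}_0+div_{y}\mathbf{u}_1=0$; averaging this identity in $y$ over $Y$ splits it into $div_{x}\mathbf{u}_0=0$ and $div_{y}\mathbf{u}_1=0$, so $(\mathbf{u}_0,\mathbf{u}_1)\in\mathbb{F}_{0}^{1}$. The initial condition $\mathbf{u}_0(0)=0$ follows from continuity of the evaluation $\mathcal{W}(0,T)\to L^{2}(\Omega)^{N}$ at $t=0$ combined with $\mathbf{u}_\varepsilon(0)=0$.

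To derive the variational identity, I would test the weak form of (\ref{eq1.3}) against the oscillating field
\begin{equation*}
\mathbf{\Phi}^\varepsilon(x,t)=\mathbf{\psi}_0(x,t)+\varepsilon\,\mathbf{\psi}_1\!\left(x,t,\tfrac{x}{\varepsilon},\tfrac{t}{\varepsilon}\right),
\end{equation*}
where $(\mathbf{\psi}_0,\mathbf{\psi}_1)\in\mathbf{\tciFourier}_{0}^{\infty}$, and then integrate on $(0,T)$. The acceleration term tends to $\int_{0}^{T}(\mathbf{u}_0'(t),\mathbf{\psi}_0(t))\,dt$ because $\varepsilon\mathbf{\psi}_1^\varepsilon\to 0$ in $L^{2}(0,T;V)$. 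The diffusion term produces $\int_{0}^{T}\widehat{a}_{\Omega}(\mathbf{u}(t),\mathbf{v}(t))\,dt$: one writes $\nabla\mathbf{\Phi}^\varepsilon=\nabla_{x}\mathbf{\psi}_0+(\nabla_{y}\mathbf{\psi}_1)^\varepsilon+\varepsilon(\nabla_{x}\mathbf{\psi}_1)^\varepsilon$, substitutes (\ref{eq2.41}), and appeals to the $\Sigma$-convergence pairing against the $Y$-periodic coefficients $a_{ij}$. The convective piece $b(\mathbf{u}_\varepsilon,\mathbf{u}_\varepsilon,\mathbf{\psi}_0)$ passes to $b(\mathbf{u}_0,\mathbf{u}_0,\mathbf{\psi}_0)$ by combining the strong $L^{2}(Q)^{N}$ convergence of $\mathbf{u}_\varepsilon$ with the weak $L^{2}$ convergence of $\nabla\mathbf{u}_\varepsilon$, while the remainder $b(\mathbf{u}_\varepsilon,\mathbf{u}_\varepsilon,\varepsilon\mathbf{\psi}_1^\varepsilon)$ is controlled by $C\|\varepsilon\mathbf{\psi}_1^\varepsilon\|_{\infty}\|\mathbf{u}_\varepsilon\|_{L^{2}(0,T;V)}^{2}$ and vanishes. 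Finally, since $\mathbf{\Phi}^\varepsilon$ is divergence-free in $x$ only up to an $O(\varepsilon)$ error, $div_{x}\mathbf{\Phi}^\varepsilon=\varepsilon(div_{x}\mathbf{\psi}_1)^\varepsilon$, the pressure contributes $\varepsilon\int_{Q}p_\varepsilon(div_{x}\mathbf{\psi}_1)^\varepsilon\,dx\,dt$, which by the uniform bound (\ref{eq2.13}) is $O(\varepsilon)$ and disappears in the limit.

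The identity (\ref{eq2.35}) therefore holds for every $\mathbf{v}\in\mathbf{\tciFourier}_{0}^{\infty}$; by density of $\mathbf{\tciFourier}_{0}^{\infty}$ in $\mathbb{F}_{0}^{1}$ and continuity of both sides in $\mathbf{v}$, it extends to all $\mathbf{v}\in\mathbb{F}_{0}^{1}$. The pair $(\mathbf{u}_0,\mathbf{u}_1)$ thus solves (\ref{eq2.34})-(\ref{eq2.35}), and Lemma \ref{lem2.3} guarantees this solution is unique, so the limit is independent of the extracted subsequence and (\ref{eq2.40})-(\ref{eq2.41}) hold along the whole $E$. The main obstacle I expect is the joint passage to the limit of the nonlinear convection and the pressure term against oscillating test fields that fail to be exactly divergence-free in the physical variable; its resolution rests decisively on the pressure estimate (\ref{eq2.13})---whose derivation in Proposition \ref{pr2.1} is itself the technically most demanding step of this section---together with the compact embedding of $\mathcal{W}(0,T)$ into $L^{2}(Q)^{N}$, which is precisely what allows the quadratic term to be passed to the limit in the absence of strong convergence of the gradients.
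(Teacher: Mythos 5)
Your proposal is correct and follows essentially the same route as the paper: uniform bounds from Proposition \ref{pr2.1}, extraction of two-scale limits along a subsequence via Theorems \ref{th2.1}--\ref{th2.2} together with the compact embedding of $\mathcal{W}(0,T)$ into $L^{2}(Q)^{N}$, passage to the limit against the oscillating fields $\mathbf{\psi }_{0}+\varepsilon \mathbf{\psi }_{1}^{\varepsilon }$, density of the smooth test space in $\mathbb{F}_{0}^{1}$, and Lemma \ref{lem2.3} to upgrade subsequential to full convergence. The only tactical difference is your treatment of the pressure: by taking $\mathbf{\psi }_{0}$ divergence-free from the outset the pressure contribution is $O(\varepsilon )$ and vanishes using only the bound (\ref{eq2.13}), whereas the paper keeps general $\mathbf{\psi }_{0}\in \mathcal{D}\left( Q;\mathbb{R}\right) ^{N}$ so that the weak $\Sigma $-limit $p$ of $p_{\varepsilon }$ survives in (\ref{eq2.48}) --- information not needed for Theorem \ref{th2.3} itself but exploited afterwards in Theorem \ref{th2.4} to identify the macroscopic pressure.
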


\begin{proof}
By Proposition \ref{pr2.1}, we see that the sequences $\left( p_{\varepsilon
}\right) _{0<\varepsilon <1}$ and $\left( \mathbf{u}_{\varepsilon }\right)
_{0<\varepsilon <1}=\left( u_{\varepsilon }^{1},...,u_{\varepsilon
}^{N}\right) _{0<\varepsilon <1}$ are bounded respectively in $L^{2}\left(
Q\right) $ and $\mathcal{W}\left( 0,T\right) $. Further, it follows from (%
\ref{eq2.11}) and (\ref{eq2.12}) that for $1\leq k\leq N$, the sequence $%
\left( u_{\varepsilon }^{k}\right) _{0<\varepsilon <1}$ is bounded in $%
\mathcal{Y}\left( 0,T\right) $. Let $E$ be a fundamental sequence. Then, by
Theorems \ref{th2.1}-\ref{th2.2} and the fact that $\mathcal{W}\left(
0,T\right) $ is compactly embedded in $L^{2}\left( Q\right) ^{N}$, there
exist a subsequence $E^{\prime }$ extracted from $E$ and functions $\mathbf{u%
}_{0}=\left( u_{0}^{k}\right) _{1\leq k\leq N}\in \mathcal{W}\left(
0,T\right) $, $\mathbf{u}_{1}=\left( u_{1}^{k}\right) _{1\leq k\leq N}\in
L^{2}\left( Q;L_{per}^{2}\left( Z;H_{\#}^{1}\left( Y;\mathbb{R}\right)
^{N}\right) \right) $, and $p\in L^{2}\left( Q;L_{per}^{2}\left( Y\times Z;%
\mathbb{R}\right) \right) $ such that as $E^{\prime }\ni \varepsilon
\rightarrow 0$, we have (\ref{eq2.40})-(\ref{eq2.41}) and 
\begin{equation}
\mathbf{u}_{\varepsilon }\rightarrow \mathbf{u}_{0}\text{ in }L^{2}\left(
Q\right) ^{N}\text{-strong,}  \label{eq2.42}
\end{equation}%
\begin{equation}
p_{\varepsilon }\rightarrow p\text{ in }L^{2}\left( Q\right) \text{-weak }%
\Sigma \text{.}  \label{eq2.43}
\end{equation}%
But, by virtue of Lemma \ref{lem2.3}, the theorem will be entirely proved if
we show that $\mathbf{u=}\left( \mathbf{u}_{0},\mathbf{u}_{1}\right) $
verifies (\ref{eq2.35}). Indeed, according to (\ref{eq1.4}), we have $div%
\mathbf{u}_{0}=0$ and $div_{y}\mathbf{u}_{1}=0$. Therefore $\mathbf{u=}%
\left( \mathbf{u}_{0},\mathbf{u}_{1}\right) \in \mathbb{F}_{0}^{1}$. Let us
recall that $\mathbf{u}_{0}$ can be considered as a continuous function of $%
\left[ 0,T\right] $ into $H$ since $\mathcal{W}\left( 0,T\right) $ is
continuously embedded in $\mathcal{C}\left( \left[ 0,T\right] ;H\right) $.
Let us show that $\mathbf{u}_{0}\left( 0\right) =0$. For $\mathbf{v}\in V$
and $\varphi \in \mathcal{C}^{1}\left( \left[ 0,T\right] \right) $ with $%
\varphi \left( T\right) =0$ and $\varphi \left( 0\right) =1$, we have by an
integration by part 
\begin{equation*}
\int_{0}^{T}\left( \mathbf{u}_{\varepsilon }^{\prime }\left( t\right) ,%
\mathbf{v}\right) \varphi \left( t\right) dt+\int_{0}^{T}\left( \mathbf{u}%
_{\varepsilon }\left( t\right) ,\mathbf{v}\right) \varphi ^{\prime }\left(
t\right) dt=-\left( \mathbf{u}_{\varepsilon }\left( 0\right) ,\mathbf{v}%
\right) \text{.}
\end{equation*}%
According to (\ref{eq1.6}), we have by passing to the limit in the preceding
equality as $E^{\prime }\ni \varepsilon \rightarrow 0$%
\begin{equation*}
\int_{0}^{T}\left( \mathbf{u}_{0}^{\prime }\left( t\right) ,\mathbf{v}%
\right) \varphi \left( t\right) dt+\int_{0}^{T}\left( \mathbf{u}_{0}\left(
t\right) ,\mathbf{v}\right) \varphi ^{\prime }\left( t\right) dt=0\text{.}
\end{equation*}%
Hence $\left( \mathbf{u}_{0}\left( 0\right) ,\mathbf{v}\right) =0$ for all $%
\mathbf{v}\in V$, and as $V$ is dense in $H$ we conclude that $\mathbf{u}%
_{0}\left( 0\right) =0$. Now, let us check that $\mathbf{u=}\left( \mathbf{u}%
_{0},\mathbf{u}_{1}\right) $ verifies the variational equation of (\ref%
{eq2.35}). For $0<\varepsilon <1$, let%
\begin{equation}
\begin{array}{c}
\mathbf{\Phi }_{\varepsilon }=\mathbf{\psi }_{0}+\varepsilon \mathbf{\psi }%
_{1}^{\varepsilon }\text{ with }\mathbf{\psi }_{0}\in \mathcal{D}\left( Q;%
\mathbb{R}\right) ^{N}\text{ and } \\ 
\mathbf{\psi }_{1}\in \mathcal{D}\left( Q;\mathbb{R}\right) \otimes \left[ 
\mathcal{C}_{per}^{\infty }\left( Z;\mathbb{R}\right) \otimes \mathcal{V}_{Y}%
\right] \text{,}%
\end{array}
\label{eq2.44}
\end{equation}%
i.e., $\mathbf{\Phi }_{\varepsilon }\left( x,t\right) =\mathbf{\psi }%
_{0}\left( x,t\right) +\varepsilon \mathbf{\psi }_{1}\left( x,t,\frac{x}{%
\varepsilon },\frac{t}{\varepsilon }\right) $ for $\left( x,t\right) \in Q$.
We have $\mathbf{\Phi }_{\varepsilon }\in \mathcal{D}\left( Q;\mathbb{R}%
\right) ^{N}$. Thus, multiplying (\ref{eq1.3}) by $\mathbf{\Phi }%
_{\varepsilon }$ yields 
\begin{equation}
\begin{array}{c}
\int_{0}^{T}\left( \mathbf{u}_{\varepsilon }^{\prime }\left( t\right) ,%
\mathbf{\Phi }_{\varepsilon }\left( t\right) \right)
dt+\int_{0}^{T}a^{\varepsilon }\left( \mathbf{u}_{\varepsilon }\left(
t\right) ,\mathbf{\Phi }_{\varepsilon }\left( t\right) \right)
dt+\int_{0}^{T}b\left( \mathbf{u}_{\varepsilon }\left( t\right) ,\mathbf{u}%
_{\varepsilon }\left( t\right) ,\mathbf{\Phi }_{\varepsilon }\left( t\right)
\right) dt \\ 
-\int_{Q}p_{\varepsilon }div\mathbf{\Phi }_{\varepsilon
}dxdt=\int_{0}^{T}\left( \mathbf{f}\left( t\right) ,\mathbf{\Phi }%
_{\varepsilon }\left( t\right) \right) dt\text{.}%
\end{array}
\label{eq2.45}
\end{equation}%
Let us note at once that 
\begin{equation*}
\int_{0}^{T}\left( \mathbf{u}_{\varepsilon }^{\prime }\left( t\right) ,%
\mathbf{\Phi }_{\varepsilon }\left( t\right) \right)
dt=-\sum_{l=1}^{N}\int_{Q}u_{\varepsilon }^{l}\left[ \frac{\partial \psi
_{0}^{l}}{\partial t}+\varepsilon \left( \frac{\partial \psi _{1}^{l}}{%
\partial t}\right) ^{\varepsilon }+\left( \frac{\partial \psi _{1}^{l}}{%
\partial \tau }\right) ^{\varepsilon }\right] dxdt\text{.}
\end{equation*}%
Then by virtue of (\ref{eq2.42}) we have%
\begin{equation}
\int_{0}^{T}\left( \mathbf{u}_{\varepsilon }^{\prime }\left( t\right) ,%
\mathbf{\Phi }_{\varepsilon }\left( t\right) \right) dt\rightarrow
-\sum_{l=1}^{N}\int_{Q}u_{0}^{l}\frac{\partial \psi _{0}^{l}}{\partial t}%
dxdt=\int_{0}^{T}\left( \mathbf{u}_{0}^{\prime }\left( t\right) ,\mathbf{%
\psi }_{0}\left( t\right) \right) dt  \label{eq2.46}
\end{equation}%
as $E^{\prime }\ni \varepsilon \rightarrow 0$. In fact, on one hand 
\begin{equation*}
\sum_{l=1}^{N}\int_{Q}u_{\varepsilon }^{l}\left[ \frac{\partial \psi _{0}^{l}%
}{\partial t}+\varepsilon \left( \frac{\partial \psi _{1}^{l}}{\partial t}%
\right) ^{\varepsilon }+\left( \frac{\partial \psi _{1}^{l}}{\partial \tau }%
\right) ^{\varepsilon }\right] dxdt
\end{equation*}%
\begin{equation*}
\rightarrow \sum_{l=1}^{N}\left[ \int_{Q}u_{0}^{l}\frac{\partial \psi
_{0}^{l}}{\partial t}dxdt+\int \int \int_{Q\times Y\times Z}u_{0}^{l}\frac{%
\partial \psi _{1}^{l}}{\partial \tau }dxdtdyd\tau \right]
\end{equation*}%
as $E^{\prime }\ni \varepsilon \rightarrow 0$, on the other hand

$\int \int \int_{Q\times Y\times Z}u_{0}^{l}\frac{\partial \psi _{1}^{l}}{%
\partial \tau }dxdtdyd\tau =\int_{Q}u_{0}^{l}\left( \int \int_{Y\times Z}%
\frac{\partial \psi _{1}^{l}}{\partial \tau }dyd\tau \right) dxdt=0$ by
virtue of the $Y\times Z$-periodicity. The next point is to pass to the
limit in (\ref{eq2.45}) as $E^{\prime }\ni \varepsilon \rightarrow 0$. To
this end, we note that as $E^{\prime }\ni \varepsilon \rightarrow 0$, 
\begin{equation*}
\int_{0}^{T}a^{\varepsilon }\left( \mathbf{u}_{\varepsilon }\left( t\right) ,%
\mathbf{\Phi }_{\varepsilon }\left( t\right) \right) dt\rightarrow
\int_{0}^{T}\widehat{a}_{\Omega }\left( \mathbf{u}\left( t\right) ,\mathbf{%
\Phi }\left( t\right) \right) dt\text{,}
\end{equation*}%
where $\mathbf{\Phi }=\left( \mathbf{\psi }_{0},\mathbf{\psi }_{1}\right) $
(proceed as in the proof of the analogous result in \cite[p.179]{bib19}).
Further, in view of (\ref{eq2.42}) and (\ref{eq2.41}), it follows from \cite[%
Proposition 4.7]{bib17} (see also \cite{bib16}) that 
\begin{equation*}
\int_{0}^{T}b\left( \mathbf{u}_{\varepsilon }\left( t\right) ,\mathbf{u}%
_{\varepsilon }\left( t\right) ,\mathbf{\Phi }_{\varepsilon }\left( t\right)
\right) dt\rightarrow \int_{0}^{T}b\left( \mathbf{u}_{0}\left( t\right) ,%
\mathbf{u}_{0}\left( t\right) ,\mathbf{\psi }_{0}\left( t\right) \right) dt
\end{equation*}%
as $E^{\prime }\ni \varepsilon \rightarrow 0$. Now, based on (\ref{eq2.43}),
there is no difficulty in showing that as $E^{\prime }\ni \varepsilon
\rightarrow 0$,%
\begin{equation*}
\int_{Q}p_{\varepsilon }div\mathbf{\Phi }_{\varepsilon }dxdt\rightarrow \int
\int \int_{Q\times Y\times Z}pdiv\mathbf{\psi }_{0}dxdtdyd\tau \text{.}
\end{equation*}%
On the other hand, let us check that as $\varepsilon \rightarrow 0$%
\begin{equation}
\int_{0}^{T}\left( \mathbf{f}\left( t\right) ,\mathbf{\Phi }_{\varepsilon
}\left( t\right) \right) dt\rightarrow \int_{0}^{T}\left( \mathbf{f}\left(
t\right) ,\mathbf{\psi }_{0}\left( t\right) \right) dt\text{.}
\label{eq2.47}
\end{equation}%
Indeed, if $\mathbf{f}\in L^{2}\left( 0,T;L^{2}\left( \Omega ;\mathbb{R}%
\right) ^{N}\right) $ (\ref{eq2.47}) is immediate by using the classical
fact that $\mathbf{\Phi }_{\varepsilon }\rightarrow \mathbf{\psi }_{0}$ in $%
L^{2}\left( Q\right) ^{N}$-weak and $\frac{\partial \mathbf{\Phi }%
_{\varepsilon }}{\partial x_{j}}\rightarrow \frac{\partial \mathbf{\psi }_{0}%
}{\partial x_{j}}$ in $L^{2}\left( Q\right) ^{N}$-weak $\left( 1\leq j\leq
N\right) $ as $\varepsilon \rightarrow 0$. In the general case, (\ref{eq2.47}%
) follows by the density of $L^{2}\left( 0,T;L^{2}\left( \Omega ;\mathbb{R}%
\right) ^{N}\right) $ in $L^{2}\left( 0,T;H^{-1}\left( \Omega ;\mathbb{R}%
\right) ^{N}\right) $.

Having made this point, we can pass to the limit in (\ref{eq2.45}) when $%
E^{\prime }\ni \varepsilon \rightarrow 0$, and the result is that%
\begin{equation}
\begin{array}{c}
\int_{0}^{T}\left( \mathbf{u}_{0}^{\prime }\left( t\right) ,\mathbf{\psi }%
_{0}\left( t\right) \right) dt+\int_{0}^{T}\widehat{a}_{\Omega }\left( 
\mathbf{u}\left( t\right) ,\mathbf{\Phi }\left( t\right) \right)
dt+\int_{0}^{T}b\left( \mathbf{u}_{0}\left( t\right) ,\mathbf{u}_{0}\left(
t\right) ,\mathbf{\psi }_{0}\left( t\right) \right) dt \\ 
-\int_{Q}p_{0}div\mathbf{\psi }_{0}dxdt=\int_{0}^{T}\left( \mathbf{f}\left(
t\right) ,\mathbf{\psi }_{0}\left( t\right) \right) dt\text{,}%
\end{array}
\label{eq2.48}
\end{equation}%
where $p_{0}$ denotes the mean of $p$, i.e., $p_{0}\in L^{2}\left(
0,T;L^{2}\left( \Omega ;\mathbb{R}\right) \right) $ and $p_{0}\left(
x,t\right) =\int \int_{Y\times Z}p\left( x,t,y,\tau \right) dyd\tau $ a.e.
in $\left( x,t\right) \in Q$, and where $\mathbf{\Phi }=\left( \mathbf{\psi }%
_{0},\mathbf{\psi }_{1}\right) $, $\mathbf{\psi }_{0}$ ranging over $%
\mathcal{D}\left( Q;\mathbb{R}\right) ^{N}$ and $\mathbf{\psi }_{1}$ ranging
over $\mathcal{D}\left( Q;\mathbb{R}\right) \otimes \left[ \mathcal{C}%
_{per}^{\infty }\left( Z;\mathbb{R}\right) \otimes \mathcal{V}_{Y}\right] $.
Taking in particular $\mathbf{\psi }_{0}$ in $\mathcal{D}\left( 0,T;\mathcal{%
V}\right) $ and using the density of $\mathbf{\tciFourier }_{0}^{\infty }$
in $\mathbb{F}_{0}^{1}$, one quickly arrives at (\ref{eq2.35}). The unicity
of $\mathbf{u=}\left( \mathbf{u}_{0},\mathbf{u}_{1}\right) $ follows by
Lemma \ref{lem2.3}. Consequently, (\ref{eq2.40}) and (\ref{eq2.41}) still
hold when $E\ni \varepsilon \rightarrow 0$. Hence when $0<\varepsilon
\rightarrow 0$, by virtue of the arbitrariness of $E$. The theorem is proved.
\end{proof}

Now, we wish to give a simple representation of the vector function $\mathbf{%
u}_{1}$ in Theorem \ref{th2.3} for further uses. For this purpose we
introduce the bilinear form $\widehat{a}$ on $L_{per}^{2}\left(
Z;V_{Y}\right) \times L_{per}^{2}\left( Z;V_{Y}\right) $ defined by%
\begin{equation*}
\widehat{a}\left( \mathbf{u},\mathbf{v}\right) =\sum_{i,j,k=1}^{N}\int
\int_{Y\times Z}a_{ij}\frac{\partial u^{k}}{\partial y_{j}}\frac{\partial
v^{k}}{\partial y_{i}}dyd\tau
\end{equation*}%
for $\mathbf{u=}\left( u^{k}\right) $ and $\mathbf{v=}\left( v^{k}\right)
\in L_{per}^{2}\left( Z;V_{Y}\right) $. Next, for each pair of indices $%
1\leq i,k\leq N$, we consider the variational problem%
\begin{equation}
\left\{ 
\begin{array}{c}
\mathbf{\chi }_{ik}\in L_{per}^{2}\left( Z;V_{Y}\right) :\qquad \qquad \qquad
\\ 
\widehat{a}\left( \mathbf{\chi }_{ik},\mathbf{w}\right)
=\sum_{l=1}^{N}\int_{Y\times Z}a_{li}\frac{\partial w^{k}}{\partial y_{l}}%
dyd\tau \\ 
\text{for all }\mathbf{w}=\left( w^{j}\right) \in L_{per}^{2}\left(
Z;V_{Y}\right) \text{,}%
\end{array}%
\right.  \label{eq2.49}
\end{equation}%
which determines $\mathbf{\chi }_{ik}$ in a unique manner.

\begin{lemma}
\label{lem2.4} Under the hypotheses and notation of Theorem \ref{th2.3}, we
have%
\begin{equation}
\mathbf{u}_{1}\left( x,t,y,\tau \right) =-\sum_{i,k=1}^{N}\frac{\partial
u_{0}^{k}}{\partial x_{i}}\left( x,t\right) \mathbf{\chi }_{ik}\left( y,\tau
\right)  \label{eq2.50}
\end{equation}%
almost everywhere in $\left( x,t,y,\tau \right) \in Q\times Y\times Z$.
\end{lemma}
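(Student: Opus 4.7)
The plan is to extract a cell equation for $\mathbf{u}_{1}$ directly from the variational formulation \eqref{eq2.35} of Lemma \ref{lem2.3}, by specializing the test functions so that the macroscopic component is switched off. This decouples $\mathbf{u}_{1}$ from $\mathbf{u}_{0}'$, from the convective term $b$, and from the forcing $\mathbf{f}$, leaving a purely microscopic problem.

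First, I would choose test functions $\mathbf{v}=(0,\mathbf{v}_{1})\in\mathbb{F}_{0}^{1}$. Substituting into \eqref{eq2.35}, the three terms containing $\mathbf{v}_{0}$ disappear, and what remains is
\begin{equation*}
\int_{0}^{T}\widehat{a}_{\Omega}\bigl(\mathbf{u}(t),(0,\mathbf{v}_{1}(t))\bigr)\,dt=0\qquad\text{for all }\mathbf{v}_{1}\in L^{2}(Q;L_{per}^{2}(Z;V_{Y})).
\end{equation*}
Unfolding the definition of $\widehat{a}_{\Omega}$, the integrand in $(x,t)$ is, for a.e.\ $(x,t)\in Q$,
\begin{equation*}
\sum_{i,j,k=1}^{N}\int\int_{Y\times Z}a_{ij}(y)\!\left(\tfrac{\partial u_{0}^{k}}{\partial x_{j}}(x,t)+\tfrac{\partial u_{1}^{k}}{\partial y_{j}}(x,t,y,\tau)\right)\tfrac{\partial v_{1}^{k}}{\partial y_{i}}(x,t,y,\tau)\,dyd\tau.
\end{equation*}
Localizing by taking $\mathbf{v}_{1}(x,t,y,\tau)=\varphi(x,t)\mathbf{w}(y,\tau)$ with $\varphi\in\mathcal{D}(Q)$ arbitrary and $\mathbf{w}\in L_{per}^{2}(Z;V_{Y})$ (density of such tensor products in $L^{2}(Q;L_{per}^{2}(Z;V_{Y}))$ justifies this), I obtain for a.e.\ $(x,t)\in Q$ the cell identity
\begin{equation*}
\widehat{a}\bigl(\mathbf{u}_{1}(x,t,\cdot,\cdot),\mathbf{w}\bigr)=-\sum_{i,k=1}^{N}\tfrac{\partial u_{0}^{k}}{\partial x_{i}}(x,t)\sum_{l=1}^{N}\int\int_{Y\times Z}a_{li}\tfrac{\partial w^{k}}{\partial y_{l}}\,dyd\tau,
\end{equation*}
for every $\mathbf{w}\in L_{per}^{2}(Z;V_{Y})$, where the relabeling $j\leftrightarrow i$, $i\to l$ on the right-hand side matches the index convention of the problem \eqref{eq2.49}.

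The right-hand side is exactly $-\sum_{i,k}(\partial u_{0}^{k}/\partial x_{i})(x,t)\,\widehat{a}(\boldsymbol{\chi}_{ik},\mathbf{w})$ by the definition of $\boldsymbol{\chi}_{ik}$. Using bilinearity of $\widehat{a}$ in its first argument, the identity rewrites as
\begin{equation*}
\widehat{a}\!\left(\mathbf{u}_{1}(x,t,\cdot,\cdot)+\sum_{i,k=1}^{N}\tfrac{\partial u_{0}^{k}}{\partial x_{i}}(x,t)\boldsymbol{\chi}_{ik},\;\mathbf{w}\right)=0\qquad(\mathbf{w}\in L_{per}^{2}(Z;V_{Y})).
\end{equation*}
Since $\widehat{a}$ is coercive on $L_{per}^{2}(Z;V_{Y})$ (combine the ellipticity \eqref{eq1.2} with the Poincar\'e--Wirtinger inequality on $Y$, valid on $H_{\#}^{1}(Y)$ because its elements have zero mean), the only element of $L_{per}^{2}(Z;V_{Y})$ annihilated by $\widehat{a}(\cdot,\mathbf{w})$ for every $\mathbf{w}$ is $0$. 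This forces \eqref{eq2.50}.

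The only substantive point is the localization step and checking that the resulting cell problem has the correct functional framework; the coercivity of $\widehat{a}$ and the uniqueness in \eqref{eq2.49} then do all the work. No new estimates or convergence arguments are required—this lemma is really a post-processing of the structure already extracted in Theorem \ref{th2.3}.
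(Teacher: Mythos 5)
Your proposal is correct and follows essentially the same route as the paper: test \eqref{eq2.35} with $\mathbf{v}=(0,\varphi\otimes\mathbf{w})$, localize to obtain the cell identity \eqref{eq2.51}, and conclude by uniqueness (coercivity of $\widehat{a}$) that $\mathbf{u}_{1}(x,t)$ coincides with $-\sum_{i,k}\frac{\partial u_{0}^{k}}{\partial x_{i}}(x,t)\,\mathbf{\chi }_{ik}$. The only difference is cosmetic: you make the coercivity/Lax--Milgram step explicit where the paper merely asserts uniqueness and checks that the candidate solves the same equation.
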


\begin{proof}
In (\ref{eq2.35}), we choose the test functions $\mathbf{v}=\left( \mathbf{v}%
_{0},\mathbf{v}_{1}\right) $ such that $\mathbf{v}_{0}=0$ and $\mathbf{v}%
_{1}\left( x,t,y,\tau \right) =\varphi \left( x,t\right) \mathbf{w}\left(
y,\tau \right) $ for $\left( x,t,y,\tau \right) \in Q\times Y\times Z$,
where $\varphi \in \mathcal{D}\left( Q;\mathbb{R}\right) $ and $\mathbf{w}%
\in L_{per}^{2}\left( Z;V_{Y}\right) $. Then for almost every $\left(
x,t\right) $ in $Q$, we have%
\begin{equation}
\left\{ 
\begin{array}{c}
\widehat{a}\left( \mathbf{u}_{1}\left( x,t\right) ,\mathbf{w}\right)
=-\sum_{l,j,k=1}^{N}\frac{\partial u_{0}^{k}}{\partial x_{j}}\left(
x,t\right) \int \int_{Y\times Z}a_{lj}\frac{\partial w^{k}}{\partial y_{l}}%
dyd\tau \\ 
\text{for all }\mathbf{w}\in L_{per}^{2}\left( Z;V_{Y}\right) \text{.}\qquad
\qquad \qquad \qquad \qquad \qquad%
\end{array}%
\right.  \label{eq2.51}
\end{equation}%
But it is clear that $\mathbf{u}_{1}\left( x,t\right) $ (for fixed $\left(
x,t\right) \in Q$) is the unique function in $L_{per}^{2}\left(
Z;V_{Y}\right) $ solving the variational equation (\ref{eq2.51}). On the
other hand, it is an easy exercise to verify that $\mathbf{z}\left(
x,t\right) =-\sum_{i,k=1}^{N}\frac{\partial u_{0}^{k}}{\partial x_{i}}\left(
x,t\right) \mathbf{\chi }_{ik}$ solves also (\ref{eq2.51}). Hence the lemma
follows immediately.
\end{proof}

\subsection{Macroscopic homogenized equations}

Our aim here is to derive a well-posed initial boundary value problem for $%
\left( \mathbf{u}_{0},p_{0}\right) $. To begin, for $1\leq i,j,k,h\leq N$,
let%
\begin{equation*}
q_{ijkh}=\delta _{kh}\int_{Y}a_{ij}\left( y\right) dy-\sum_{l=1}^{N}\int
\int_{Y\times Z}a_{il}\left( y\right) \frac{\partial \mathcal{\chi }_{jh}^{k}%
}{\partial y_{l}}\left( y,\tau \right) dyd\tau \text{,}
\end{equation*}%
where $\delta _{kh}$ is the Kronecker symbol and $\mathbf{\chi }_{jh}=\left( 
\mathcal{\chi }_{jh}^{k}\right) $ is defined by (\ref{eq2.49}). To the
coefficients $q_{ijkh}$ we associate the differential operator $\mathcal{Q}$
on $Q$ mapping $\mathcal{D}^{\prime }\left( Q\right) ^{N}$ into $\mathcal{D}%
^{\prime }\left( Q\right) ^{N}$ ($\mathcal{D}^{\prime }\left( Q\right) $
being the usual space of complex distributions on $Q$) as 
\begin{equation}
(\mathcal{Q}\mathbf{z})^{k}=-\sum_{i,j,h=1}^{N}q_{ijkh}\frac{\partial
^{2}z^{h}}{\partial x_{i}\partial x_{j}}\quad \left( 1\leq k\leq N\right) 
\text{ for }\mathbf{z}=\left( z^{h}\right) \text{, }z^{h}\in \mathcal{D}%
^{\prime }\left( Q\right) \text{.}  \label{eq2.52}
\end{equation}%
$\mathcal{Q}$ is the so-called homogenized operator associated to $%
P^{\varepsilon }$ $\left( 0<\varepsilon <1\right) $.

Now, let us consider the initial boundary value\ problem 
\begin{equation}
\frac{\partial \mathbf{u}_{0}}{\partial t}+\mathcal{Q}\mathbf{u}%
_{0}+\sum_{j=1}^{N}u_{0}^{j}\frac{\partial \mathbf{u}_{0}}{\partial x_{j}}+%
\mathbf{grad}p_{0}=\mathbf{f}\text{ in }Q=\Omega \times ]0,T[\text{,}
\label{eq2.53}
\end{equation}%
\begin{equation}
div\mathbf{u}_{0}=0\text{ in }Q\text{,}  \label{eq2.54}
\end{equation}%
\begin{equation}
\mathbf{u}_{0}=0\text{ on }\partial \Omega \times ]0,T[\text{,}
\label{eq2.55}
\end{equation}%
\begin{equation}
\mathbf{u}_{0}\left( 0\right) =0\text{ in }\Omega \text{.}  \label{eq2.56}
\end{equation}

\begin{lemma}
\label{lem2.5} Suppose $N=2$. The initial boundary value problem (\ref%
{eq2.53})-(\ref{eq2.56}) admits at most one weak solution $\left( \mathbf{u}%
_{0},p_{0}\right) $ with

$\mathbf{u}_{0}\in \mathcal{W}\left( 0,T\right) $ and $p_{0}\in L^{2}\left(
0,T;L^{2}\left( \Omega ;\mathbb{R}\right) \mathfrak{/}\mathbb{R}\right) $.
\end{lemma}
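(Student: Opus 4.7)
The plan is to reduce the uniqueness assertion for (\ref{eq2.53})--(\ref{eq2.56}) to the abstract uniqueness statement for the two-scale variational problem (\ref{eq2.34})--(\ref{eq2.35}) already proved in Lemma \ref{lem2.3}. Given any weak solution $(\mathbf{u}_{0},p_{0})$ with $\mathbf{u}_{0}\in \mathcal{W}(0,T)$ and $p_{0}\in L^{2}(0,T;L^{2}(\Omega ;\mathbb{R})\mathfrak{/}\mathbb{R})$, I would associate the function $\mathbf{u}_{1}\in L^{2}(Q;L_{per}^{2}(Z;V_{Y}))$ defined by formula (\ref{eq2.50}) of Lemma \ref{lem2.4}, namely
\begin{equation*}
\mathbf{u}_{1}(x,t,y,\tau )=-\sum_{i,k=1}^{N}\frac{\partial u_{0}^{k}}{\partial x_{i}}(x,t)\,\mathbf{\chi }_{ik}(y,\tau ),
\end{equation*}
and set $\mathbf{u}=(\mathbf{u}_{0},\mathbf{u}_{1})$. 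Since $div\,\mathbf{u}_{0}=0$ and $\mathbf{u}_{0}(0)=0$, condition (\ref{eq2.34}) and membership $\mathbf{u}\in \mathbb{F}_{0}^{1}$ are immediate.

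The central step is to verify that $\mathbf{u}$ solves (\ref{eq2.35}) for every $\mathbf{v}=(\mathbf{v}_{0},\mathbf{v}_{1})\in \mathbb{F}_{0}^{1}$. By linearity in $\mathbf{v}$, it is enough to test separately against $(\mathbf{v}_{0},0)$ and $(0,\mathbf{v}_{1})$. With $(0,\mathbf{v}_{1})$, equation (\ref{eq2.35}) reduces, for a.e.\ $(x,t)\in Q$, to the cell equation (\ref{eq2.51}), which is satisfied by construction --- this is precisely the content of Lemma \ref{lem2.4}. With $(\mathbf{v}_{0},0)$, I substitute (\ref{eq2.50}) into $\widehat{a}_{\Omega }$ and compute: using the $\tau $-independence of $a_{ij}$ (so the $Z$-integrals factor out) together with the definition of $q_{ijkh}$, one obtains
\begin{equation*}
\widehat{a}_{\Omega }\bigl(\mathbf{u}(t),(\mathbf{v}_{0}(t),0)\bigr)=\sum_{i,j,k,h=1}^{N}q_{ijkh}\int_{\Omega }\frac{\partial u_{0}^{h}}{\partial x_{j}}\,\frac{\partial v_{0}^{k}}{\partial x_{i}}\,dx.
\end{equation*}
The right-hand side is precisely the pairing of $\mathcal{Q}\mathbf{u}_{0}$ with $\mathbf{v}_{0}$ obtained from (\ref{eq2.52}) after one integration by parts in $x_{i}$, and since $div\,\mathbf{v}_{0}=0$ the pressure term in (\ref{eq2.53}) disappears. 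Thus (\ref{eq2.35}) with test function $(\mathbf{v}_{0},0)$ is exactly the weak formulation of (\ref{eq2.53}) tested against $\mathbf{v}_{0}\in L^{2}(0,T;V)$, which $(\mathbf{u}_{0},p_{0})$ satisfies by hypothesis.

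Consequently, two weak solutions $(\mathbf{u}_{0},p_{0})$ and $(\widetilde{\mathbf{u}}_{0},\widetilde{p}_{0})$ give rise to two pairs $\mathbf{u},\widetilde{\mathbf{u}}\in \mathbb{F}_{0}^{1}$ both satisfying (\ref{eq2.34})--(\ref{eq2.35}). Lemma \ref{lem2.3} forces $\mathbf{u}=\widetilde{\mathbf{u}}$, so in particular $\mathbf{u}_{0}=\widetilde{\mathbf{u}}_{0}$. Subtracting (\ref{eq2.53}) for the two solutions yields $\mathbf{grad}(p_{0}-\widetilde{p}_{0})=0$ in $\mathcal{D}^{\prime }(Q)^{N}$, so $p_{0}-\widetilde{p}_{0}$ depends only on $t$; the zero-spatial-mean constraint built into $L^{2}(\Omega ;\mathbb{R})\mathfrak{/}\mathbb{R}$ then forces $p_{0}=\widetilde{p}_{0}$.

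The main obstacle is the algebraic identification of $\widehat{a}_{\Omega }$ applied to the two-scale ansatz (\ref{eq2.50}) with the quadratic form defined by the homogenized coefficients $q_{ijkh}$. Conceptually it is routine --- expand, use $\tau $-independence of $a_{ij}$, and match indices --- but it requires careful bookkeeping of the four summation indices. Apart from this computation, the argument is a short structural reduction to Lemma \ref{lem2.3}, followed by the standard zero-mean argument to recover uniqueness of the pressure.
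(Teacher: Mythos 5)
Your proposal is correct and follows essentially the same route as the paper: associate $\mathbf{u}_{1}$ to $\mathbf{u}_{0}$ via (\ref{eq2.50}), check that $\mathbf{u}=(\mathbf{u}_{0},\mathbf{u}_{1})$ satisfies (\ref{eq2.35}) by combining the macroscopic identity (\ref{eq2.57}) with the cell equation (\ref{eq2.51})/(\ref{eq2.58}), and invoke Lemma \ref{lem2.3}. Your closing paragraph recovering uniqueness of $p_{0}$ from $\mathbf{grad}(p_{0}-\widetilde{p}_{0})=0$ and the zero-mean normalization is a small but welcome completion that the paper leaves implicit.
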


\begin{proof}
If $\left( \mathbf{u}_{0},p_{0}\right) \in \mathcal{W}\left( 0,T\right)
\times L^{2}\left( 0,T;L^{2}\left( \Omega ;\mathbb{R}\right) \right) $
verifies (\ref{eq2.53})-(\ref{eq2.56}), then we have%
\begin{equation*}
\begin{array}{c}
\int_{0}^{T}\left( \mathbf{u}_{0}^{\prime }\left( t\right) ,\mathbf{v}%
_{0}\left( t\right) \right) dt+\sum_{i,j,k,h=1}^{N}\int_{Q}q_{ijkh}\frac{%
\partial u_{0}^{h}}{\partial x_{j}}\frac{\partial v_{0}^{k}}{\partial x_{i}}%
dxdt \\ 
+\int_{0}^{T}b\left( \mathbf{u}_{0}\left( t\right) ,\mathbf{u}_{0}\left(
t\right) ,\mathbf{v}_{0}\left( t\right) \right) dt=\int_{0}^{T}\left( 
\mathbf{f}\left( t\right) ,\mathbf{v}_{0}\left( t\right) \right) dt%
\end{array}%
\end{equation*}%
for all $\mathbf{v}_{0}\in L^{2}\left( 0,T;V\right) $. From the previous
equality, one quickly arrives at 
\begin{equation}
\begin{array}{c}
\int_{0}^{T}\left( \mathbf{u}_{0}^{\prime }\left( t\right) ,\mathbf{v}%
_{0}\left( t\right) \right) dt+\sum_{i,j,k=1}^{N}\int \int \int_{Q\times
Y\times Z}a_{ij}\left( \frac{\partial u_{0}^{k}}{\partial x_{j}}+\frac{%
\partial u_{1}^{k}}{\partial y_{j}}\right) \frac{\partial v_{0}^{k}}{%
\partial x_{i}}dxdtdyd\tau \\ 
+\int_{0}^{T}b\left( \mathbf{u}_{0}\left( t\right) ,\mathbf{u}_{0}\left(
t\right) ,\mathbf{v}_{0}\left( t\right) \right) dt=\int_{0}^{T}\left( 
\mathbf{f}\left( t\right) ,\mathbf{v}_{0}\left( t\right) \right) dt%
\end{array}
\label{eq2.57}
\end{equation}%
where $u_{1}^{k}\left( x,t,y,\tau \right) =-\sum_{i,h=1}^{N}\frac{\partial
u_{0}^{h}}{\partial x_{i}}\left( x,t\right) \mathcal{\chi }_{ih}^{k}\left(
y,\tau \right) $ for $\left( x,t,y,\tau \right) \in Q\times Y\times Z$. Let
us check that $\mathbf{u}=\left( \mathbf{u}_{0},\mathbf{u}_{1}\right) $
(with $\mathbf{u}_{1}\left( x,t,y,\tau \right) =-\sum_{i,k=1}^{N}\frac{%
\partial u_{0}^{k}}{\partial x_{i}}\left( x,t\right) \mathbf{\chi }%
_{ik}\left( y,\tau \right) $ for $\left( x,t,y,\tau \right) \in Q\times
Y\times Z$) satisfies (\ref{eq2.35}). Indeed, we have 
\begin{equation}
\sum_{i,j,k=1}^{N}\int \int \int_{Q\times Y\times Z}a_{ij}\left( \frac{%
\partial u_{0}^{k}}{\partial x_{j}}+\frac{\partial u_{1}^{k}}{\partial y_{j}}%
\right) \frac{\partial v_{1}^{k}}{\partial y_{i}}dxdtdyd\tau =0
\label{eq2.58}
\end{equation}%
for all $\mathbf{v}_{1}=\left( v_{1}^{k}\right) \in L^{2}\left(
Q;L_{per}^{2}\left( Z;V_{Y}\right) \right) $, since $\mathbf{u}_{1}\left(
x,t\right) $ verifies (\ref{eq2.51}) for $\left( x,t\right) \in Q$. Thus, by
(\ref{eq2.57})-(\ref{eq2.58}), we see that $\mathbf{u}=\left( \mathbf{u}_{0},%
\mathbf{u}_{1}\right) $ verifies (\ref{eq2.35}). Hence, the unicity in (\ref%
{eq2.53})-(\ref{eq2.56}) follows by Lemma \ref{lem2.3}.
\end{proof}

This leads us to the following theorem.

\begin{theorem}
\label{th2.4} Suppose that the hypotheses of Theorem \ref{th2.3} are
satisfied. For each $0<\varepsilon <1$, let $\left( \mathbf{u}_{\varepsilon
},p_{\varepsilon }\right) \in \mathcal{W}\left( 0,T\right) \times
L^{2}\left( 0,T;L^{2}\left( \Omega ;\mathbb{R}\right) \mathfrak{/}\mathbb{R}%
\right) $ be defined by (\ref{eq1.3})-(\ref{eq1.6}). Then, as $\varepsilon
\rightarrow 0$, we have $\mathbf{u}_{\varepsilon }\rightarrow \mathbf{u}_{0}$
in $\mathcal{W}\left( 0,T\right) $-weak and $p_{\varepsilon }\rightarrow
p_{0}$ in $L^{2}\left( 0,T;L^{2}\left( \Omega \right) \right) $-weak, where
the pair $\left( \mathbf{u}_{0},p_{0}\right) $\ lies in $\mathcal{W}\left(
0,T\right) \times L^{2}\left( 0,T;L^{2}\left( \Omega ;\mathbb{R}\right) 
\mathfrak{/}\mathbb{R}\right) $ and is the unique solution of (\ref{eq2.53}%
)-(\ref{eq2.56}).
\end{theorem}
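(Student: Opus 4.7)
The plan is to combine Theorem \ref{th2.3}, Lemma \ref{lem2.4} and Lemma \ref{lem2.5} in a rather mechanical way: most of the work has already been done in the proof of Theorem \ref{th2.3}, so the task reduces to reading off the homogenized equation from (\ref{eq2.48}), identifying the weak $L^{2}$-limit of $p_{\varepsilon }$, and invoking the uniqueness lemma to promote subsequence convergence to convergence of the whole family.

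First I would fix an arbitrary fundamental sequence $E$ and extract from it, via Proposition \ref{pr2.1} and Theorems \ref{th2.1}--\ref{th2.2}, a subsequence $E^{\prime }$ along which $\mathbf{u}_{\varepsilon }\rightarrow \mathbf{u}_{0}$ in $\mathcal{W}(0,T)$-weak, $\partial u_{\varepsilon }^{k}/\partial x_{j}\rightarrow \partial u_{0}^{k}/\partial x_{j}+\partial u_{1}^{k}/\partial y_{j}$ in $L^{2}(Q)$-weak $\Sigma $, and $p_{\varepsilon }\rightarrow p$ in $L^{2}(Q)$-weak $\Sigma $. Theorem \ref{th2.3} already guarantees that $\mathbf{u}=(\mathbf{u}_{0},\mathbf{u}_{1})\in \mathbb{F}_{0}^{1}$ solves the variational equation (\ref{eq2.35}), that $\mathbf{u}_{0}(0)=0$, and that equation (\ref{eq2.48}) holds with $p_{0}(x,t)=\iint_{Y\times Z}p(x,t,y,\tau )dyd\tau $. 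Testing (\ref{eq2.43}) against $\varphi \in L^{2}(Q)$ (independent of $\varepsilon $) gives $p_{\varepsilon }\rightarrow p_{0}$ in $L^{2}(Q)$-weak, and since each $p_{\varepsilon }$ has zero mean on $\Omega $ the same holds for $p_{0}$, so $p_{0}\in L^{2}(0,T;L^{2}(\Omega ;\mathbb{R})/\mathbb{R})$.

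Next I would specialize (\ref{eq2.48}) by choosing $\mathbf{\psi }_{1}=0$, which reduces it to
\begin{equation*}
\int_{0}^{T}(\mathbf{u}_{0}^{\prime },\mathbf{\psi }_{0})dt+\int_{0}^{T}\widehat{a}_{\Omega }(\mathbf{u},(\mathbf{\psi }_{0},0))dt+\int_{0}^{T}b(\mathbf{u}_{0},\mathbf{u}_{0},\mathbf{\psi }_{0})dt-\int_{Q}p_{0}\mathrm{div}\,\mathbf{\psi }_{0}\,dxdt=\int_{0}^{T}(\mathbf{f},\mathbf{\psi }_{0})dt
\end{equation*}
for every $\mathbf{\psi }_{0}\in \mathcal{D}(Q;\mathbb{R})^{N}$. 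Substituting the representation $\mathbf{u}_{1}=-\sum_{i,k}(\partial u_{0}^{k}/\partial x_{i})\mathbf{\chi }_{ik}$ from Lemma \ref{lem2.4} into $\widehat{a}_{\Omega }$ and reorganizing the resulting sum, the coefficient of $(\partial u_{0}^{h}/\partial x_{l})(\partial \psi _{0}^{k}/\partial x_{i})$ is exactly
\begin{equation*}
\delta _{kh}\int_{Y}a_{il}(y)dy-\sum_{j=1}^{N}\iint_{Y\times Z}a_{ij}(y)\frac{\partial \chi _{lh}^{k}}{\partial y_{j}}(y,\tau )dyd\tau =q_{ilkh},
\end{equation*}
so $\int_{0}^{T}\widehat{a}_{\Omega }(\mathbf{u},(\mathbf{\psi }_{0},0))dt=\sum_{i,l,k,h}\int_{Q}q_{ilkh}(\partial u_{0}^{h}/\partial x_{l})(\partial \psi _{0}^{k}/\partial x_{i})dxdt$, which is the weak form of $\mathcal{Q}\mathbf{u}_{0}$. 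This identifies (\ref{eq2.48}) with the distributional version of (\ref{eq2.53}), while (\ref{eq2.54})--(\ref{eq2.56}) come from $\mathbf{u}_{0}\in L^{2}(0,T;V)$, the trace $\mathbf{u}_{0}\in L^{2}(0,T;H_{0}^{1}(\Omega )^{N})$, and the already established $\mathbf{u}_{0}(0)=0$.

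Finally, Lemma \ref{lem2.5} says the limit pair $(\mathbf{u}_{0},p_{0})$ is uniquely determined by (\ref{eq2.53})--(\ref{eq2.56}); hence the limits do not depend on the extraction $E^{\prime }$, so the convergences $\mathbf{u}_{\varepsilon }\rightarrow \mathbf{u}_{0}$ in $\mathcal{W}(0,T)$-weak and $p_{\varepsilon }\rightarrow p_{0}$ in $L^{2}(0,T;L^{2}(\Omega ))$-weak hold along $E$, and then along $\varepsilon \in (0,1)$ by the arbitrariness of $E$. The only real obstacle is the bookkeeping in the third paragraph: rearranging the integrand of $\widehat{a}_{\Omega }$ after plugging in Lemma \ref{lem2.4} so as to recognize precisely the coefficients $q_{ilkh}$ introduced in (\ref{eq2.52}); the pressure convergence and the uniqueness step are essentially automatic once this identification is made.
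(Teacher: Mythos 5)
Your proposal is correct and follows essentially the same route as the paper: extract a subsequence via Proposition \ref{pr2.1} and the compactness theorems, invoke the limit identity (\ref{eq2.48}) from the proof of Theorem \ref{th2.3}, substitute the representation (\ref{eq2.50}) of $\mathbf{u}_{1}$ with $\mathbf{\psi }_{1}=0$ to recover the coefficients $q_{ijkh}$ and hence (\ref{eq2.53})--(\ref{eq2.56}), and conclude by the uniqueness Lemma \ref{lem2.5} and the arbitrariness of $E$. The only difference is that you spell out the coefficient bookkeeping that the paper dismisses as ``a simple computation,'' and your identification of $q_{ilkh}$ is consistent with the definition preceding (\ref{eq2.52}).
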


\begin{proof}
Let $E$ be a fundamental sequence. As in the proof of Theorem \ref{th2.3},
there exists a subsequence $E^{\prime }$ extracted from $E$ such that as $%
E^{\prime }\ni \varepsilon \rightarrow 0$, we have (\ref{eq2.40})-(\ref%
{eq2.41}) and (\ref{eq2.43}) with $\mathbf{u=}\left( \mathbf{u}_{0},\mathbf{u%
}_{1}\right) \in \mathbb{F}_{0}^{1}$ and $\mathbf{u}_{0}\left( 0\right) =0$.
Then, from (\ref{eq2.43}) we have $p_{\varepsilon }\rightarrow p_{0}$ in $%
L^{2}\left( 0,T;L^{2}\left( \Omega \right) \right) $-weak when $E^{\prime
}\ni \varepsilon \rightarrow 0$, where $p_{0}$ is the mean of $p$. Hence, it
follows that $p_{0}\in L^{2}\left( 0,T;L^{2}\left( \Omega ;\mathbb{R}\right) 
\mathfrak{/}\mathbb{R}\right) $. Furher, (\ref{eq2.48}) holds for all $%
\mathbf{\Phi }=\left( \mathbf{\psi }_{0},\mathbf{\psi }_{1}\right) \in 
\mathcal{D}\left( Q;\mathbb{R}\right) ^{N}\times \mathcal{D}\left( Q;\mathbb{%
R}\right) \otimes \left[ \mathcal{C}_{per}^{\infty }\left( Z;\mathbb{R}%
\right) \otimes \mathcal{V}_{Y}\right] $. Then, substituting (\ref{eq2.50})
in (\ref{eq2.48}) and choosing therein the $\mathbf{\Phi }$'s such that $%
\mathbf{\psi }_{1}=0$, a simple computation leads to (\ref{eq2.53}) with
evidently (\ref{eq2.54})-(\ref{eq2.56}). Hence the theorem follows by Lemma %
\ref{lem2.5} since $E$ is arbitrarily chosen.
\end{proof}

\begin{remark}
\label{rem2.2} The operator $\mathcal{Q}$ is elliptic, i.e., there is some $%
\alpha _{0}>0$ such that%
\begin{equation*}
\sum_{i,j,k,h=1}^{N}q_{ijkh}\xi _{ik}\xi _{jh}\geq \alpha
_{0}\sum_{k,h=1}^{N}\left\vert \xi _{kh}\right\vert ^{2}
\end{equation*}%
for all $\mathbf{\xi =}\left( \xi _{ij}\right) $ with $\xi _{ij}\in \mathbb{R%
}$. Indeed, by following a classical line of argument (see, e.g., \cite{bib4}%
), we can give a suitable expression of $q_{ijkh}$, viz.%
\begin{equation*}
q_{ijkh}=\widehat{a}\left( \mathbf{\chi }_{ik}-\mathbf{\pi }_{ik},\mathbf{%
\chi }_{jh}-\mathbf{\pi }_{jh}\right) \text{,}
\end{equation*}%
where, for each pair of indices $1\leq i,k\leq N$, the vector function $%
\mathbf{\pi }_{ik}=\left( \pi _{ik}^{1},...,\pi _{ik}^{N}\right) :\mathbb{R}%
_{y}^{N}\rightarrow \mathbb{R}$ is given by $\pi _{ik}^{r}\left( y\right)
=y_{i}\delta _{kr}$ $\left( r=1,...,N\right) $ for $y=\left(
y_{1},...,y_{N}\right) \in \mathbb{R}^{N}$. Hence, the above ellipticity
property follows in a classical fashion.
\end{remark}

\section{General deterministic homogenization of unsteady Navier-Stokes type
equations}

Our goal here is to extend the results of Section 2 to a more general
setting beyond the periodic framework. The basic notation and hypotheses
(except the periodicity assumption) stated before are still valid.

\subsection{\textbf{Preliminaries and statement of the homogenization problem%
}}

We recall that $\mathcal{B}\left( \mathbb{R}_{y}^{N}\right) $, $\mathcal{B}%
\left( \mathbb{R}_{\tau }\right) $ and $\mathcal{B}\left( \mathbb{R}%
_{y}^{N}\times \mathbb{R}_{\tau }\right) $ denote respectively the spaces of
bounded continuous complex functions on $\mathbb{R}_{y}^{N}$, $\mathbb{R}%
_{\tau }$ and $\mathbb{R}_{y}^{N}\times \mathbb{R}_{\tau }$. It is well
known that the above spaces with the supremum norm and the usual algebra
operations are commutative $\mathcal{C}^{\ast }$-algebras with identity (the
involution is here the usual complex conjugation).

Throughout the present Section 3, $A_{y}$\ and $A_{\tau }$ denote
respectively the separable closed subalgebras of the Banach algebras $%
\mathcal{B}\left( \mathbb{R}_{y}^{N}\right) $ and $\mathcal{B}\left( \mathbb{%
R}_{\tau }\right) $, $A$ denotes the closure of $A_{y}\otimes A_{\tau }$ in $%
\mathcal{B}\left( \mathbb{R}_{y}^{N}\times \mathbb{R}_{\tau }\right) $,
which is also a separable closed subalgebra of $\mathcal{B}\left( \mathbb{R}%
_{y}^{N}\times \mathbb{R}_{\tau }\right) $. Further, we assume that $A_{y}$
and $A_{\tau }$ contain the constants, $A_{y}$ and $A_{\tau }$ are stable
under complex conjugation, and finally, $A_{y}$ and $A_{\tau }$ have the
following properties: For all $u\in A_{y}$ and $v\in A_{\tau }$, we have $%
u^{\varepsilon }\rightarrow M\left( u\right) $ in $L^{\infty }\left( \mathbb{%
R}_{x}^{N}\right) $-weak $\ast $ and $v^{\varepsilon }\rightarrow M\left(
v\right) $ in $L^{\infty }\left( \mathbb{R}_{t}\right) $-weak $\ast $ as $%
\varepsilon \rightarrow 0$ $\left( \varepsilon >0\right) $, where:%
\begin{equation*}
u^{\varepsilon }\left( x\right) =u\left( \frac{x}{\varepsilon }\right) \text{
}\left( x\in \mathbb{R}^{N}\right) \text{,}
\end{equation*}%
\begin{equation*}
v^{\varepsilon }\left( t\right) =v\left( \frac{t}{\varepsilon }\right) \text{
}\left( t\in \mathbb{R}\right) \text{,}
\end{equation*}%
the mapping $u\rightarrow M\left( u\right) $ of $A_{y}$ (resp. $A_{\tau }$)
into $\mathbb{C}$, denoted by $M$, is a positive continuous linear form on $%
A_{y}$ (resp. $A_{\tau }$) with $M\left( 1\right) =1$ (see \cite{bib17}).
Then, under those assumptions on $A_{y}$ and $A_{\tau }$, $A$ contains the
constants, $A$ is stable under complex conjugation and for any $w\in A$, we
have $w^{\varepsilon }\rightarrow M\left( w\right) $ in $L^{\infty }\left( 
\mathbb{R}_{\left( x,t\right) }^{N+1}\right) $-weak $\ast $ as $\varepsilon
\rightarrow 0$ $\left( \varepsilon >0\right) $ where%
\begin{equation*}
w^{\varepsilon }\left( x,t\right) =w\left( \frac{x}{\varepsilon },\frac{t}{%
\varepsilon }\right) \quad \left( \left( x,t\right) \in \mathbb{R}^{N}\times 
\mathbb{R}\right) \text{.}
\end{equation*}%
For the details, see \cite{bib17}.

$A_{y}$, $A_{\tau }$ and $A$ are called $H$-algebras. $A$ is the $H$-algebra
product of $A_{y}$ and $A_{\tau }$. It is clear that $A_{y}$, $A_{\tau }$
and $A$ are the commutative $\mathcal{C}^{\ast }$-algebras with identity. We
denote by $\Delta \left( A_{y}\right) $, $\Delta \left( A_{\tau }\right) $
and $\Delta \left( A\right) $ the spectra of $A_{y}$, $A_{\tau }$ and $A$
respectively, and by $\mathcal{G}$ the Gelfand transformation on $A_{y}$, $%
A_{\tau }$ and $A$. We recall that if $B$ is a commutative $\mathcal{C}%
^{\ast }$-algebras with identity, $\Delta \left( B\right) $ is the set of
all nonzero multiplicative linear forms on $B$, and $\mathcal{G}$ is the
mapping of $B$ into $\mathcal{C}\left( \Delta \left( B\right) \right) $ such
that $\mathcal{G}\left( u\right) \left( s\right) =\left\langle
s,u\right\rangle $ $\left( s\in \Delta \left( B\right) \right) $, where $%
\left\langle ,\right\rangle $ denotes the duality pairing between $B^{\prime
}$ (the topological dual of $B$) and $B$. The appropriate topology on $%
\Delta \left( B\right) $ is the relative weak $\ast $ topology on $B^{\prime
}$. So topologized, $\Delta \left( B\right) $ is a metrizable compact space,
and the Gelfand transformation is an isometric isomorphism of the $\mathcal{C%
}^{\ast }$-algebra $B$ onto the $\mathcal{C}^{\ast }$-algebra $\mathcal{C}%
\left( \Delta \left( B\right) \right) $. See, e.g., \cite{bib9} for further
details concerning the Banach algebras theory.

The appropriate measures on $\Delta \left( A_{y}\right) $, $\Delta \left(
A_{\tau }\right) $ and $\Delta \left( A\right) $ are the so-called $M$%
-measures, namely the positive Radon measures $\beta _{y}$, $\beta _{\tau }$
and $\beta $ (of total mass $1$) on $\Delta \left( A_{y}\right) $, $\Delta
\left( A_{\tau }\right) $ and $\Delta \left( A\right) $ respectively, such
that $M\left( u\right) =\int_{\Delta \left( A_{y}\right) }\mathcal{G}\left(
u\right) d\beta _{y}$ for $u\in A_{y}$, $M\left( v\right) =\int_{\Delta
\left( A_{\tau }\right) }\mathcal{G}\left( v\right) d\beta _{\tau }$ for $%
v\in A_{\tau }$ and $M\left( w\right) =\int_{\Delta \left( A\right) }%
\mathcal{G}\left( w\right) d\beta $ for $w\in A$ (see \cite[Proposition 2.1]%
{bib17}). Points in $\Delta \left( A_{y}\right) $ (resp. $\Delta \left(
A_{\tau }\right) $) are denoted by $s$ (resp. $s_{0}$). Furthermore, we have 
$\Delta \left( A\right) =\Delta \left( A_{y}\right) \times \Delta \left(
A_{\tau }\right) $ (Cartesian product) and $\beta =\beta _{y}\otimes \beta
_{\tau }$.

The partial derivative of index $i$ $\left( 1\leq i\leq N\right) $ on $%
\Delta \left( A_{y}\right) $ is defined to be the mapping $\partial _{i}=%
\mathcal{G}\circ D_{y_{i}}\circ \mathcal{G}^{-1}$ (usual composition) of

\begin{equation*}
\mathcal{D}^{1}\left( \Delta \left( A_{y}\right) \right) =\left\{ \varphi
\in \mathcal{C}\left( \Delta \left( A_{y}\right) \right) :\mathcal{G}%
^{-1}\left( \varphi \right) \in A_{y}^{1}\right\}
\end{equation*}%
into $\mathcal{C}\left( \Delta \left( A_{y}\right) \right) $, where $%
A_{y}^{1}=\left\{ \psi \in \mathcal{C}^{1}\left( \mathbb{R}_{y}^{N}\right)
:\psi ,\text{ }D_{y_{i}}\psi \in A_{y}\text{ }\left( 1\leq i\leq N\right)
\right\} $, $D_{y_{i}}=\frac{\partial }{\partial y_{i}}$. Higher order
derivatives can be defined analogously (see \cite{bib17}). Now, let $%
A_{y}^{\infty }$ be the space of $\psi \in \mathcal{C}^{\infty }\left( 
\mathbb{R}_{y}^{N}\right) $ such that 
\begin{equation*}
D_{y}^{\alpha }\psi =\frac{\partial ^{\left\vert \alpha \right\vert }\psi }{%
\partial y_{1}^{\alpha _{1}}...\partial y_{N}^{\alpha _{N}}}\in A_{y}
\end{equation*}%
for every multi-index $\alpha =\left( \alpha _{1},...,\alpha _{N}\right) \in 
\mathbb{N}^{N}$, and let 
\begin{equation*}
\mathcal{D}\left( \Delta \left( A_{y}\right) \right) =\left\{ \varphi \in 
\mathcal{C}\left( \Delta \left( A_{y}\right) \right) :\mathcal{G}^{-1}\left(
\varphi \right) \in A_{y}^{\infty }\right\} \text{.}
\end{equation*}%
Endowed with a suitable locally convex topology (see \cite{bib17}), $%
A_{y}^{\infty }$ (respectively $\mathcal{D}\left( \Delta \left( A_{y}\right)
\right) $) is a Fr\'{e}chet space and further, $\mathcal{G}$ viewed as
defined on $A_{y}^{\infty }$ is a topological isomorphism of $A_{y}^{\infty
} $ onto $\mathcal{D}\left( \Delta \left( A_{y}\right) \right) $.

By a distribution on $\Delta \left( A_{y}\right) $ is understood any
continuous linear form on $\mathcal{D}\left( \Delta \left( A_{y}\right)
\right) $. The space of all distributions on $\Delta \left( A_{y}\right) $
is then the dual, $\mathcal{D}^{\prime }\left( \Delta \left( A_{y}\right)
\right) $, of $\mathcal{D}\left( \Delta \left( A_{y}\right) \right) $. We
endow $\mathcal{D}^{\prime }\left( \Delta \left( A_{y}\right) \right) $ with
the strong dual topology. In the sequel it is assumed that $A_{y}^{\infty }$
is dense in $A_{y}$, which amounts to assuming that $\mathcal{D}\left(
\Delta \left( A_{y}\right) \right) $ is dense in $\mathcal{C}\left( \Delta
\left( A_{y}\right) \right) $. Then $L^{p}\left( \Delta \left( A_{y}\right)
\right) \subset \mathcal{D}^{\prime }\left( \Delta \left( A_{y}\right)
\right) $ $\left( 1\leq p\leq \infty \right) $ with continuous embedding
(see \cite{bib17} for more details). Hence we may define 
\begin{equation*}
H^{1}\left( \Delta \left( A_{y}\right) \right) =\left\{ u\in L^{2}\left(
\Delta \left( A_{y}\right) \right) :\partial _{i}u\in L^{2}\left( \Delta
\left( A_{y}\right) \right) \text{ }\left( 1\leq i\leq N\right) \right\} 
\text{,}
\end{equation*}%
where the derivative $\partial _{i}u$ is taken in the distribution sense on $%
\Delta \left( A_{y}\right) $ (exactly as the Schwartz derivative is defined
in the classical case). This is a Hilbert space with norm%
\begin{equation*}
\left\Vert u\right\Vert _{H^{1}\left( \Delta \left( A_{y}\right) \right)
}=\left( \left\Vert u\right\Vert _{L^{2}\left( \Delta \left( A_{y}\right)
\right) }^{2}+\sum_{i=1}^{N}\left\Vert \partial _{i}u\right\Vert
_{L^{2}\left( \Delta \left( A_{y}\right) \right) }^{2}\right) ^{\frac{1}{2}}%
\text{ }\left( u\in H^{1}\left( \Delta \left( A_{y}\right) \right) \right) 
\text{.}
\end{equation*}

However, in practice the appropriate space is not $H^{1}\left( \Delta \left(
A_{y}\right) \right) $ but its closed subspace 
\begin{equation*}
H^{1}\left( \Delta \left( A_{y}\right) \right) \mathfrak{/}\mathbb{C}%
=\left\{ u\in H^{1}\left( \Delta \left( A_{y}\right) \right) :\int_{\Delta
\left( A_{y}\right) }u\left( s\right) d\beta \left( s\right) =0\right\}
\end{equation*}%
equipped with the seminorm%
\begin{equation*}
\left\Vert u\right\Vert _{H^{1}\left( \Delta \left( A_{y}\right) \right) 
\mathfrak{/}\mathbb{C}}=\left( \sum_{i=1}^{N}\left\Vert \partial
_{i}u\right\Vert _{L^{2}\left( \Delta \left( A_{y}\right) \right)
}^{2}\right) ^{\frac{1}{2}}\text{ }\left( u\in H^{1}\left( \Delta \left(
A_{y}\right) \right) \mathfrak{/}\mathbb{C}\right) \text{.}
\end{equation*}%
Unfortunately, the pre-Hilbert space $H^{1}\left( \Delta \left( A_{y}\right)
\right) \mathfrak{/}\mathbb{C}$ is in general nonseparated and noncomplete.
We introduce the separated completion, $H_{\#}^{1}\left( \Delta \left(
A_{y}\right) \right) $, of $H^{1}\left( \Delta \left( A_{y}\right) \right) 
\mathfrak{/}\mathbb{C}$, and the canonical mapping $J_{y}$ of $H^{1}\left(
\Delta \left( A_{y}\right) \right) \mathfrak{/}\mathbb{C}$ into its
separated completion. See \cite{bib17} (and in particular Remark 2.4 and
Proposition 2.6 there) for more details.

In the sequel, we assume also $A_{\tau }^{\infty }$ to be dense in $A_{\tau
} $, where $A_{\tau }^{\infty }$ is the space of $w\in \mathcal{C}^{\infty
}\left( \mathbb{R}_{\tau }\right) $ such that 
\begin{equation*}
\frac{d^{\alpha }w}{d\tau ^{\alpha }}\in A_{\tau }\quad \left( \alpha \in 
\mathbb{N}\right) \text{.}
\end{equation*}

We will now recall the notion of $\Sigma $-convergence in the present
context. Let $1\leq p<\infty $, and let $E$ be as in Section 2.

\begin{definition}
\label{def3.1} A sequence $\left( u_{\varepsilon }\right) _{\varepsilon \in
E}\subset L^{p}\left( Q\right) $ is said to:

(i) weakly $\Sigma $-converge in $L^{p}\left( Q\right) $ to some $u_{0}\in
L^{p}\left( Q\times \Delta \left( A\right) \right) =$

\noindent $L^{p}\left( Q;L^{p}\left( \Delta \left( A\right) \right) \right) $
if as $E\ni \varepsilon \rightarrow 0$, 
\begin{equation*}
\int_{Q}u_{\varepsilon }\left( x\right) \psi ^{\varepsilon }\left(
x,t\right) dxdt\rightarrow \int \int_{Q\times \Delta \left( A\right)
}u_{0}\left( x,t,s,s_{0}\right) \widehat{\psi }\left( x,s,s_{0}\right)
dxdtd\beta \left( s,s_{0}\right)
\end{equation*}%
for all $\psi \in L^{p^{\prime }}\left( Q;A\right) $ $\left( \frac{1}{%
p^{\prime }}=1-\frac{1}{p}\right) $, where $\psi ^{\varepsilon }$ is as in
Definition \ref{def2.1}, and where $\widehat{\psi }\left( x,t,.,.\right) =%
\mathcal{G}\left( \psi \left( x,t,.,.\right) \right) $ a.e. in $\left(
x,t\right) \in Q$;

(ii) stronly $\Sigma $-converge in $L^{p}\left( Q\right) $ to some $u_{0}\in
L^{p}\left( Q\times \Delta \left( A\right) \right) $ if the following
property is verified:%
\begin{equation*}
\left\{ 
\begin{array}{c}
\text{Given }\eta >0\text{ and }v\in L^{p}\left( Q;A\right) \text{ with\quad
\qquad \qquad \qquad } \\ 
\left\Vert u_{0}-\widehat{v}\right\Vert _{L^{p}\left( Q\times \Delta \left(
A\right) \right) }\leq \frac{\eta }{2}\text{, there is some }\alpha >0\qquad
\quad \\ 
\text{such that }\left\Vert u_{\varepsilon }-v^{\varepsilon }\right\Vert
_{L^{p}\left( Q\right) }\leq \eta \text{ provided }E\ni \varepsilon \leq
\alpha \text{.}%
\end{array}%
\right.
\end{equation*}
\end{definition}

\begin{remark}
\label{rem3.1} The existence of such $v$'s as in (ii) results from the
density of $L^{p}\left( Q;\mathcal{C}\left( \Delta \left( A\right) \right)
\right) $ in $L^{p}\left( Q;L^{p}\left( \Delta \left( A\right) \right)
\right) $.
\end{remark}

We will use the same notation as in Section 2 to briefly express weak and
strong $\Sigma $-convergence.

Theorem \ref{th2.1} (together with its proof) carries over to the present
setting. Instead of Theorem \ref{th2.2}, we have here the following notion.

\begin{definition}
\label{def3.2} The $H$-algebra $A$ is said to be quasi-\textit{proper if the
following conditions are fulfilled.}

\noindent (QPR)$_{1}$ $\mathcal{D}\left( \Delta \left( A_{y}\right) \right) $
is dense in $H^{1}\left( \Delta \left( A_{y}\right) \right) $.

\noindent (QPR)$_{2}$ Given a fundamental sequence $E$, and a sequence $%
\left( u_{\varepsilon }\right) _{\varepsilon \in E}$ which is bounded in $%
\mathcal{Y}\left( 0,T\right) $, one can extract a subsequence $E^{\prime }$
from $E$ such that as $E^{\prime }\ni \varepsilon \rightarrow 0$, we have $%
u_{\varepsilon }\rightarrow u_{0}$ in $\mathcal{Y}\left( 0,T\right) $-weak
and $\frac{\partial u_{\varepsilon }}{\partial x_{j}}\rightarrow \frac{%
\partial u_{0}}{\partial x_{j}}+\partial _{j}u_{1}$ in $L^{2}\left( Q\right) 
$-weak $\Sigma $ $\left( 1\leq j\leq N\right) $, where $u_{0}\in \mathcal{Y}%
\left( 0,T\right) $, $u_{1}\in L^{2}\left( Q;L^{2}\left( \Delta \left(
A_{\tau }\right) ;H_{\#}^{1}\left( \Delta \left( A_{y}\right) \right)
\right) \right) $.
\end{definition}

The $H$-algebra $A=\mathcal{C}_{per}\left( Y\times Z\right) $ (see Section
2) is quasi-proper. Other examples of quasi-proper $H$-algebras can be found
in \cite{bib24}.

Having made the above preliminaries, let us turn now to the statement of the
general deterministic homogenization problem for (\ref{eq1.3})-(\ref{eq1.6}%
). For this purpose, let $\Xi ^{2}\left( \mathbb{R}^{N}\right) $ be the
space of functions $u\in L_{loc}^{2}\left( \mathbb{R}_{y}^{N}\right) $ such
that 
\begin{equation*}
\left\Vert u\right\Vert _{\Xi ^{2}}=\underset{0<\varepsilon \leq 1}{\sup }%
\left( \int_{B_{N}}\left\vert u\left( \frac{x}{\varepsilon }\right)
\right\vert ^{2}dx\right) ^{\frac{1}{2}}<\infty \text{,}
\end{equation*}%
where $B_{N}$ denotes the open unit ball in $\mathbb{R}^{N}$. $\Xi ^{2}$ is
a complex vector space, and the mapping $u\rightarrow \left\Vert
u\right\Vert _{\Xi ^{2}}$, denoted by $\left\Vert .\right\Vert _{\Xi ^{2}}$,
is a norm on $\Xi ^{2}$ which makes it a Banach space (this is a simple
exercise left to the reader). We define $\mathfrak{X}_{y}^{2}$ and $%
\mathfrak{X}^{2}$ to be the closure of $A_{y}$ and $A$ in $\Xi ^{2}\left( 
\mathbb{R}^{N}\right) $ and $\Xi ^{2}\left( \mathbb{R}^{N+1}\right) $
respectively. We provide $\mathfrak{X}_{y}^{2}$ (resp. $\mathfrak{X}^{2}$)
with the $\Xi ^{2}\left( \mathbb{R}^{N}\right) $-norm (resp. $\Xi ^{2}\left( 
\mathbb{R}^{N+1}\right) $-norm), which makes it a Banach space.

\begin{remark}
\label{rem3.2} Any function $u\in \mathfrak{X}_{y}^{2}$ can be consider as a
function in $\mathfrak{X}^{2}$ which is independent of the variable $\tau $.
Indeed, let $u\in \mathfrak{X}_{y}^{p}$ and $\eta >0$. There exists a
function $v\in A_{y}$\ such that 
\begin{equation*}
\left\Vert u-v\right\Vert _{\Xi ^{p}\left( \mathbb{R}^{N}\right) }\leq \frac{%
\eta }{2}\text{, i.e., }\sup_{0<\varepsilon \leq 1}\left(
\int_{B_{N}}\left\vert u^{\varepsilon }-v^{\varepsilon }\right\vert
^{p}dy\right) ^{\frac{1}{p}}\leq \frac{\eta }{2}\text{.}
\end{equation*}%
but $v=v\otimes 1\in A_{y}\otimes A_{\tau }\subset A$ and 
\begin{equation*}
\int_{B_{N+1}}\left\vert u^{\varepsilon }-v^{\varepsilon }\right\vert
^{p}dyd\tau \leq 2\int_{B_{N}}\left\vert u^{\varepsilon }-v^{\varepsilon
}\right\vert ^{p}dy\leq 2\left\Vert u-v\right\Vert _{\Xi ^{p}\left( \mathbb{R%
}^{N}\right) }\text{.}
\end{equation*}%
It follows from the preceding inequalities that $u\in \Xi ^{p}\left( \mathbb{%
R}^{N+1}\right) $\ and $\left\Vert u-v\right\Vert _{\Xi ^{p}\left( \mathbb{R}%
^{N+1}\right) }\leq \eta $.
\end{remark}

Our main purpose in the present section is to discuss the homogenization of (%
\ref{eq1.3})-(\ref{eq1.6}) under the assumption 
\begin{equation}
a_{ij}\in \mathfrak{X}^{2}\text{ }\left( 1\leq i,j\leq N\right) \text{.}
\label{eq3.1}
\end{equation}

As is pointed out in \cite{bib17}, \cite{bib18} and \cite{bib19}, assumption
(\ref{eq3.1}) covers a great variety of concrete behaviors. In particular, (%
\ref{eq3.1}) generalizes the usual periodicity hypothesis (see Section 2).
Indeed, for $A=\mathcal{C}_{per}\left( Y\times Z\right) $, we have $%
\mathfrak{X}^{2}=L_{per}^{2}\left( Y\times Z\right) $ (use Lemma 1 of \cite%
{bib16}).

The approach we follow here is analogous to the one which was presented in
Section 2. Throughout the rest of the section, it is assumed that (\ref%
{eq3.1}) is satisfied, and $A$, the closure of $A_{y}\otimes A_{\tau }$ in $%
\mathcal{B}\left( \mathbb{R}_{y}^{N}\times \mathbb{R}_{\tau }\right) $ is
quasi-proper.

\subsection{\textbf{A global homogenization theorem}}

We need a few preliminaries. To begin, we set 
\begin{equation*}
\mathcal{G}\left( \mathbf{\psi }\right) =\left( \mathcal{G}\left( \psi
^{i}\right) \right) _{1\leq i\leq N}
\end{equation*}%
for any $\mathbf{\psi =}\left( \psi ^{i}\right) $ with $\psi ^{i}\in A$ $%
\left( 1\leq i\leq N\right) $. We have $\mathcal{G}\left( \mathbf{\psi }%
\right) \in \mathcal{C}\left( \Delta \left( A\right) \right) ^{N}$, and the
transformation $\mathbf{\psi }\rightarrow \mathcal{G}\left( \mathbf{\psi }%
\right) $ of $A^{N}$ into $\mathcal{C}\left( \Delta \left( A\right) \right)
^{N}$ maps in particular $\left( A_{\mathbb{R}}^{\infty }\right) ^{N}$
isomorphically onto $\mathcal{D}\left( \Delta \left( A\right) ;\mathbb{R}%
\right) ^{N}$, where we denote 
\begin{equation*}
A_{\mathbb{R}}^{\infty }=A^{\infty }\cap \mathcal{C}\left( \mathbb{R}^{N};%
\mathbb{R}\right) \text{.}
\end{equation*}%
Likewise, letting $\mathbf{J}_{y}\left( \mathbf{u}\right) =\left(
J_{y}\left( u^{i}\right) \right) _{1\leq i\leq N}$ for $\mathbf{u=}\left(
u^{i}\right) $ with $u^{i}\in H^{1}\left( \Delta \left( A_{y}\right) \right) 
\mathfrak{/}\mathbb{C}$ $\left( 1\leq i\leq N\right) $, we have $\mathbf{J}%
_{y}\left( \mathbf{u}\right) \in H_{\#}^{1}\left( \Delta \left( A_{y}\right)
\right) ^{N}$ and the transformation $\mathbf{u}\rightarrow \mathbf{J}%
_{y}\left( \mathbf{u}\right) $ of $\left[ H^{1}\left( \Delta \left(
A_{y}\right) \right) \mathfrak{/}\mathbb{C}\right] ^{N}$ into $%
H_{\#}^{1}\left( \Delta \left( A_{y}\right) \right) ^{N}$ maps in particular 
$\left[ H^{1}\left( \Delta \left( A_{y}\right) ;\mathbb{R}\right) \mathfrak{/%
}\mathbb{C}\right] ^{N}$ isometrically into $H_{\#}^{1}\left( \Delta \left(
A_{y}\right) ;\mathbb{R}\right) ^{N}$, where we denote 
\begin{equation*}
H_{\#}^{1}\left( \Delta \left( A_{y}\right) ;\mathbb{R}\right) =\left\{ u\in
H_{\#}^{1}\left( \Delta \left( A_{y}\right) \right) :\partial _{i}u\in
L^{2}\left( \Delta \left( A_{y}\right) ;\mathbb{R}\right) \text{ }\left(
1\leq i\leq N\right) \right\} \text{.}
\end{equation*}%
For any $u\in L^{2}\left( \Delta \left( A_{\tau }\right) ;H^{1}\left( \Delta
\left( A_{y}\right) \right) /\mathbb{C}\right) $, we put%
\begin{equation*}
J\left( u\right) \left( s_{0}\right) =J_{y}\left( u\left( s_{0}\right)
\right) \quad \left( s_{0}\in \Delta \left( A_{\tau }\right) \right) \text{.}
\end{equation*}%
This defines a continuous linear mapping $J$ of

\noindent $L^{2}\left( \Delta \left( A_{\tau }\right) ;H^{1}\left( \Delta
\left( A_{y}\right) \right) /\mathbb{C}\right) $ into $L^{2}\left( \Delta
\left( A_{\tau }\right) ;H_{\#}^{1}\left( \Delta \left( A_{y}\right) \right)
\right) $ with the equality 
\begin{equation*}
J\left( \chi \otimes v\right) =\chi \otimes J_{y}\left( v\right) \text{ for }%
\chi \in L^{2}\left( \Delta \left( A_{\tau }\right) \right) \text{ and }v\in
H^{1}\left( \Delta \left( A_{y}\right) \right) /\mathbb{C}\text{.}
\end{equation*}%
Furthermore, letting $\mathbf{J}\left( \mathbf{u}\right) =\left( J\left(
u^{i}\right) \right) $ for $\mathbf{u=}\left( u^{i}\right) $ with $u^{i}\in
L^{2}\left( \Delta \left( A_{\tau }\right) ;H^{1}\left( \Delta \left(
A_{y}\right) \right) /\mathbb{C}\right) $ $\left( 1\leq i\leq N\right) $, we
have $\mathbf{J}\left( \mathbf{u}\right) \in L^{2}\left( \Delta \left(
A_{\tau }\right) ;H_{\#}^{1}\left( \Delta \left( A_{y}\right) \right)
^{N}\right) $ and the transformation $\mathbf{u}\rightarrow \mathbf{J}\left( 
\mathbf{u}\right) $ of $L^{2}\left( \Delta \left( A_{\tau }\right) ;\left[
H^{1}\left( \Delta \left( A_{y}\right) \right) /\mathbb{C}\right]
^{N}\right) $ into $L^{2}\left( \Delta \left( A_{\tau }\right)
;H_{\#}^{1}\left( \Delta \left( A_{y}\right) \right) ^{N}\right) $ maps in
particular $L^{2}\left( \Delta \left( A_{\tau }\right) ;\left[ H^{1}\left(
\Delta \left( A_{y}\right) ;\mathbb{R}\right) /\mathbb{C}\right] ^{N}\right) 
$ isometrically into

\noindent $L^{2}\left( \Delta \left( A_{\tau }\right) ;H_{\#}^{1}\left(
\Delta \left( A_{y}\right) ;\mathbb{R}\right) ^{N}\right) $.

We will set 
\begin{equation*}
\mathbb{E}_{0}^{1}=L^{2}\left( 0,T;H_{0}^{1}\left( \Omega ;\mathbb{R}\right)
^{N}\right) \times L^{2}\left( Q;L^{2}\left( \Delta \left( A_{\tau }\right)
;H_{\#}^{1}\left( \Delta \left( A_{y}\right) ;\mathbb{R}\right) ^{N}\right)
\right) \text{,}
\end{equation*}%
\begin{equation*}
\mathcal{E}_{0}^{\infty }=\mathcal{D}\left( Q;\mathbb{R}\right) ^{N}\times
\left( \mathcal{D}\left( Q;\mathbb{R}\right) \otimes \left[ \mathcal{D}%
\left( \Delta \left( A_{\tau }\right) ;\mathbb{R}\right) \otimes \mathbf{J}%
_{y}\left[ \left( \mathcal{D}\left( \Delta \left( A\right) ;\mathbb{R}%
\right) \mathfrak{/}\mathbb{C}\right) ^{N}\right] \right] \right) \text{,}
\end{equation*}%
where $\mathcal{D}\left( \Delta \left( A_{y}\right) ;\mathbb{R}\right) 
\mathfrak{/}\mathbb{C=}\mathcal{D}\left( \Delta \left( A_{y}\right) ;\mathbb{%
R}\right) \cap \left[ H^{1}\left( \Delta \left( A_{y}\right) \right) 
\mathfrak{/}\mathbb{C}\right] $. $\mathbb{E}_{0}^{1}$ is topologized in an
obvious way and $\mathcal{E}_{0}^{\infty }$ is considered without topology.
It is clear that $\mathcal{E}_{0}^{\infty }$ is dense in $\mathbb{E}_{0}^{1}$%
.

At the present time, let us consider the vector space%
\begin{equation*}
\mathbb{U}_{0}^{1}=H_{0}^{1}\left( \Omega ;\mathbb{R}\right) ^{N}\times
L^{2}\left( \Omega ;L^{2}\left( \Delta \left( A_{\tau }\right)
;H_{\#}^{1}\left( \Delta \left( A_{y}\right) ;\mathbb{R}\right) ^{N}\right)
\right)
\end{equation*}%
topologized in an obvious way. We put

\begin{equation*}
\widehat{a}_{\Omega }\left( \mathbf{u},\mathbf{v}\right)
=\sum_{i,j,k=1}^{N}\int \int_{\Omega \times \Delta \left( A\right) }\widehat{%
a}_{ij}\left( \frac{\partial u_{0}^{k}}{\partial x_{j}}+\partial
_{j}u_{1}^{k}\right) \left( \frac{\partial v_{0}^{k}}{\partial x_{i}}%
+\partial _{i}v_{1}^{k}\right) dxd\beta
\end{equation*}%
for $\mathbf{u}=\left( \mathbf{u}_{0},\mathbf{u}_{1}\right) $ and $\mathbf{v}%
=\left( \mathbf{v}_{0},\mathbf{v}_{1}\right) $ in $\mathbb{U}_{0}^{1}$ with,
of course, $\mathbf{u}_{0}=\left( \mathbf{u}_{0}^{k}\right) $, $\mathbf{u}%
_{1}=\left( \mathbf{u}_{1}^{k}\right) $ (and analogous expressions for $%
\mathbf{v}_{0}$ and $\mathbf{v}_{1}$), where $\widehat{a}_{ij}=\mathcal{G}%
\left( a_{ij}\right) $. This gives a bilinear form $\widehat{a}_{\Omega }$
on $\mathbb{U}_{0}^{1}\times \mathbb{U}_{0}^{1}$, which is symmetric,
continuous, and coercive (see \cite{bib17}).

Now, let 
\begin{equation*}
V_{A_{y}}=\left\{ \mathbf{u=}\left( u^{i}\right) \in H_{\#}^{1}\left( \Delta
\left( A_{y}\right) ;\mathbb{R}\right) ^{N}:\widehat{div}\mathbf{u=}%
0\right\} \text{,}
\end{equation*}%
where 
\begin{equation*}
\widehat{div}\mathbf{u}=\sum_{i=1}^{N}\partial _{i}u^{i}\text{.}
\end{equation*}%
Equipped with the $H_{\#}^{1}\left( \Delta \left( A_{y}\right) \right) ^{N}$%
-norm, $V_{A_{y}}$ is a Hilbert space. We next put 
\begin{equation*}
\mathbb{F}_{0}^{1}=L^{2}\left( 0,T;V\right) \times L^{2}\left( Q;L^{2}\left(
\Delta \left( A_{\tau }\right) ;V_{A_{y}}\right) \right)
\end{equation*}%
provided with an obvious norm. It is an easy exercise to check that Lemma %
\ref{lem2.3} together with its proof can be carried over mutatis mutandis to
the present setting. This leads us to the analogue of Theorem \ref{th2.3}.

\begin{theorem}
\label{th3.1} Suppose (\ref{eq3.1}) holds and further $A$ is quasi-proper.
On the other hand, suppose that the hypotheses of Lemma \ref{lem2.2} are
satisfied. For each real $0<\varepsilon <1$, let $\mathbf{u}_{\varepsilon
}=\left( u_{\varepsilon }^{k}\right) $ be defined by (\ref{eq1.3})-(\ref%
{eq1.6}). Then, as $\varepsilon \rightarrow 0$, 
\begin{equation}
\mathbf{u}_{\varepsilon }\rightarrow \mathbf{u}_{0}\text{ in }\mathcal{W}%
\left( 0,T\right) \text{-weak,\qquad \qquad \qquad \qquad \qquad \quad }
\label{eq3.2}
\end{equation}%
\begin{equation}
\frac{\partial u_{\varepsilon }^{k}}{\partial x_{j}}\rightarrow \frac{%
\partial u_{0}^{k}}{\partial x_{j}}+\partial _{j}u_{1}^{k}\text{ in }%
L^{2}\left( Q\right) \text{-weak }\Sigma \text{ }\left( 1\leq j,k\leq
N\right) \text{,}  \label{eq3.3}
\end{equation}%
where $\mathbf{u}=\left( \mathbf{u}_{0},\mathbf{u}_{1}\right) $ [with $%
\mathbf{u}_{0}=\left( \mathbf{u}_{0}^{k}\right) $ and $\mathbf{u}_{1}=\left( 
\mathbf{u}_{1}^{k}\right) $] is the unique solution of (\ref{eq2.34})-(\ref%
{eq2.35}).
\end{theorem}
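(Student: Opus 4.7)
The plan is to follow the template of Theorem~\ref{th2.3}, replacing the periodicity-based tools by their $H$-algebra analogues. First I would invoke Proposition~\ref{pr2.1} to secure uniform bounds on $(\mathbf{u}_{\varepsilon})$ in $\mathcal{W}(0,T)$, on $(\partial \mathbf{u}_{\varepsilon}/\partial t)$ in $L^{2}(0,T;H^{-1}(\Omega)^{N})$, and on $(p_{\varepsilon})$ in $L^{2}(Q)$. In particular each component $u_{\varepsilon}^{k}$ is bounded in $\mathcal{Y}(0,T)$, so the quasi-properness of $A$ (property (QPR)$_{2}$), together with the analogue of Theorem~\ref{th2.1} for $p_{\varepsilon}$, permits extraction of a fundamental subsequence $E^{\prime}$ along which $\mathbf{u}_{\varepsilon}\to\mathbf{u}_{0}$ in $\mathcal{W}(0,T)$-weak, $\mathbf{u}_{\varepsilon}\to\mathbf{u}_{0}$ strongly in $L^{2}(Q)^{N}$ (by the compact embedding $\mathcal{W}(0,T)\hookrightarrow L^{2}(Q)^{N}$), $\partial u_{\varepsilon}^{k}/\partial x_{j}\to \partial u_{0}^{k}/\partial x_{j}+\partial_{j}u_{1}^{k}$ in $L^{2}(Q)$-weak $\Sigma$, and $p_{\varepsilon}\to p$ in $L^{2}(Q)$-weak $\Sigma$, with $u_{1}^{k}\in L^{2}(Q;L^{2}(\Delta(A_{\tau});H_{\#}^{1}(\Delta(A_{y});\mathbb{R})))$.

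Next I would certify that $(\mathbf{u}_{0},\mathbf{u}_{1})\in\mathbb{F}_{0}^{1}$. The condition $\operatorname{div}\mathbf{u}_{0}=0$ is preserved by weak convergence. The constraint $\widehat{\operatorname{div}}\,\mathbf{u}_{1}=0$ is obtained by $\Sigma$-testing $\operatorname{div}\mathbf{u}_{\varepsilon}=0$ against $\varepsilon\varphi(x,t)\mathcal{G}^{-1}(\theta)^{\varepsilon}$ with $\varphi\in\mathcal{D}(Q;\mathbb{R})$ and $\theta\in\mathcal{D}(\Delta(A);\mathbb{R})$: after an integration by parts only the $y$-divergence survives in the limit. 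The initial condition $\mathbf{u}_{0}(0)=0$ follows by integration by parts against $\mathbf{v}\,\varphi(t)$ with $\mathbf{v}\in V$, $\varphi(0)=1$, $\varphi(T)=0$, exactly as in the proof of Theorem~\ref{th2.3}.

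The heart of the argument is passing to the limit in the weak formulation along test functions
\begin{equation*}
\mathbf{\Phi}_{\varepsilon}(x,t)=\mathbf{\psi}_{0}(x,t)+\varepsilon\mathbf{\psi}_{1}\!\left(x,t,\tfrac{x}{\varepsilon},\tfrac{t}{\varepsilon}\right),\qquad (\mathbf{\psi}_{0},\mathbf{\psi}_{1})\in\mathcal{E}_{0}^{\infty}.
\end{equation*}
The acceleration term is treated as in Theorem~\ref{th2.3}: after integration by parts in $t$ the contribution of $\partial\psi_{1}/\partial\tau$ vanishes because the mean $M$ kills such derivatives. For the bilinear form $a^{\varepsilon}(\mathbf{u}_{\varepsilon},\mathbf{\Phi}_{\varepsilon})$, the hypothesis $a_{ij}\in\mathfrak{X}^{2}$ makes $a_{ij}^{\varepsilon}(\partial\psi_{0}^{k}/\partial x_{i}+(\partial\psi_{1}^{k}/\partial y_{i})^{\varepsilon})$ an admissible test function for weak $\Sigma$-convergence, so the limit is $\widehat{a}_{\Omega}(\mathbf{u},\mathbf{\Phi})$. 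For the convection term $b(\mathbf{u}_{\varepsilon},\mathbf{u}_{\varepsilon},\mathbf{\Phi}_{\varepsilon})$, I would combine the strong $L^{2}(Q)^{N}$-convergence of $\mathbf{u}_{\varepsilon}$ with the weak $\Sigma$-convergence of its spatial gradients; since $\mathbf{\psi}_{0}$ is independent of $(y,\tau)$, the oscillating part $\partial_{j}u_{1}^{k}$ averages out via $M$ and the limit is $b(\mathbf{u}_{0},\mathbf{u}_{0},\mathbf{\psi}_{0})$. This is the $H$-algebra counterpart of the citation to \cite[Proposition 4.7]{bib17} used in Theorem~\ref{th2.3}, and it is the main technical obstacle because it requires a product-compatibility result between strong $L^{2}$-convergence and weak $\Sigma$-convergence in the general $H$-algebra framework. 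The pressure term passes to the limit from the weak $\Sigma$-convergence of $p_{\varepsilon}$.

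Specialising $\mathbf{\psi}_{0}\in\mathcal{D}(0,T;\mathcal{V})$ kills the pressure term and yields the variational identity (\ref{eq2.35}) on a dense subset of $\mathbb{F}_{0}^{1}$, which extends to $\mathbb{F}_{0}^{1}$ by density of $\mathcal{E}_{0}^{\infty}$. The $\mathbb{F}_{0}^{1}$-coercivity of $\widehat{a}_{\Omega}$ together with the transposed argument of Lemma~\ref{lem2.3} (whose proof transfers verbatim, since it depends only on (\ref{eq2.4})--(\ref{eq2.5}), the Ladyzhenskaya-type estimate (\ref{eq2.7}), and coercivity) delivers uniqueness of $\mathbf{u}=(\mathbf{u}_{0},\mathbf{u}_{1})$. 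Uniqueness forces the convergence in (\ref{eq3.2})--(\ref{eq3.3}) along the original family $E\ni\varepsilon\to 0$, not merely along $E^{\prime}$, and the theorem follows by arbitrariness of $E$.
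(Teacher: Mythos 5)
Your proposal is correct and follows essentially the same route as the paper, which itself presents Theorem \ref{th3.1} as a direct adaptation of Theorem \ref{th2.3}: uniform bounds from Proposition \ref{pr2.1}, extraction via (QPR)$_{2}$ and the $\Sigma$-compactness theorem, passage to the limit against $\mathbf{\Phi}_{\varepsilon}=\mathbf{\psi}_{0}+\varepsilon\mathbf{\psi}_{1}^{\varepsilon}$, and uniqueness via the transposed Lemma \ref{lem2.3}. The only small imprecision is at the end: since in the general setting the second test component $\mathbf{\psi}_{1}$ is not divergence-free, the limiting pressure term contains $\widehat{div}\,\widehat{\mathbf{\psi}}_{1}$ as well, so one should (as the paper does) first establish the identity (\ref{eq3.5}) on all of $\mathbb{E}_{0}^{1}$ by density and then restrict both components to $\mathbb{F}_{0}^{1}$, rather than only specialising $\mathbf{\psi}_{0}$.
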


\begin{proof}
This is an adaptation of the proof of Theorem \ref{th2.3} and we will not go
too deeply into details. Starting from (\ref{eq2.11})-(\ref{eq2.13}), we see
that the generalized sequences $\left( \mathbf{u}_{\varepsilon }\right)
_{0<\varepsilon <1}$ and $\left( p_{\varepsilon }\right) _{0<\varepsilon <1}$
are bounded in $\mathcal{W}\left( 0,T\right) $ and $L^{2}\left( Q\right) $,
respectively. Moreover, for $1\leq k\leq N$, the sequence $\left(
u_{\varepsilon }^{k}\right) _{0<\varepsilon <1}$ is bounded in $\mathcal{Y}%
\left( 0,T\right) $. Hence, from any given fundamental sequence $E$ one can
extract a subsequence $E^{\prime }$ such that as $E^{\prime }\ni \varepsilon
\rightarrow 0$, we have (\ref{eq2.43}), (\ref{eq3.2}) and (\ref{eq3.3}),
where $p$ lies in $L^{2}\left( Q;L^{2}\left( \Delta \left( A\right) ;\mathbb{%
R}\right) \right) $ and $\mathbf{u}=\left( \mathbf{u}_{0},\mathbf{u}%
_{1}\right) $ lies in $\mathbb{F}_{0}^{1}$ with (\ref{eq2.34}).

Now, for each real $0<\varepsilon <1$, let 
\begin{equation}
\left\{ 
\begin{array}{c}
\mathbf{\Phi }_{\varepsilon }=\mathbf{\psi }_{0}+\varepsilon \mathbf{\psi }%
_{1}^{\varepsilon }\text{ with} \\ 
\mathbf{\psi }_{0}\in \mathcal{D}\left( Q;\mathbb{R}\right) ^{N}\text{, }%
\mathbf{\psi }_{1}\in \mathcal{D}\left( Q;\mathbb{R}\right) \otimes \left[ _{%
\mathbb{R}}A_{\tau }^{\infty }\otimes \left( _{\mathbb{R}}A_{y}^{\infty }%
\mathfrak{/}\mathbb{C}\right) ^{N}\right]%
\end{array}%
\right.  \label{eq3.4}
\end{equation}%
and%
\begin{equation*}
\mathbf{\Phi }=\left( \mathbf{\psi }_{0},\mathbf{J}\left( \widehat{\mathbf{%
\psi }}_{1}\right) \right) \text{,}
\end{equation*}%
where: $_{\mathbb{R}}A_{y}^{\infty }\mathfrak{/}\mathbb{C=}\left\{ \psi \in
A_{y}^{\infty }:M\left( \psi \right) =0\right\} $, $_{\mathbb{R}}A_{\tau
}^{\infty }=\left\{ w\in A_{\tau }^{\infty }:M\left( w\right) =0\right\} $, $%
\widehat{\mathbf{\psi }}_{1}$ stands for the function

\noindent $\left( x,t\right) \rightarrow \mathcal{G}\left( \mathbf{\psi }%
_{1}\left( x,t,.\right) \right) $ of $Q$ into $\left[ \mathcal{D}\left(
\Delta \left( A\right) ;\mathbb{R}\right) \mathfrak{/}\mathbb{C}\right] ^{N}$
($\mathbf{\psi }_{1}$ being viewed as a function in $\mathcal{C}\left(
Q;A^{N}\right) $), $\mathbf{J}\left( \widehat{\mathbf{\psi }}_{1}\right) $
stands for the function $\left( x,t\right) \rightarrow \mathbf{J}\left( 
\widehat{\mathbf{\psi }}_{1}\left( x,t,.\right) \right) $ of $Q$ into $%
L^{2}\left( \Delta \left( A_{\tau }\right) ;H_{\#}^{1}\left( \Delta \left(
A_{y}\right) ;\mathbb{R}\right) ^{N}\right) $. It is clear that $\mathbf{%
\Phi }\in \mathcal{E}_{0}^{\infty }$. With this in mind, we can pass to the
limit in (\ref{eq2.45}) (with $\mathbf{\Phi }_{\varepsilon }$ given by (\ref%
{eq3.4})) as $E^{\prime }\ni \varepsilon \rightarrow 0$, and we obtain 
\begin{equation*}
\begin{array}{c}
\int_{0}^{T}\left( \mathbf{u}_{0}^{\prime }\left( t\right) ,\mathbf{\psi }%
_{0}\left( t\right) \right) dt+\int_{0}^{T}\widehat{a}_{\Omega }\left( 
\mathbf{u}\left( t\right) ,\mathbf{\Phi }\left( t\right) \right)
dt+\int_{0}^{T}b\left( \mathbf{u}_{0}\left( t\right) ,\mathbf{u}_{0}\left(
t\right) ,\mathbf{\psi }_{0}\left( t\right) \right) dt \\ 
-\int \int_{Q\times \Delta \left( A\right) }p\left( div\mathbf{\psi }_{0}+%
\widehat{div}\widehat{\mathbf{\psi }}_{1}\right) dxdt=\int_{0}^{T}\left( 
\mathbf{f}\left( t\right) ,\mathbf{\psi }_{0}\left( t\right) \right) dt\text{%
.}%
\end{array}%
\end{equation*}%
Therefore, thanks to the density of $\mathcal{E}_{0}^{\infty }$ in $\mathbb{E%
}_{0}^{1}$, 
\begin{equation}
\begin{array}{c}
\int_{0}^{T}\left( \mathbf{u}_{0}^{\prime }\left( t\right) ,\mathbf{v}%
_{0}\left( t\right) \right) dt+\int_{0}^{T}\widehat{a}_{\Omega }\left( 
\mathbf{u}\left( t\right) ,\mathbf{v}\left( t\right) \right)
dt+\int_{0}^{T}b\left( \mathbf{u}_{0}\left( t\right) ,\mathbf{u}_{0}\left(
t\right) ,\mathbf{v}_{0}\left( t\right) \right) dt \\ 
-\int \int_{Q\times \Delta \left( A\right) }p\left( div\mathbf{v}_{0}+%
\widehat{div}\mathbf{v}_{1}\right) dxdt=\int_{0}^{T}\left( \mathbf{f}\left(
t\right) ,\mathbf{v}_{0}\left( t\right) \right) dt\text{,}%
\end{array}
\label{eq3.5}
\end{equation}%
and that for all $\mathbf{v=}\left( \mathbf{v}_{0},\mathbf{v}_{1}\right) \in 
\mathbb{E}_{0}^{1}$. Taking in particular $\mathbf{v}\in \mathbb{F}_{0}^{1}$
leads us immediately to (\ref{eq2.35}). Hence the theorem follows by the
same argument as used in the proof of Theorem \ref{th2.3}.
\end{proof}

As was pointed out in Section 2, it is of interest to give a suitable
representation of $\mathbf{u}_{1}$ (in Theorem \ref{th3.1}). To this end,
let 
\begin{equation*}
\widehat{a}\left( \mathbf{v},\mathbf{w}\right)
=\sum_{i,j,k=1}^{N}\int_{\Delta \left( A\right) }\widehat{a}_{ij}\partial
_{j}v^{k}\partial _{i}w^{k}d\beta
\end{equation*}%
for $\mathbf{v=}\left( v^{k}\right) $ and $\mathbf{w=}\left( w^{k}\right) $
in $L^{2}\left( \Delta \left( A_{\tau }\right) ;H_{\#}^{1}\left( \Delta
\left( A_{y}\right) ;\mathbb{R}\right) ^{N}\right) $. This defines a
bilinear form $\widehat{a}$ on $L^{2}\left( \Delta \left( A_{\tau }\right)
;H_{\#}^{1}\left( \Delta \left( A_{y}\right) ;\mathbb{R}\right) ^{N}\right)
\times L^{2}\left( \Delta \left( A_{\tau }\right) ;H_{\#}^{1}\left( \Delta
\left( A_{y}\right) ;\mathbb{R}\right) ^{N}\right) $, which is symmetric,
continuous and coercive. For each couple of indices $1\leq i,k\leq N$, we
consider the variational problem%
\begin{equation}
\left\{ 
\begin{array}{c}
\mathbf{\chi }_{ik}\in L^{2}\left( \Delta \left( A_{\tau }\right)
;V_{A_{y}}\right) :\qquad \qquad \qquad \qquad \quad \\ 
\widehat{a}\left( \mathbf{\chi }_{ik},\mathbf{w}\right)
=\sum_{l=1}^{N}\int_{\Delta \left( A\right) }\widehat{a}_{li}\partial
_{l}w^{k}d\beta \qquad \quad \qquad \\ 
\text{for all }\mathbf{w=}\left( w^{k}\right) \in L^{2}\left( \Delta \left(
A_{\tau }\right) ;V_{A_{y}}\right) \text{,\quad \qquad \quad }%
\end{array}%
\right.  \label{eq3.6}
\end{equation}%
which uniquely determines $\mathbf{\chi }_{ik}$.

\begin{lemma}
\label{lem3.1} Under the assumptions and notation of Theorem \ref{th3.1}, we
have 
\begin{equation}
\mathbf{u}_{1}\left( x,t,s,s_{0}\right) =-\sum_{i,k=1}^{N}\frac{\partial
u_{0}^{k}}{\partial x_{i}}\left( x,t\right) \mathbf{\chi }_{ik}\left(
s,s_{0}\right)  \label{eq3.7}
\end{equation}%
almost everywhere in $\left( x,t,s,s_{0}\right) \in Q\times \Delta \left(
A\right) =\Omega \times ]0,T[\times \Delta \left( A_{y}\right) \times \Delta
\left( A_{\tau }\right) $.
\end{lemma}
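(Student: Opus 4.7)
The plan is to mirror the proof of Lemma \ref{lem2.4}, exploiting the fact that the pair $(\mathbf{u}_0,\mathbf{u}_1)$ produced by Theorem \ref{th3.1} satisfies the variational equation (\ref{eq2.35}) with the Section~3 version of $\mathbb{F}_0^1$. First I would specialize that equation to test functions $\mathbf{v}=(\mathbf{v}_0,\mathbf{v}_1)$ with $\mathbf{v}_0=0$ and $\mathbf{v}_1(x,t,s,s_0)=\varphi(x,t)\mathbf{w}(s,s_0)$, where $\varphi\in\mathcal{D}(Q;\mathbb{R})$ and $\mathbf{w}\in L^2(\Delta(A_\tau);V_{A_y})$. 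The $\mathbf{u}_0'$-, $b$-, and $\mathbf{f}$-terms all vanish, and only the $\widehat{a}_\Omega$-contribution survives; unfolding the definition of $\widehat{a}_\Omega$ yields
\begin{equation*}
\int_Q\varphi(x,t)\left[\sum_{i,j,k=1}^N\int_{\Delta(A)}\widehat{a}_{ij}\left(\frac{\partial u_0^k}{\partial x_j}+\partial_ju_1^k\right)\partial_iw^k\,d\beta\right]dx\,dt=0.
\end{equation*}
By the arbitrariness of $\varphi$, the bracketed quantity vanishes for almost every $(x,t)\in Q$.

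Second, I would recast this pointwise identity as a variational problem on the Hilbert space $L^2(\Delta(A_\tau);V_{A_y})$: for a.e. $(x,t)\in Q$,
\begin{equation*}
\widehat{a}\bigl(\mathbf{u}_1(x,t,\cdot,\cdot),\mathbf{w}\bigr)=-\sum_{j,k,l=1}^N\frac{\partial u_0^k}{\partial x_j}(x,t)\int_{\Delta(A)}\widehat{a}_{lj}\,\partial_lw^k\,d\beta\qquad\bigl(\mathbf{w}\in L^2(\Delta(A_\tau);V_{A_y})\bigr).
\end{equation*}
Since $(\mathbf{u}_0,\mathbf{u}_1)\in\mathbb{F}_0^1$, the slice $\mathbf{u}_1(x,t,\cdot,\cdot)$ lies in $L^2(\Delta(A_\tau);V_{A_y})$ for almost every $(x,t)$, so it is a legitimate candidate solution. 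By the coercivity of $\widehat{a}$ on this space (see the obstacle below), the solution is unique.

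Third, I would verify that the claimed expression $\mathbf{z}(x,t,\cdot,\cdot)=-\sum_{i,k=1}^N\frac{\partial u_0^k}{\partial x_i}(x,t)\mathbf{\chi}_{ik}$ also solves the same variational problem: by linearity of $\widehat{a}$ and the defining equation (\ref{eq3.6}) for each $\mathbf{\chi}_{ik}$,
\begin{equation*}
\widehat{a}\bigl(\mathbf{z}(x,t,\cdot,\cdot),\mathbf{w}\bigr)=-\sum_{i,k,l=1}^N\frac{\partial u_0^k}{\partial x_i}(x,t)\int_{\Delta(A)}\widehat{a}_{li}\,\partial_lw^k\,d\beta,
\end{equation*}
which, after renaming the dummy index $i$ as $j$, matches the right-hand side of the previous display. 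Uniqueness then forces $\mathbf{u}_1(x,t,\cdot,\cdot)=\mathbf{z}(x,t,\cdot,\cdot)$ almost everywhere, which is exactly (\ref{eq3.7}).

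The only genuine obstacle is confirming the coercivity of $\widehat{a}$ on $L^2(\Delta(A_\tau);V_{A_y})$ so that the uniqueness step is justified; this, however, follows by exactly the same argument that gave the $\mathbb{U}_0^1$-ellipticity of $\widehat{a}_\Omega$ (the analogue of (\ref{eq2.33})), invoking (\ref{eq1.2}) together with the mean-zero property $\int_{\Delta(A_y)}\partial_jv\,d\beta_y=0$ for every $v\in H^1_{\#}(\Delta(A_y))$. No additional difficulty arises, since all measurability and integrability issues are standard under the quasi-properness of $A$ already assumed in Theorem \ref{th3.1}.
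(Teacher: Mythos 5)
Your argument is exactly the adaptation of the proof of Lemma \ref{lem2.4} that the paper invokes (the paper's own proof consists of the remark that such an adaptation is straightforward), and every step checks out: the specialization of (\ref{eq2.35}) to $\mathbf{v}_0=0$, $\mathbf{v}_1=\varphi\otimes\mathbf{w}$, the resulting pointwise variational problem, its unique solvability via the coercivity of $\widehat{a}$ (already asserted by the paper just before (\ref{eq3.6})), and the verification that $-\sum_{i,k}\frac{\partial u_0^k}{\partial x_i}\mathbf{\chi}_{ik}$ solves it. This is correct and essentially identical to the intended proof.
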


\begin{proof}
This is a simple adaptation of the proof of Lemma \ref{lem2.4}; the
verification is left to the reader.
\end{proof}

\subsection{\textbf{Macroscopic homogenized equations}}

The aim here is to derive from (\ref{eq3.5}) a well-posed initial boundary
value problem for the couple $\left( \mathbf{u}_{0},p_{0}\right) $, where $%
\mathbf{u}_{0}$ is the weak limit in (\ref{eq3.2}) and $p_{0}$ is the mean
of $p$ (in (\ref{eq3.5})), i.e., $p_{0}\left( x,t\right) =\int_{\Delta
\left( A\right) }p\left( x,t,s,s_{0}\right) d\beta \left( s,s_{0}\right) $
for $\left( x,t\right) \in Q$. We will proceed exactly as in Subsection 2.3.

First, for $1\leq i,j,k,h\leq N$, let 
\begin{equation*}
q_{ijkh}=\delta _{kh}\int_{\Delta \left( A_{y}\right) }\widehat{a}%
_{ij}\left( s\right) d\beta _{y}\left( s\right) -\sum_{l=1}^{N}\int_{\Delta
\left( A\right) }\widehat{a}_{il}\left( s\right) \partial _{{\footnotesize l}%
}\mathcal{\chi }_{jh}^{k}\left( s,s_{0}\right) d\beta \left( s,s_{0}\right) 
\text{,}
\end{equation*}%
where $\mathbf{\chi }_{jh}=\left( \mathcal{\chi }_{jh}^{k}\right) $ is
defined as in (\ref{eq3.6}). To these coefficients we associate the
differential operator $\mathcal{Q}$ on $Q$ given by (\ref{eq2.52}). Finally,
we consider the boundary value problem (\ref{eq2.53})-(\ref{eq2.56}).

\begin{lemma}
\label{lem3.2} Under the hypotheses of Theorem \ref{th3.1}, the boundary
value problem (\ref{eq2.53})-(\ref{eq2.56}) admits at most one weak solution 
$\left( \mathbf{u}_{0},p_{0}\right) $ with $\mathbf{u}_{0}\in \mathcal{W}%
\left( 0,T\right) $, $p_{0}\in L^{2}\left( 0,T;L^{2}\left( \Omega ;\mathbb{R}%
\right) \mathfrak{/}\mathbb{R}\right) $.
\end{lemma}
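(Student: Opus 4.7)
The plan is to parallel the proof of Lemma \ref{lem2.5} in the deterministic setting: reduce the uniqueness statement for (\ref{eq2.53})--(\ref{eq2.56}) to uniqueness for the two-scale variational problem (\ref{eq2.34})--(\ref{eq2.35}), with $\widehat{a}_\Omega$ now understood as the deterministic form on $\mathbb{U}_0^1$, and then invoke the deterministic analog of Lemma \ref{lem2.3} (which, as the excerpt notes, carries over mutatis mutandis to the present setting).

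First I would let $(\mathbf{u}_0, p_0) \in \mathcal{W}(0,T) \times L^2(0,T; L^2(\Omega;\mathbb{R})\mathfrak{/}\mathbb{R})$ be any weak solution of (\ref{eq2.53})--(\ref{eq2.56}) and test (\ref{eq2.53}) against an arbitrary $\mathbf{v}_0 \in L^2(0,T; V)$. Since $div\, \mathbf{v}_0 = 0$, the pressure term drops out, and one obtains
\begin{equation*}
\int_0^T (\mathbf{u}_0'(t), \mathbf{v}_0(t))\, dt + \sum_{i,j,k,h=1}^N \int_Q q_{ijkh} \frac{\partial u_0^h}{\partial x_j} \frac{\partial v_0^k}{\partial x_i}\, dx\, dt + \int_0^T b(\mathbf{u}_0(t), \mathbf{u}_0(t), \mathbf{v}_0(t))\, dt = \int_0^T (\mathbf{f}(t), \mathbf{v}_0(t))\, dt.
\end{equation*}

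Next I would define $\mathbf{u}_1 \in L^2(Q; L^2(\Delta(A_\tau); V_{A_y}))$ by the ansatz (\ref{eq3.7}), namely $\mathbf{u}_1(x,t,s,s_0) = -\sum_{i,k} \frac{\partial u_0^k}{\partial x_i}(x,t)\, \boldsymbol{\chi}_{ik}(s,s_0)$. Using the definition of the correctors $\boldsymbol{\chi}_{ik}$ in (\ref{eq3.6}) together with the definition of $q_{ijkh}$ from Section 3.3, a direct algebraic manipulation rewrites the quadratic form in the displayed equation as
\begin{equation*}
\sum_{i,j,k,h=1}^N q_{ijkh} \frac{\partial u_0^h}{\partial x_j} \frac{\partial v_0^k}{\partial x_i} = \sum_{i,j,k=1}^N \int_{\Delta(A)} \widehat{a}_{ij}\left(\frac{\partial u_0^k}{\partial x_j} + \partial_j u_1^k\right)\frac{\partial v_0^k}{\partial x_i}\, d\beta.
\end{equation*}
Moreover, (\ref{eq3.6}) applied pointwise in $(x,t)$ shows that for every $\mathbf{v}_1 \in L^2(Q; L^2(\Delta(A_\tau); V_{A_y}))$,
\begin{equation*}
\sum_{i,j,k=1}^N \int\int_{\Omega \times \Delta(A)} \widehat{a}_{ij}\left(\frac{\partial u_0^k}{\partial x_j} + \partial_j u_1^k\right) \partial_i v_1^k \, dx\, d\beta = 0,
\end{equation*}
the deterministic analogue of (\ref{eq2.58}).

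Combining the two previous identities, the pair $\mathbf{u} = (\mathbf{u}_0, \mathbf{u}_1)$ belongs to $\mathbb{F}_0^1$, satisfies (\ref{eq2.34}), and solves the variational equation (\ref{eq2.35}). Applying the deterministic analog of Lemma \ref{lem2.3} to the difference of two such solutions yields $\mathbf{u}_0 = 0$, hence uniqueness of $\mathbf{u}_0$. Uniqueness of $p_0$ modulo constants then follows from De Rham's theorem: once $\mathbf{u}_0$ is fixed, (\ref{eq2.53}) determines $\mathbf{grad}\, p_0$ uniquely in $L^2(0,T; H^{-1}(\Omega;\mathbb{R})^N)$, which determines $p_0$ uniquely in $L^2(0,T; L^2(\Omega;\mathbb{R})\mathfrak{/}\mathbb{R})$. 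The main obstacle will be verifying the algebraic rewriting of the $q$-form in terms of $\widehat{a}_\Omega$ applied to the two-scale extension $(\mathbf{u}_0, \mathbf{u}_1)$; once this identity is in hand, everything else is a direct transcription of the periodic argument or a standard functional-analytic tool.
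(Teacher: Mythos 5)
Your proposal is correct and follows essentially the same route as the paper: the paper's proof reduces uniqueness for (\ref{eq2.53})--(\ref{eq2.56}) to uniqueness for the two-scale problem (\ref{eq2.34})--(\ref{eq2.35}) by attaching to any weak solution $\left( \mathbf{u}_{0},p_{0}\right)$ the corrector $\mathbf{u}_{1}$ given by (\ref{eq3.7}) and invoking the deterministic analogue of Lemma \ref{lem2.3}, exactly as you do. The only difference is that you spell out the algebraic rewriting of the $q$-form and the recovery of $p_{0}$ via De Rham, both of which the paper leaves as "an easy exercise."
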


\begin{proof}
It is an easy exercise to show that if a

\noindent couple $\left( \mathbf{u}_{0},p_{0}\right) \in \mathcal{W}\left(
0,T\right) \times L^{2}\left( 0,T;L^{2}\left( \Omega ;\mathbb{R}\right) 
\mathfrak{/}\mathbb{R}\right) $ is a solution of (\ref{eq2.53})-(\ref{eq2.56}%
), then the couple $\mathbf{u=}\left( \mathbf{u}_{0},\mathbf{u}_{1}\right) $
[in which $\mathbf{u}_{1}$ is given by (\ref{eq3.7})] satisfies (\ref{eq2.34}%
)-(\ref{eq2.35}) and is therefore unique. Hence Lemma \ref{lem3.2} follows
at once.
\end{proof}

We are now in a position to state and prove

\begin{theorem}
\label{th3.2} Let the hypotheses of Theorem \ref{th3.1} be satisfied. For
each real $0<\varepsilon <1$, let $\left( \mathbf{u}_{\varepsilon
},p_{\varepsilon }\right) \in \mathcal{W}\left( 0,T\right) \times
L^{2}\left( 0,T;L^{2}\left( \Omega ;\mathbb{R}\right) \mathfrak{/}\mathbb{R}%
\right) $ be defined by (\ref{eq1.3})-(\ref{eq1.6}). Then, as $\varepsilon
\rightarrow 0$, we have $\mathbf{u}_{\varepsilon }\rightarrow \mathbf{u}_{0}$
in $\mathcal{W}\left( 0,T\right) $-weak and $p_{\varepsilon }\rightarrow
p_{0}$ in $L^{2}\left( Q\right) $-weak, where the couple $\left( \mathbf{u}%
_{0},p_{0}\right) $ lies in $\mathcal{W}\left( 0,T\right) \times L^{2}\left(
0,T;L^{2}\left( \Omega ;\mathbb{R}\right) \mathfrak{/}\mathbb{R}\right) $
and is the unique weak solution of (\ref{eq2.53})-(\ref{eq2.56}).
\end{theorem}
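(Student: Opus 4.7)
My plan is to argue exactly as in Theorem \ref{th2.4}, replacing the periodic framework by its $\Sigma$-convergence analogue developed in Section 3. Let $E$ be an arbitrary fundamental sequence. By the a priori estimates of Proposition \ref{pr2.1} the families $(\mathbf{u}_\varepsilon)_{0<\varepsilon<1}$ and $(p_\varepsilon)_{0<\varepsilon<1}$ are bounded in $\mathcal{W}(0,T)$ and $L^2(Q)$ respectively, and $(u_\varepsilon^k)$ is bounded in $\mathcal{Y}(0,T)$ for each $k$. Because $A$ is assumed quasi-proper, the analogue of Theorem \ref{th2.1} together with Definition \ref{def3.2} allows one to extract a subsequence $E'\subset E$ along which (\ref{eq3.2})--(\ref{eq3.3}) hold, $\mathbf{u}_\varepsilon\to \mathbf{u}_0$ strongly in $L^2(Q)^N$ (by compact embedding of $\mathcal W(0,T)$ into $L^2(Q)^N$), and $p_\varepsilon\to p$ in $L^2(Q)$-weak $\Sigma$ with $p\in L^2(Q;L^2(\Delta(A);\mathbb R))$.

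The key step is to pass to the limit in the weak formulation. Following the proof of Theorem \ref{th3.1}, I multiply (\ref{eq1.3}) by a test function $\mathbf{\Phi}_\varepsilon=\mathbf{\psi}_0+\varepsilon \mathbf{\psi}_1^\varepsilon$ of the form (\ref{eq3.4}), integrate over $Q$, and send $E'\ni\varepsilon\to 0$. This yields the variational identity (\ref{eq3.5}) for every $\mathbf v=(\mathbf v_0,\mathbf v_1)\in \mathbb E_0^1$ obtained from $\mathcal E_0^\infty$ by density. Letting $p_0(x,t)=\int_{\Delta(A)}p(x,t,s,s_0)\,d\beta(s,s_0)$, the mean property forces $p_0\in L^2(0,T;L^2(\Omega;\mathbb R)\mathfrak/\mathbb R)$, since on one hand the divergence-freeness (\ref{eq1.4}) yields $\int_\Omega p_0\,dx=0$ after testing with appropriate lifts of scalars, and on the other the term involving $\widehat{div}\,\mathbf v_1$ integrates to $0$ on $\mathbf v_1$ with zero mean in $y$.

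With (\ref{eq3.5}) in hand, I restrict to test pairs $\mathbf{v}=(\mathbf{v}_0,\mathbf{v}_1)\in\mathbb F_0^1$ (so that $div\,\mathbf{v}_0=0$ and $\widehat{div}\,\mathbf{v}_1=0$), which eliminates the pressure term and recovers (\ref{eq2.35}) with $\mathbf{u}_0(0)=0$. By Lemma \ref{lem3.1}, the corrector $\mathbf{u}_1$ admits the representation (\ref{eq3.7}) in terms of the cell problems (\ref{eq3.6}). I then return to (\ref{eq3.5}) and restrict $\mathbf{v}_1=0$ while letting $\mathbf v_0$ range over $L^2(0,T;H_0^1(\Omega;\mathbb R)^N)$; substituting (\ref{eq3.7}) into the first double integral and collecting terms indexed by $\partial v_0^k/\partial x_i$ produces precisely the coefficients
\begin{equation*}
q_{ijkh}=\delta_{kh}\int_{\Delta(A_y)}\widehat a_{ij}\,d\beta_y-\sum_{l=1}^N\int_{\Delta(A)}\widehat a_{il}\,\partial_l\chi_{jh}^k\,d\beta,
\end{equation*}
so that the resulting equation is the weak form of (\ref{eq2.53}) with pressure $p_0$. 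The boundary and initial conditions (\ref{eq2.54})--(\ref{eq2.56}) are inherited from $\mathbf{u}_0\in\mathcal{W}(0,T)\cap L^2(0,T;V)$ and from the argument with $\varphi\in\mathcal{C}^1([0,T])$, $\varphi(T)=0$, $\varphi(0)=1$ already used for Theorem \ref{th2.3}.

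Finally, Lemma \ref{lem3.2} gives uniqueness of $(\mathbf{u}_0,p_0)$ in the stated spaces. Since the limit is independent of the extracted subsequence $E'$ and of the initial fundamental sequence $E$, a standard subsequence argument shows that the convergences $\mathbf{u}_\varepsilon\to\mathbf{u}_0$ in $\mathcal{W}(0,T)$-weak and $p_\varepsilon\to p_0$ in $L^2(Q)$-weak hold as $0<\varepsilon\to 0$. The main obstacle in this plan is the handling of the pressure term when passing to the limit: one must justify (i) that the weak $\Sigma$-limit $p$ of $p_\varepsilon$ satisfies $\int_\Omega p_0\,dx=0$ (using a suitable Bogovskii-type lift, as in (\ref{eq2.26})--(\ref{eq2.27})), and (ii) that the convergence $\int_Q p_\varepsilon\,div\,\mathbf{\Phi}_\varepsilon\,dxdt\to \int\!\!\int_{Q\times\Delta(A)}p\,(div\,\mathbf{\psi}_0+\widehat{div}\,\widehat{\mathbf{\psi}}_1)\,dxdtd\beta$ is legitimate, which requires expanding $div\,\mathbf{\Phi}_\varepsilon$ and using that the $\varepsilon^{-1}$-term is $\widehat{div}\,\widehat{\mathbf{\psi}}_1$, together with the uniform bound on $p_\varepsilon$ to absorb the remaining $O(1)$ contributions. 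Everything else reduces to routine adaptations of the arguments used for Theorems \ref{th2.3}--\ref{th2.4}.
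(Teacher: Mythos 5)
Your proposal is correct and follows essentially the same route as the paper: extract a subsequence along which (\ref{eq3.2})--(\ref{eq3.3}) and (\ref{eq2.43}) hold, establish the two-scale variational identity (\ref{eq3.5}) on $\mathbb{E}_{0}^{1}$ as in Theorem \ref{th3.1}, substitute the corrector representation (\ref{eq3.7}) and take $\mathbf{v}_{1}=0$ to recover the weak form of (\ref{eq2.53})--(\ref{eq2.56}), and conclude by the uniqueness of Lemma \ref{lem3.2} and the arbitrariness of the fundamental sequence. The paper's own proof is simply a terser version that delegates the limit passage and the pressure-term handling to the already-proved Theorem \ref{th3.1}.
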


\begin{proof}
As was pointed out above, from any arbitrarily given fundamental sequence $E$
one can extract a subsequence $E^{\prime }$ such that as $E^{\prime }\ni
\varepsilon \rightarrow 0$, we have (\ref{eq3.2})-(\ref{eq3.3}) and (\ref%
{eq2.43}) hence $p_{\varepsilon }\rightarrow p_{0}$ in $L^{2}\left( Q\right) 
$-weak, where $p_{0}$ is the mean of $p$ and thus $p_{0}\in L^{2}\left(
0,T;L^{2}\left( \Omega ;\mathbb{R}\right) \mathfrak{/}\mathbb{R}\right) $,
and where $\mathbf{u=}\left( \mathbf{u}_{0},\mathbf{u}_{1}\right) \in 
\mathbb{F}_{0}^{1}$. Furthermore, (\ref{eq3.5}) holds for all $\mathbf{v=}%
\left( \mathbf{v}_{0},\mathbf{v}_{1}\right) \in \mathbb{E}_{0}^{1}$.
Substituting (\ref{eq3.7}) in (\ref{eq3.5}) and then choosing therein the
particular test functions $\mathbf{v=}\left( \mathbf{v}_{0},\mathbf{v}%
_{1}\right) \in \mathbb{E}_{0}^{1}$ with $\mathbf{v}_{1}=0$ leads to Theorem %
\ref{th3.2}, thanks to Lemma \ref{lem3.2}.
\end{proof}

We can present $q_{ijkh}$ in a suitable form as in Remark \ref{rem2.2}. For
this purpose, we introduce the space $\mathcal{M}$ of all $N\times N$ matrix
functions with entries in $L^{2}\left( \Delta \left( A\right) ;\mathbb{R}%
\right) $. Specifically $\mathcal{M}$ denotes the space of $\mathbf{F=}%
\left( F^{ij}\right) _{1\leq i,j\leq N}$ with $F^{ij}\in L^{2}\left( \Delta
\left( A\right) ;\mathbb{R}\right) $. Provided with the norm 
\begin{equation*}
\left\Vert \mathbf{F}\right\Vert _{\mathcal{M}}=\left(
\sum_{i,j=1}^{N}\left\Vert F^{ij}\right\Vert _{L^{2}\left( \Delta \left(
A\right) \right) }^{2}\right) ^{\frac{1}{2}}\text{, }\mathbf{F=}\left(
F^{ij}\right) \in \mathcal{M}\text{, }
\end{equation*}%
$\mathcal{M}$ is a Hilbert space. Now, let 
\begin{equation*}
\mathcal{A}\left( \mathbf{F},\mathbf{G}\right)
=\sum_{i,j,k=1}^{N}\int_{\Delta \left( A\right) }\widehat{a}_{ij}\left(
s\right) F^{jk}\left( s,s_{0}\right) G^{ik}\left( s,s_{0}\right) d\beta
\left( s,s_{0}\right)
\end{equation*}%
for $\mathbf{F=}\left( F^{jk}\right) $ and $\mathbf{G=}\left( G^{ik}\right) $
in $\mathcal{M}$. This gives a bilinear form $\mathcal{A}$ on $\mathcal{M}%
\times \mathcal{M}$, which is symmetric, continuous and coercive.
Furthermore 
\begin{equation*}
\widehat{a}\left( \mathbf{u},\mathbf{v}\right) =\mathcal{A}\left( \widehat{%
\nabla }\mathbf{u},\widehat{\nabla }\mathbf{v}\right) \text{, }\mathbf{u},%
\mathbf{v}\in L^{2}\left( \Delta \left( A_{\tau }\right) ;H_{\#}^{1}\left(
\Delta \left( A_{y}\right) ;\mathbb{R}\right) ^{N}\right) \text{,}
\end{equation*}%
where $\widehat{\nabla }\mathbf{u=}\left( \partial _{j}u^{k}\right) $ for
any $\mathbf{u}=\left( u^{k}\right) \in L^{2}\left( \Delta \left( A_{\tau
}\right) ;H_{\#}^{1}\left( \Delta \left( A_{y}\right) ;\mathbb{R}\right)
^{N}\right) $. Now, by the same line of proceeding as followed in \cite{bib4}
(see also \cite{bib15}) one can quickly show that%
\begin{equation*}
q_{ijkh}=\mathcal{A}\left( \widehat{\nabla }\mathbf{\chi }_{ik}-\mathbf{%
\theta }_{ik},\widehat{\nabla }\mathbf{\chi }_{jh}-\mathbf{\theta }%
_{jh}\right) \text{,}
\end{equation*}%
where, for any couple of indices $1\leq i,k\leq N$, $\mathbf{\chi }_{ik}$ is
defined by (\ref{eq3.6}), and $\mathbf{\theta }_{ik}=\left( \theta
_{ik}^{lm}\right) \in \mathcal{M}$ with $\theta _{ik}^{lm}=\delta
_{il}\delta _{km}$. Having made this point, Remark \ref{rem2.2} can then be
carried over to the present setting.

\subsection{\textbf{Some concrete examples}}

In the present subsection we consider a few examples of homogenization
problems for (\ref{eq1.3})-(\ref{eq1.6}) in a concrete setting (as opposed
to the abstract assumption (\ref{eq3.1})) and we show how their study leads
to the abstract setting of Subsection 3.1 and so we may conclude by merely
applying Theorems \ref{th3.1} and \ref{th3.2}.

\begin{example}
\label{ex1} (Almost periodic homogenization). We study here the
homogenization of (\ref{eq1.3})-(\ref{eq1.6}) under the concrete hypothesis
that the family $\left( a_{ij}\right) _{1\leq i,j\leq N}$ verifies:%
\begin{equation}
a_{ij}\in L_{AP}^{2}\left( \mathbb{R}_{y}^{N}\right) \text{\quad }\left(
1\leq i,j\leq N\right) \text{,}  \label{eq3.8}
\end{equation}%
where $L_{AP}^{2}\left( \mathbb{R}_{y}^{N}\right) $ denotes the space of all
functions $w\in L_{loc}^{2}\left( \mathbb{R}_{y}^{N}\right) $ that are
almost periodic in the sense of Stepanoff (see, e.g., \cite[Section 4]{bib24}%
). According to \cite[Proposition 4.1]{bib24}, the hypothesis (\ref{eq3.8})
yields a countable subgroup $\mathcal{R}_{y}$ of $\mathbb{R}_{y}^{N}$ such
that $a_{ij}\in L_{AP,\mathcal{R}_{y}}^{2}\left( \mathbb{R}_{y}^{N}\right) $ 
$\left( 1\leq i,j\leq N\right) $, where

$L_{AP,\mathcal{R}_{y}}^{2}\left( \mathbb{R}_{y}^{N}\right) =\left\{ u\in
L_{AP}^{2}\left( \mathbb{R}_{y}^{N}\right) :Sp\left( u\right) \subset 
\mathcal{R}_{y}\right\} $, $Sp\left( u\right) $ being the spectrum of $u$,
i.e., $Sp\left( u\right) =\left\{ k\in \mathbb{R}^{N}:M\left( u\overline{%
\gamma }_{k}\right) \neq 0\right\} $ with $\gamma _{k}\left( y\right) =\exp
\left( 2i\pi k.y\right) $ $\left( y\in \mathbb{R}^{N}\right) $. Let us
consider the $H$-algebra

$A_{y}=AP_{\mathcal{R}_{y}}\left( \mathbb{R}_{y}^{N}\right) =\left\{ u\in
AP\left( \mathbb{R}_{y}^{N}\right) :Sp\left( u\right) \subset \mathcal{R}%
_{y}\right\} $, where $AP\left( \mathbb{R}_{y}^{N}\right) $ denotes the
space of almost periodic continuous complex functions on $\mathbb{R}_{y}^{N}$
(see, e.g., \cite[Chapter 5]{bib7} and \cite[Chapter 10]{bib9}). We have%
\begin{equation*}
a_{ij}\in L_{AP,\mathcal{R}_{y}}^{2}\left( \mathbb{R}_{y}^{N}\right) \subset 
\mathfrak{X}_{y}^{2}\text{ }\left( 1\leq i,j\leq N\right)
\end{equation*}%
as it can be seen by using \cite[Lemma 1]{bib16}. In view of Remark \ref%
{eq3.2}, we consider the $a_{ij}$ as functions in $\mathfrak{X}^{2}$ which
are independent of the variable $\tau $. We see that for any countable
subgoup $\mathcal{R}_{\tau }$ of $\mathbb{R}_{\tau }$, we have (\ref{eq3.1})
with $A_{y}=AP_{\mathcal{R}_{y}}\left( \mathbb{R}_{y}^{N}\right) $ and $%
A_{\tau }=AP_{\mathcal{R}_{\tau }}\left( \mathbb{R}_{\tau }\right) $ where $%
AP_{\mathcal{R}_{\tau }}\left( \mathbb{R}_{\tau }\right) =\left\{ u\in
AP\left( \mathbb{R}_{\tau }\right) :Sp\left( u\right) \subset \mathcal{R}%
_{\tau }\right\} $. Moreover, for $\mathcal{R=R}_{y}\times \mathcal{R}_{\tau
}$, $AP_{\mathcal{R}}\left( \mathbb{R}_{y}^{N}\times \mathbb{R}_{\tau
}\right) $ coincides with the closure of $AP_{\mathcal{R}_{y}}\left( \mathbb{%
R}_{y}^{N}\right) \otimes AP_{\mathcal{R}_{\tau }}\left( \mathbb{R}_{\tau
}\right) $ in $\mathcal{B}\left( \mathbb{R}_{y}^{N}\times \mathbb{R}_{\tau
}\right) $ (see \cite[Proposition 3.2]{bib17}), hence $A=$ $AP_{\mathcal{R}%
}\left( \mathbb{R}_{y}^{N}\times \mathbb{R}_{\tau }\right) $. On the other
hand, by virtue of \cite[Proposition 3.2]{bib24}, the $H$-algebra $A$ is
quasi-proper. Thus, the homogenization of (\ref{eq1.3})-(\ref{eq1.6})
follows.
\end{example}

\begin{example}
\label{ex2} Let $Y^{\prime }=\left( -\frac{1}{2},\frac{1}{2}\right) ^{N-1}$.
We study the homogenization of (\ref{eq1.3})-(\ref{eq1.6}) under the
following hypothesis:%
\begin{equation}
a_{ij}\in L^{2}\left( \mathbb{R};L_{per}^{2}\left( Y^{\prime }\right)
\right) \text{\quad }\left( 1\leq i,j\leq N\right)  \label{eq3.9}
\end{equation}%
where $L_{per}^{2}\left( Y^{\prime }\right) $ is the space of functions in $%
L_{loc}^{2}\left( \mathbb{R}^{N-1}\right) $ that are $Y^{\prime }$-periodic.
Let $\mathcal{C}_{per}\left( Y^{\prime }\right) $ denotes the space of $%
Y^{\prime }$-periodic complex continuous functions on $\mathbb{R}^{N-1}$.
Let us consider the $H$-algebra $A_{y}=\mathcal{B}_{\infty }\left( \mathbb{R}%
;\mathcal{C}_{per}\left( Y^{\prime }\right) \right) $. We recall that $%
\mathcal{B}_{\infty }\left( \mathbb{R};\mathcal{C}_{per}\left( Y^{\prime
}\right) \right) $ is the space of continuous functions $\psi :\mathbb{R}%
^{N-1}\times \mathbb{R\rightarrow C}$ such that the mapping $%
y_{N}\rightarrow \psi \left( .,y_{N}\right) $ send continuously $\mathbb{R}$
into $\mathcal{C}_{per}\left( Y^{\prime }\right) $ and $\psi \left(
.,y_{N}\right) $ has a limit in $\mathcal{C}_{per}\left( Y^{\prime }\right) $
(with the norm $\left\Vert {\small \cdot }\right\Vert _{\infty }$) as $%
\left\vert y_{N}\right\vert \rightarrow +\infty $. Under the hypothesis (\ref%
{eq3.9}), the condition (\ref{eq3.1}) is satisfied with $A=\mathcal{B}%
_{\infty }\left( \mathbb{R}_{\tau };A_{y}\right) $, where $\mathcal{B}%
_{\infty }\left( \mathbb{R}_{\tau };A_{y}\right) $ is the space of functions 
$\varphi :\mathbb{R}^{N}\times \mathbb{R\rightarrow C}$ such that the
mapping $\tau \rightarrow \varphi \left( .,\tau \right) $ send continuously $%
\mathbb{R}$ into $A_{y}$ and $\varphi \left( .,\tau \right) $ has a limit in 
$A_{y}$ (with the norm $\left\Vert {\small \cdot }\right\Vert _{\infty }$)
as $\left\vert \tau \right\vert \rightarrow +\infty $. Indeed, on one hand
the space $\mathcal{K}\left( \mathbb{R};\mathcal{C}_{per}\left( Y^{\prime
}\right) \right) $ is contained in $A_{y}=\mathcal{B}_{\infty }\left( 
\mathbb{R};\mathcal{C}_{per}\left( Y^{\prime }\right) \right) $, and $%
\mathcal{K}\left( \mathbb{R};\mathcal{C}_{per}\left( Y^{\prime }\right)
\right) $ is dense in $L^{2}\left( \mathbb{R};L_{per}^{2}\left( Y^{\prime
}\right) \right) $ as it's easily seen by\ using the fact that $\mathcal{K}%
\left( \mathbb{R}\right) $ and $\mathcal{C}_{per}\left( Y^{\prime }\right) $
are dense in $L^{2}\left( \mathbb{R}\right) $ and $L_{per}^{2}\left(
Y^{\prime }\right) $ respectively. On the other hand, let $\left( L^{2},\ell
^{\infty }\right) $ be the space of all $u\in L_{loc}^{2}\left( \mathbb{R}%
_{y}^{N}\right) $ such that 
\begin{equation*}
\left\Vert u\right\Vert _{2,\infty }=\underset{k\in \mathbb{Z}^{N}}{\sup }%
\left[ \int_{k+Y}\left\vert u\left( y\right) \right\vert ^{2}dy\right] ^{%
\frac{1}{2}}<\infty \text{,}
\end{equation*}%
where $Y=\left( -\frac{1}{2},\frac{1}{2}\right) ^{N}$. This is a Banach
space under the norm $\left\Vert .\right\Vert _{2,\infty }$. $L^{2}\left( 
\mathbb{R};L_{per}^{2}\left( Y^{\prime }\right) \right) $ is continuously
embedded in $\left( L^{2},\ell ^{\infty }\right) \left( \mathbb{R}%
^{N}\right) $ and the later is continuously embedded in $\Xi ^{2}\left( 
\mathbb{R}^{N}\right) $, thus $L^{2}\left( \mathbb{R};L_{per}^{2}\left(
Y^{\prime }\right) \right) $ is continuously embedded $\Xi ^{2}\left( 
\mathbb{R}^{N}\right) $. It follows that $L^{2}\left( \mathbb{R}%
;L_{per}^{2}\left( Y^{\prime }\right) \right) \subset \mathfrak{X}_{y}^{2}$.
Thus, by Remark \ref{rem3.2}, we see that (\ref{eq3.9}) implies (\ref{eq3.1}%
) for the H-alg\`{e}bre $A$, closure of $A_{y}\otimes \mathcal{B}_{\infty
}\left( \mathbb{R}_{\tau }\right) $ in $\mathcal{B}\left( \mathbb{R}%
_{y}^{N}\times \mathbb{R}_{\tau }\right) $ ($A_{\tau }$ is here $\mathcal{B}%
_{\infty }\left( \mathbb{R}_{\tau }\right) $, the space of continuous
functions $w:\mathbb{R}\rightarrow \mathbb{C}$ such that $w\left( \tau
\right) $ has a limit in $\mathbb{C}$ as $\left\vert \tau \right\vert
\rightarrow +\infty $). Further, $\mathcal{B}_{\infty }\left( \mathbb{R}%
_{\tau };A_{y}\right) $ coincides with the closure of $A_{y}\otimes \mathcal{%
B}_{\infty }\left( \mathbb{R}_{\tau }\right) $ in $\mathcal{B}\left( \mathbb{%
R}_{y}^{N}\times \mathbb{R}_{\tau }\right) $, hence $A=\mathcal{B}_{\infty
}\left( \mathbb{R}_{\tau };A_{y}\right) $. Moreover if we denote by\ $A_{2y}$
the closure of $\mathcal{C}_{per}\left( Y^{\prime }\right) \otimes \mathbb{C}
$ in $\mathcal{B}\left( \mathbb{R}_{y}^{N}\right) $, then the paire $\left\{
A_{y},A_{2y}\right\} $ satisfies the hypothesis (\textbf{H}) of \cite[%
Subsection 4.1]{bib25} by virtue of the proof in \cite[Corollary 4.4]{bib17}%
. But, $A_{2y}=AP_{\mathcal{R}}\left( \mathbb{R}_{y}^{N}\right) $ with $%
\mathcal{R}=\mathbb{Z}^{N-1}\times \left\{ 0\right\} $, and $A_{2}=\mathcal{B%
}_{\infty }\left( \mathbb{R}_{\tau };AP_{\mathcal{R}}\left( \mathbb{R}%
_{y}^{N}\right) \right) $ is quasi-proper (see \cite[Example 3.1]{bib24}).%
\emph{\ }Therefore, by \cite[Proposition 4.1]{bib25} we see that $A$ is
quasi-proper. Hence, the homogenization of (\ref{eq1.3})-(\ref{eq1.6})
follows by Theorems \ref{th3.1} and \ref{th3.2}.
\end{example}

\begin{example}
\label{ex3} Let us suppose that%
\begin{equation}
a_{ij}\in L_{\infty ,AP}^{2}\left( \mathbb{R}^{N}\right) \text{\quad }\left(
1\leq i,j\leq N\right) \text{,}  \label{eq3.10}
\end{equation}%
where $L_{\infty ,AP}^{2}\left( \mathbb{R}^{N}\right) $ denotes the closure
in $\left( L^{2},\ell ^{\infty }\right) \left( \mathbb{R}^{N}\right) \mathbb{%
\ }$of the space of finite sums $\sum_{finite}\varphi _{i}u_{i}$ with $%
\varphi _{i}\in \mathcal{B}_{\infty }\left( \mathbb{R}^{N}\right) $ and $%
u_{i}\in AP\left( \mathbb{R}^{N}\right) $. We have the continuous embedding
of $L^{\infty }\left( \mathbb{R}^{N}\right) $ in $\left( L^{2},\ell ^{\infty
}\right) \left( \mathbb{R}^{N}\right) $, thus 
\begin{equation*}
a_{ij}\in \left( L^{2},\ell ^{\infty }\right) \left( \mathbb{R}^{N}\right) 
\text{\qquad }\left( 1\leq i,j\leq N\right) \text{.}
\end{equation*}%
One can also see that $L_{\infty ,AP}^{2}\left( \mathbb{R}^{N}\right) $
coincides with the closure of $\mathcal{B}_{\infty }\left( \mathbb{R}%
^{N}\right) +AP\left( \mathbb{R}^{N}\right) $ in $\left( L^{2},\ell ^{\infty
}\right) \left( \mathbb{R}^{N}\right) $. Let $1\leq i,j\leq N$. By density
there exists a sequence $\left( a_{n}^{ij}\right) _{n\in \mathbb{N}}$
defined by $a_{n}^{ij}=b_{n}^{ij}+c_{n}^{ij}$ with $b_{n}^{ij}\in \mathcal{B}%
_{\infty }\left( \mathbb{R}^{N}\right) $ and $c_{n}^{ij}\in AP\left( \mathbb{%
R}^{N}\right) $ such that%
\begin{equation}
a_{n}^{ij}\rightarrow a_{ij}\text{ in }\left( L^{2},l^{\infty }\right)
\left( \mathbb{R}^{N}\right) \text{ as }n\rightarrow \infty \text{.}
\label{eq3.11}
\end{equation}%
The family $\left\{ c_{n}^{ij}:1\leq i,j\leq N\text{ and }n\in \mathbb{N}%
\right\} $ is a countable set in $AP\left( \mathbb{R}^{N}\right) $, thus by 
\cite[Proposition 5.1]{bib15} there exists a countable subgroup $\mathcal{R}$
of $\mathbb{R}^{N}$ such that 
\begin{equation*}
\left\{ c_{n}^{ij}:1\leq i,j\leq N\text{ and }n\in \mathbb{N}\right\}
\subset AP_{\mathcal{R}}\left( \mathbb{R}_{y}^{N}\right) \text{.}
\end{equation*}%
Then the sequence $\left( a_{n}^{ij}\right) _{n\in \mathbb{N}}$ lies in $%
\mathcal{B}_{\infty ,\mathcal{R}}\left( \mathbb{R}_{y}^{N}\right) $, where $%
\mathcal{B}_{\infty ,\mathcal{R}}\left( \mathbb{R}_{y}^{N}\right) $ denotes
the closure in $\mathcal{B}\left( \mathbb{R}_{y}^{N}\right) $ of $\mathcal{B}%
_{\infty }\left( \mathbb{R}_{y}^{N}\right) +AP_{\mathcal{R}}\left( \mathbb{R}%
_{y}^{N}\right) $. We denote by $L_{\infty ,\mathcal{R}}^{2}\left( \mathbb{R}%
_{y}^{N}\right) $ the closure in $\left( L^{2},\ell ^{\infty }\right) \left( 
\mathbb{R}_{y}^{N}\right) $ of $\mathcal{B}_{\infty ,\mathcal{R}}\left( 
\mathbb{R}_{y}^{N}\right) $. It follows from (\ref{eq3.11}) that 
\begin{equation}
a_{ij}\in L_{\infty ,\mathcal{R}}^{2}\left( \mathbb{R}_{y}^{N}\right) \text{ 
}\left( 1\leq i,j\leq N\right) \text{.}  \label{eq3.12}
\end{equation}%
Let $A_{y}=\mathcal{B}_{\infty ,\mathcal{R}}\left( \mathbb{R}_{y}^{N}\right) 
$ and $A_{\tau }=\mathcal{B}_{\infty }\left( \mathbb{R}_{\tau }\right) $. We
denote by $A$ the closure of $A_{y}\otimes A_{\tau }$ in $\mathcal{B}\left( 
\mathbb{R}_{y}^{N}\times \mathbb{R}_{\tau }\right) $. As $\left( L^{2},\ell
^{\infty }\right) \left( \mathbb{R}_{y}^{N}\right) $ is continuously
embedded in $\Xi ^{2}\left( \mathbb{R}^{N}\right) $, we have $L_{\infty ,%
\mathcal{R}}^{2}\left( \mathbb{R}_{y}^{N}\right) \subset \mathfrak{X}%
_{y}^{2} $. Il follows from (\ref{eq3.12}) and Remark \ref{rem3.2} that (\ref%
{eq3.1}) is satisfied for $A$. Moreover, the paire $\left\{ A_{2y}=AP_{%
\mathcal{R}}\left( \mathbb{R}_{y}^{N}\right) \text{, }A_{y}=\mathcal{B}%
_{\infty ,\mathcal{R}}\left( \mathbb{R}_{y}^{N}\right) \right\} $ satisfies
the hypothesis (\textbf{H}) of \cite[Subsection 4.1]{bib25} (see the proof
of \cite[Corollary 4.2]{bib15}). Furthermore, the H-algebra $A_{2}$ closure
of $A_{2y}\otimes A_{\tau }$ in $\mathcal{B}\left( \mathbb{R}_{y}^{N}\times 
\mathbb{R}_{\tau }\right) $ is quasi-proper (see Example \ref{ex1}). Thus,
by virtue of \cite[Proposition 4.1]{bib25} $A$ is quasi-proper. Therefore,
the homogenization of (\ref{eq1.3})-(\ref{eq1.6}) follows by Theorems \ref%
{th3.1}-\ref{th3.2}.
\end{example}

\section{Homogenization of unsteady Navier-Stokes equations in periodic
porous media}

The basic notation and hypotheses are those which are stated in Section 1,
especially in \textit{Problem} II. Throughout the present section, vector
spaces are considered over $\mathbb{R}$ and scalar functions are assumed to
take real values. Thus, for the sake of convenience, we will put $\mathcal{C}%
\left( X\right) =\mathcal{C}\left( X;\mathbb{R}\right) $, $\mathcal{B}\left(
X\right) =\mathcal{B}\left( X;\mathbb{R}\right) $, $L^{p}\left( X\right)
=L^{p}\left( X;\mathbb{R}\right) $, $H^{1}\left( X\right) =H^{1}\left( X;%
\mathbb{R}\right) $, etc., $X$ being an open set in $\mathbb{R}^{N}$.

The purpose here is to investigate the asymptotic behaviour, as $\varepsilon
\rightarrow 0$, of the solution, $\left( \mathbf{u}_{\varepsilon
},p_{\varepsilon }\right) $, of (\ref{eq1.7})-(\ref{eq1.10}) with $N=2$. As
was mentioned earlier, the hypothesis $N=2$ guarantees the unicity in (\ref%
{eq1.7})-(\ref{eq1.10}).

\subsection{\textbf{Preliminaries}}

Before we can study the asymptotic behavior of $\mathbf{u}_{\varepsilon }$
and $p_{\varepsilon }$ as $\varepsilon \rightarrow 0$, we require a few
basic results.

\begin{lemma}
\label{lem4.1} (Friedrichs inequality). There is a constant $c=c\left(
Y_{f}\right) >0$ such that 
\begin{equation}
\int_{\Omega _{\varepsilon }}\left\vert u\right\vert ^{2}dx\leq c\varepsilon
^{2}\int_{\Omega _{\varepsilon }}\left\vert \nabla u\right\vert ^{2}dx
\label{eq4.3}
\end{equation}%
for all $u\in H_{0}^{1}\left( \Omega _{\varepsilon }\right) $ and all real $%
0<\varepsilon <1$.
\end{lemma}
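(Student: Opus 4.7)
The plan is to reduce the inequality to a fixed Poincar\'{e} type inequality on the reference fluid cell $Y_{f}$ by a scaling argument, and then sum the resulting local estimates over the $\varepsilon $-periodic tiling of $\mathbb{R}^{N}$.

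First, I would establish the following cell lemma: there exists a constant $C=C(Y_{f})>0$ such that for every $v\in H^{1}(Y)$ satisfying $v=0$ almost everywhere on $\overline{Y}_{s}$, one has $\int_{Y_{f}}|v|^{2}dy\leq C\int_{Y_{f}}|\nabla _{y}v|^{2}dy$. This is a standard consequence of the Rellich-Kondrachov compactness embedding $H^{1}(Y)\hookrightarrow L^{2}(Y)$ together with the fact that the only constant function vanishing on the set $\overline{Y}_{s}$ (which has positive Lebesgue measure, since $\overline{Y}_{s}$ has non-empty interior by the smoothness of $\Gamma =\partial Y_{s}$) is the zero function; a classical contradiction argument then yields the inequality.

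Next, given $u\in H_{0}^{1}(\Omega _{\varepsilon })$, extend it by zero outside $\Omega _{\varepsilon }$ to obtain $\widetilde{u}\in H^{1}(\mathbb{R}^{N})$, where the extension is valid because $u$ has zero trace on the full boundary $\partial \Omega _{\varepsilon }=\partial \Omega \cup \varepsilon \partial \Theta \cap \Omega $. Crucially, $\widetilde{u}$ vanishes on the whole of $\varepsilon \Theta $ as well as on $\mathbb{R}^{N}\setminus \Omega $; in particular, for every $k\in \mathbb{Z}^{N}$, the function $\widetilde{u}$ vanishes almost everywhere on $\varepsilon (k+\overline{Y}_{s})$. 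For each such $k$, define $v_{k}(y)=\widetilde{u}(\varepsilon (k+y))$ for $y\in Y$; then $v_{k}\in H^{1}(Y)$ and $v_{k}=0$ on $\overline{Y}_{s}$, so the cell lemma applies:
\begin{equation*}
\int_{Y_{f}}|v_{k}(y)|^{2}dy\leq C\int_{Y_{f}}|\nabla _{y}v_{k}(y)|^{2}dy.
\end{equation*}
Performing the change of variable $x=\varepsilon (k+y)$ (so $dx=\varepsilon ^{N}dy$ and $\nabla _{y}v_{k}(y)=\varepsilon \,\nabla \widetilde{u}(\varepsilon (k+y))$) turns this into
\begin{equation*}
\int_{\varepsilon (k+Y_{f})}|\widetilde{u}|^{2}dx\leq C\,\varepsilon ^{2}\int_{\varepsilon (k+Y_{f})}|\nabla \widetilde{u}|^{2}dx.
\end{equation*}

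Finally, I would sum over all $k\in \mathbb{Z}^{N}$ for which $\varepsilon (k+Y)\cap \Omega \neq \emptyset $. Since the translated cells $\{\varepsilon (k+Y)\}_{k}$ tile $\mathbb{R}^{N}$ up to a negligible set, and since $\widetilde{u}$ vanishes on $\varepsilon \Theta \cup (\mathbb{R}^{N}\setminus \Omega )$ so that $\nabla \widetilde{u}=0$ there, the left-hand side sums to $\int_{\Omega _{\varepsilon }}|u|^{2}dx$ and the right-hand side is bounded by $C\varepsilon ^{2}\int_{\Omega _{\varepsilon }}|\nabla u|^{2}dx$, yielding (\ref{eq4.3}) with $c=C(Y_{f})$ independent of $\varepsilon $ and of $u$.

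The only non-routine step is the cell inequality and, more delicately, the handling of boundary cells for which the translated hole $\varepsilon (k+\overline{Y}_{s})$ may protrude outside $\Omega $; this is precisely why the extension-by-zero is performed on all of $\mathbb{R}^{N}$ and not merely on $\Omega $, so that the hypothesis ``$v_{k}=0$ on $\overline{Y}_{s}$'' of the cell lemma holds uniformly in $k$, regardless of whether the $k$-th cell is interior or straddles $\partial \Omega $.
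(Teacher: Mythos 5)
Your proof is correct and complete: the paper itself gives no argument for this lemma (it simply refers to \cite{bib23}), and the cell Poincar\'{e} inequality obtained by compactness, rescaled to each $\varepsilon$-cell and summed over the tiling, is exactly the classical argument (going back to Tartar's appendix in \cite{bib26}) that the cited reference uses. Your handling of the boundary cells via extension by zero to all of $\mathbb{R}^{N}$ is the right way to make the hypothesis of the cell lemma hold uniformly in $k$, and the contradiction argument for the cell inequality correctly needs only the connectedness of $Y$ and the positivity of the measure of $\overline{Y}_{s}$.
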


\begin{proof}
See \cite{bib23}
\end{proof}

Now, if $\mathbf{w=}\left( w^{k}\right) _{1\leq k\leq N}$ with $w^{k}\in
L^{p}\left( \mathcal{O}\right) $, or if $\mathbf{w=}\left( w^{ij}\right)
_{1\leq i,j\leq N}$ with $w^{ij}\in L^{p}\left( \mathcal{O}\right) $, where $%
\mathcal{O}$ is an open set in $\mathbb{R}^{N}$, we will sometimes write $%
\left\Vert \mathbf{w}\right\Vert _{L^{p}\left( \mathcal{O}\right) }$ for $%
\left\Vert \mathbf{w}\right\Vert _{L^{p}\left( \mathcal{O}\right) ^{N}}$ or
for $\left\Vert \mathbf{w}\right\Vert _{L^{p}\left( \mathcal{O}\right)
^{N\times N}}$. This abuse is convenient and in common use.

The next lemma will allow us study the behaviour of the pressure $%
p_{\varepsilon }$.

\begin{lemma}
\label{lem4.2} There exists a linear operator $R_{\varepsilon
}:H_{0}^{1}\left( \Omega \right) ^{N}\rightarrow H_{0}^{1}\left( \Omega
_{\varepsilon }\right) ^{N}$ with the following properties:

\noindent (P$_{1}$) If $\mathbf{w}\in H_{0}^{1}\left( \Omega \right) ^{N}$
and $\mathbf{w}$ is zero on $\Omega \backslash \Omega _{\varepsilon }$, then 
$R_{\varepsilon }\mathbf{w}=\mathbf{w}\mathfrak{\mid }_{\Omega _{\varepsilon
}}$.

\noindent (P$_{2}$) If $\mathbf{w}\in H_{0}^{1}\left( \Omega \right) ^{N}$
and $div\mathbf{w}=0$, then $divR_{\varepsilon }\mathbf{w}=0$.

\noindent (P$_{3}$) There is a constant $c>0$ (independent of $\mathbf{w}$
and $\varepsilon $, as well) such that%
\begin{equation}
\left\Vert R_{\varepsilon }\mathbf{w}\right\Vert _{L^{2}\left( \Omega
_{\varepsilon }\right) }\leq c\left\Vert \mathbf{w}\right\Vert _{L^{2}\left(
\Omega \right) }+c\varepsilon \left\Vert \nabla \mathbf{w}\right\Vert
_{L^{2}\left( \Omega \right) }  \label{eq4.7}
\end{equation}%
and 
\begin{equation}
\left\Vert \nabla R_{\varepsilon }\mathbf{w}\right\Vert _{L^{2}\left( \Omega
_{\varepsilon }\right) }\leq \frac{c}{\varepsilon }\left\Vert \mathbf{w}%
\right\Vert _{L^{2}\left( \Omega \right) }+c\left\Vert \nabla \mathbf{w}%
\right\Vert _{L^{2}\left( \Omega \right) }  \label{eq4.8}
\end{equation}%
for all $\mathbf{w}\in H_{0}^{1}\left( \Omega \right) ^{N}$ and all $%
0<\varepsilon <1$, where $\nabla \mathbf{w=}\left( \frac{\partial \mathbf{w}%
}{\partial x_{i}}\right) _{1\leq i\leq N}$.
\end{lemma}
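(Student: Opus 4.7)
The plan is to adapt the classical Tartar-type restriction construction used in Stokes/Navier--Stokes homogenization in perforated domains: build a local operator on the reference cell, then rescale and paste together.

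First I would construct a bounded linear operator $R : H^1(Y)^N \to H^1(Y_f)^N$ enjoying the following properties: (a) $R\mathbf{w} = 0$ on $\Gamma$; (b) $R\mathbf{w} = \mathbf{w}$ on $\partial Y$, so that cell outputs glue into a global $H^1$ function; (c) $\mathrm{div}(R\mathbf{w}) = 0$ in $Y_f$ whenever $\mathrm{div}\,\mathbf{w} = 0$ in $Y$; (d) $\|R\mathbf{w}\|_{H^1(Y_f)} \leq c\,\|\mathbf{w}\|_{H^1(Y)}$ with $c = c(Y_f)$ independent of $\mathbf{w}$. The recipe I would use is $R\mathbf{w} = \mathbf{w} - \mathbf{v}$ on $Y_f$, where $\mathbf{v} \in H^1(Y_f)^N$ solves $\mathbf{v}|_{\Gamma} = \mathbf{w}|_{\Gamma}$, $\mathbf{v}|_{\partial Y} = 0$, and $\mathrm{div}\,\mathbf{v} = \frac{1}{|Y_f|}\int_{Y_s} \mathrm{div}\,\mathbf{w}\,dy$ (a constant, arranged precisely so that the compatibility condition $\int_{Y_f}\mathrm{div}\,\mathbf{v}\,dy = \int_{\partial Y_f} \mathbf{v}\cdot\mathbf{n}\,ds$ is automatic). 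The function $\mathbf{v}$ is built as $\mathbf{v} = \mathbf{v}_1 + \mathbf{v}_2$, with $\mathbf{v}_1$ a standard $H^1$-bounded extension of the prescribed trace and $\mathbf{v}_2$ the Bogovskii right inverse of the divergence applied to the residual divergence in $L^2_0(Y_f)$.

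Next I would rescale and glue. For each $k \in \mathbb{Z}^N$ with $\varepsilon(k + \overline Y) \subset \Omega$, define on $\varepsilon(k + Y_f)$
\[
(R_\varepsilon \mathbf{w})(x) = R\bigl[\mathbf{w}(\varepsilon(k+\,\cdot\,))\bigr]\!\left(\tfrac{x}{\varepsilon} - k\right),
\]
while on cells straddling $\partial\Omega$ (which, by the very definition of $\Omega_\varepsilon$, do not meet $\varepsilon\Theta$) simply set $R_\varepsilon\mathbf{w} = \mathbf{w}$. Property (b) forces the cell pieces to agree across internal interfaces, while (a) gives $R_\varepsilon\mathbf{w} = 0$ on $\partial(\varepsilon\Theta)\cap\Omega$; combined with $\mathbf{w}\in H_0^1(\Omega)^N$, this yields $R_\varepsilon\mathbf{w}\in H_0^1(\Omega_\varepsilon)^N$. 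Then (P$_1$) is immediate from (a) applied cellwise when $\mathbf{w} = 0$ on $\Omega\setminus\Omega_\varepsilon$ (the boundary data of the auxiliary problem vanish, so $\mathbf{v}=0$); (P$_2$) follows from (c) since $\int_{Y_s}\mathrm{div}\,\mathbf{w}(\varepsilon(k+\cdot))\,dy = 0$ on each cell when $\mathrm{div}\,\mathbf{w}=0$. The quantitative bounds \eqref{eq4.7}--\eqref{eq4.8} fall out of (d) applied on each cell after the change of variables $y = x/\varepsilon - k$, which rescales $L^2$-norms by $\varepsilon^{N/2}$ and produces an extra $\varepsilon^{-1}$ on gradients; summation over $k$ gives the stated factors.

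The main obstacle is Step 1: producing the cell-level operator with the continuity estimate (d) uniform in $\mathbf{w}$. This amounts to solving the divergence equation in $Y_f$ with prescribed traces on the two disjoint Lipschitz components $\Gamma$ and $\partial Y$ of $\partial Y_f$, which is precisely the regime where the Bogovskii right inverse of the divergence is indispensable; its $L^2_0 \to H^1_0$ boundedness on the Lipschitz domain $Y_f$ (with $\Gamma$ in fact smooth) is the crucial technical input. Once this is in place, everything else is scaling bookkeeping.
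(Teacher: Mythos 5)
The paper does not actually prove Lemma \ref{lem4.2}: its ``proof'' is the single line ``See \cite{bib23}'', and the construction in that reference (going back to Tartar's appendix \cite{bib28} and Allaire \cite{bib2}) is precisely the cell-by-cell restriction operator you describe --- a reference-cell operator $R=\mathrm{id}-\mathbf{v}$ with $\mathbf{v}$ built from a trace extension plus a Bogovskii correction, then rescaled and glued, with the factors $\varepsilon^{\pm 1}$ in (\ref{eq4.7})--(\ref{eq4.8}) coming from the change of variables. So your approach is the right one and matches what the citation points to.

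Two points need repair, one cosmetic and one substantive. First, the compensating constant divergence has the wrong sign: with $\mathbf{v}|_{\Gamma}=\mathbf{w}|_{\Gamma}$, $\mathbf{v}|_{\partial Y}=0$ and $n$ the outward normal of $Y_{f}$ (which equals $-n_{Y_{s}}$ on $\Gamma$), the compatibility condition reads $\int_{Y_{f}}\mathrm{div}\,\mathbf{v}\,dy=\int_{\Gamma}\mathbf{w}\cdot n\,ds=-\int_{Y_{s}}\mathrm{div}\,\mathbf{w}\,dy$, so the constant must be $-\frac{1}{\left\vert Y_{f}\right\vert }\int_{Y_{s}}\mathrm{div}\,\mathbf{w}\,dy$; this does not affect (P$_{2}$) but your stated choice makes the Bogovskii problem unsolvable for general $\mathbf{w}$. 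Second, and more seriously, your treatment of the boundary cells rests on a false premise: in this paper $\Omega _{\varepsilon }=\Omega \backslash \varepsilon \Theta $ with $\Theta =\cup _{k}\left( k+\overline{Y}_{s}\right) $, so \emph{every} obstacle meeting $\Omega $ is removed, including those in cells straddling $\partial \Omega $. On such a cell, setting $R_{\varepsilon }\mathbf{w}=\mathbf{w}$ does not vanish on the portion of $\partial \left( \varepsilon \left( k+Y_{s}\right) \right) $ lying inside $\Omega $, so the output fails to belong to $H_{0}^{1}\left( \Omega _{\varepsilon }\right) ^{N}$. You must either adapt the local construction to the truncated cells $\varepsilon\left( k+Y\right) \cap \Omega $ (delicate, since the Lipschitz constants of these intersections are not uniform in $\varepsilon $ and $k$), or adopt the standard convention --- implicit in \cite{bib23} and much of the literature --- of perforating only those cells with $\varepsilon \left( k+\overline{Y}\right) \subset \Omega $, in which case your argument goes through verbatim. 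As written, the gluing step is not justified for the domain as the paper defines it.
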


\begin{proof}
See \cite{bib23}
\end{proof}

\begin{lemma}
\label{lem4.3} Assume that $\mathbf{f}$, $\mathbf{f}^{\prime }\in
L^{2}\left( 0,T;L^{2}\left( \Omega \right) ^{N}\right) $ and $\mathbf{f}%
\left( 0\right) \in L^{2}\left( \Omega \right) ^{N}$. Let $\left( \mathbf{u}%
_{\varepsilon },p_{\varepsilon }\right) $ be the solution of (\ref{eq1.7})-(%
\ref{eq1.10}). Let $\mathbf{u}_{\varepsilon }$ be identified with its
extension by zero in $\left( \Omega \backslash \Omega _{\varepsilon }\right)
\times ]0,T[$, which lies in $L^{2}\left( 0,T;H_{0}^{1}\left( \Omega \right)
^{N}\right) $. The following assertions are true.

\noindent There is a constant $C>0$ such that%
\begin{equation}
\left\Vert \nabla \mathbf{u}_{\varepsilon }\right\Vert _{L^{2}\left(
Q\right) }\leq C\varepsilon \text{, }\left\Vert \mathbf{u}_{\varepsilon
}\right\Vert _{L^{2}\left( Q\right) }\leq C\varepsilon ^{2}\text{ and }%
\left\Vert \frac{\partial \mathbf{u}_{\varepsilon }}{\partial t}\right\Vert
_{L^{2}\left( Q\right) }\leq C\text{ \quad }\left( 0<\varepsilon <1\right) 
\text{.}  \label{eq4.11}
\end{equation}

\noindent For each $0<\varepsilon <1$, there is a unique $\overline{p}%
_{\varepsilon }\in L^{2}\left( 0,T;L^{2}\left( \Omega \right) \mathfrak{/}%
\mathbb{R}\right) $ such that%
\begin{equation}
\int_{\Omega }\overline{p}_{\varepsilon }div\mathbf{w}dx=\int_{\Omega
_{\varepsilon }}p_{\varepsilon }div\left( R_{\varepsilon }\mathbf{w}\right)
dx\text{\quad }\left( \mathbf{w}\in H_{0}^{1}\left( \Omega \right)
^{N}\right)  \label{eq4.12}
\end{equation}%
for almost all $t\in \left[ 0,T\right] $.

Furthermore, 
\begin{equation}
\frac{\partial \overline{p}_{\varepsilon }}{\partial x_{i}}{\LARGE \mid }%
_{\Omega _{\varepsilon }\times ]0,T[}=\frac{\partial p_{\varepsilon }}{%
\partial x_{i}}\text{\quad }\left( 1\leq i\leq N\right)  \label{eq4.13}
\end{equation}%
in the sense of distributions on $\Omega _{\varepsilon }\times ]0,T[$,%
\begin{equation}
\left\Vert \nabla \overline{p}_{\varepsilon }\right\Vert _{L^{2}\left(
0,T;H^{-1}\left( \Omega \right) ^{N}\right) }\leq C\text{ and }\left\Vert 
\overline{p}_{\varepsilon }\right\Vert _{L^{2}\left( Q\right) }\leq C\text{
\qquad \qquad \qquad }\left( 0<\varepsilon <1\right)  \label{eq4.14}
\end{equation}%
where the constant $C>0$ is independent of $\varepsilon $.
\end{lemma}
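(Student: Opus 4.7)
\emph{Plan.} I would handle the three assertions of the lemma in sequence, the scaling of every estimate being governed by how a factor $\varepsilon$ coming from Friedrichs (Lemma~\ref{lem4.1}) cancels or survives against the $\varepsilon^{-1}$-blow-up of $\nabla R_\varepsilon\mathbf{w}$ (Lemma~\ref{lem4.2}, property (P$_3$)).

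\emph{Step 1: velocity bounds (\ref{eq4.11}).} Following the scheme of Proposition~\ref{pr2.1}, I test (\ref{eq1.7}) with $\mathbf{u}_\varepsilon$: the convective term vanishes by (\ref{eq2.4}), and the forcing pairing satisfies $|(\mathbf{f},\mathbf{u}_\varepsilon)|\le c\varepsilon\,\|\mathbf{f}\|_{L^2}\|\nabla\mathbf{u}_\varepsilon\|_{L^2}$ by Friedrichs. A Young-and-integrate argument then gives $\|\mathbf{u}_\varepsilon\|_{L^\infty(0,T;L^2)}\le C\varepsilon$ and $\|\nabla\mathbf{u}_\varepsilon\|_{L^2(Q)}\le C\varepsilon$; one more Friedrichs yields $\|\mathbf{u}_\varepsilon\|_{L^2(Q)}\le C\varepsilon^2$. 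For $\partial_t\mathbf{u}_\varepsilon$ I differentiate (\ref{eq1.7}) in $t$, test with $\mathbf{u}_\varepsilon'$, and reproduce the Gronwall chain (\ref{eq2.21})--(\ref{eq2.24}) with $\nu$ in place of $\alpha$; the initial value satisfies $|\mathbf{u}_\varepsilon'(0)|\le|\mathbf{f}(0)|$ uniformly in $\varepsilon$, and the exponential factor stays $O(1)$ because $\|\mathbf{u}_\varepsilon\|_{L^2(0,T;V)}=O(\varepsilon)$. This gives $\|\mathbf{u}_\varepsilon'\|_{L^2(0,T;V)}\le C$, hence $\|\partial_t\mathbf{u}_\varepsilon\|_{L^2(Q)}\le C$.

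\emph{Step 2: construction of $\overline{p}_\varepsilon$ and identities (\ref{eq4.12})--(\ref{eq4.13}).} For a.e.\ $t$, the functional $L_t\colon\mathbf{w}\mapsto\int_{\Omega_\varepsilon}p_\varepsilon(t)\,\mathrm{div}(R_\varepsilon\mathbf{w})\,dx$ is continuous on $H_0^1(\Omega)^N$ and, by property (P$_2$), vanishes whenever $\mathrm{div}\,\mathbf{w}=0$. Since $\mathrm{div}\colon H_0^1(\Omega)^N\to L^2(\Omega)/\mathbb{R}$ is surjective on the smooth bounded set $\Omega$, the closed-range theorem (Ne\v{c}as) produces a unique $\overline{p}_\varepsilon(t)\in L^2(\Omega)/\mathbb{R}$ representing $L_t$, which is exactly (\ref{eq4.12}). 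For (\ref{eq4.13}), any $\mathbf{w}\in\mathcal{D}(\Omega_\varepsilon)^N$, extended by zero to $\widetilde{\mathbf{w}}\in H_0^1(\Omega)^N$, satisfies $R_\varepsilon\widetilde{\mathbf{w}}=\mathbf{w}$ by (P$_1$), so (\ref{eq4.12}) localises to $\int_{\Omega_\varepsilon}\overline{p}_\varepsilon\,\mathrm{div}\,\mathbf{w}\,dx=\int_{\Omega_\varepsilon}p_\varepsilon\,\mathrm{div}\,\mathbf{w}\,dx$, i.e.\ $\nabla\overline{p}_\varepsilon=\nabla p_\varepsilon$ in $\mathcal{D}'(\Omega_\varepsilon\times{]0,T[})$.

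\emph{Step 3: pressure bounds (\ref{eq4.14}) and main obstacle.} Multiplying (\ref{eq1.7}) by $R_\varepsilon\mathbf{w}$ and using (\ref{eq4.12}) gives
\[
\int_\Omega\overline{p}_\varepsilon\,\mathrm{div}\,\mathbf{w}\,dx
=\int_{\Omega_\varepsilon}\partial_t\mathbf{u}_\varepsilon\cdot R_\varepsilon\mathbf{w}
+\nu\,(\nabla\mathbf{u}_\varepsilon,\nabla R_\varepsilon\mathbf{w})
+b(\mathbf{u}_\varepsilon,\mathbf{u}_\varepsilon,R_\varepsilon\mathbf{w})
-(\mathbf{f},R_\varepsilon\mathbf{w}).
\]
Combining (P$_3$) with Poincar\'e on $\Omega$, $\|R_\varepsilon\mathbf{w}\|_{L^2}\le C\|\mathbf{w}\|_{H_0^1}$ and $\|\nabla R_\varepsilon\mathbf{w}\|_{L^2}\le(C/\varepsilon)\|\mathbf{w}\|_{H_0^1}$. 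The $\partial_t\mathbf{u}$ and $\mathbf{f}$ contributions are $O(1)$ in $L^2(0,T)$ by (\ref{eq4.11}); the viscous term is $O((1/\varepsilon)\|\nabla\mathbf{u}_\varepsilon\|_{L^2})$, which yields $O(1)$ after $L^2(0,T)$-integration because $\|\nabla\mathbf{u}_\varepsilon\|_{L^2(Q)}=O(\varepsilon)$. The delicate step is the nonlinear term: a crude application of (\ref{eq2.3}) produces $O(\|\mathbf{u}_\varepsilon\|_V^2/\varepsilon)\|\mathbf{w}\|_{H_0^1}$, which does not close uniformly in $\varepsilon$. The fix is to use the 2D refinement (\ref{eq2.7}) after rewriting $b(\mathbf{u}_\varepsilon,\mathbf{u}_\varepsilon,R_\varepsilon\mathbf{w})=-b(\mathbf{u}_\varepsilon,R_\varepsilon\mathbf{w},\mathbf{u}_\varepsilon)$ via (\ref{eq2.5}), obtaining the bound $\sqrt{2}\,|\mathbf{u}_\varepsilon|\,\|\mathbf{u}_\varepsilon\|\,\|R_\varepsilon\mathbf{w}\|$; then the $O(\varepsilon)$ Friedrichs decay of $\|\mathbf{u}_\varepsilon\|_{L^\infty(0,T;L^2)}$ from Step~1 exactly cancels the $O(1/\varepsilon)$ growth of $\|R_\varepsilon\mathbf{w}\|_{H_0^1}$, leaving $C\|\mathbf{u}_\varepsilon(t)\|_V\|\mathbf{w}\|_{H_0^1}$ whose $L^2(0,T)$-norm is $C\|\mathbf{u}_\varepsilon\|_{L^2(0,T;V)}=O(\varepsilon)=O(1)$. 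Collecting the four estimates yields $\|\nabla\overline{p}_\varepsilon\|_{L^2(0,T;H^{-1}(\Omega)^N)}\le C$, and Ne\v{c}as' inequality $\|q\|_{L^2(\Omega)/\mathbb{R}}\le C(\Omega)\|\nabla q\|_{H^{-1}(\Omega)^N}$, applied a.e.\ in $t$ and integrated, delivers $\|\overline{p}_\varepsilon\|_{L^2(Q)}\le C$.
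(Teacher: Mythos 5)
Your proposal is correct and follows essentially the same route as the paper's proof: the energy/Friedrichs estimates for (\ref{eq4.11}) (including differentiating the weak formulation in $t$ and running a Gronwall argument for $\partial _{t}\mathbf{u}_{\varepsilon }$, with the exponential controlled by $\left\Vert \nabla \mathbf{u}_{\varepsilon }\right\Vert _{L^{2}\left( Q\right) }=O\left( \varepsilon \right) $), the de Rham/Ne\v{c}as construction of $\overline{p}_{\varepsilon }$ through the restriction operator $R_{\varepsilon }$ together with the localization via (P$_{1}$) for (\ref{eq4.13}), and the same key treatment of the trilinear term via (\ref{eq2.5}) and (\ref{eq2.7}), in which the $O\left( \varepsilon \right) $ decay of $\mathbf{u}_{\varepsilon }$ absorbs the $O\left( 1/\varepsilon \right) $ growth of $\nabla R_{\varepsilon }\mathbf{w}$. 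The only cosmetic difference is that you construct $\overline{p}_{\varepsilon }\left( t\right) $ pointwise in $t$ whereas the paper works with the time-integrated functional $F_{\varepsilon }$; the estimates are identical.
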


\begin{proof}
Let us prove the inequalities in (\ref{eq4.11}). By (\ref{eq1.7}) we have 
\begin{equation}
\left( \mathbf{u}_{\varepsilon }^{\prime }\left( t\right) ,\mathbf{v}\right)
+\nu \int_{\Omega _{\varepsilon }}\nabla \mathbf{u}_{\varepsilon }\left(
t\right) {\small \cdot }\nabla \mathbf{v}dx+b\left( \mathbf{u}_{\varepsilon
}\left( t\right) ,\mathbf{u}_{\varepsilon }\left( t\right) \mathbf{,v}%
\right) =\int_{\Omega _{\varepsilon }}\mathbf{f}\left( t\right) {\small %
\cdot }\mathbf{v}dx  \label{eq4.15}
\end{equation}%
for all $\mathbf{v}\in H_{0}^{1}\left( \Omega _{\varepsilon }\right) ^{N}$
with $div\mathbf{v}=0$, where the dot stands for the usual Euclidean inner
product. Choosing the particular test function $\mathbf{v}=\mathbf{u}%
_{\varepsilon }\left( t\right) $ and noting that%
\begin{equation*}
b\left( \mathbf{u}_{\varepsilon }\left( t\right) ,\mathbf{u}_{\varepsilon
}\left( t\right) \mathbf{,u}_{\varepsilon }\left( t\right) \right) =0\text{
(see, e.g., \cite[p.163]{bib31}),}
\end{equation*}%
we get immediately%
\begin{equation*}
\frac{d}{dt}\left\Vert \mathbf{u}_{\varepsilon }\left( t\right) \right\Vert
_{L^{2}\left( \Omega _{\varepsilon }\right) }^{2}+2\nu \left\Vert \nabla 
\mathbf{u}_{\varepsilon }\left( t\right) \right\Vert _{L^{2}\left( \Omega
_{\varepsilon }\right) }^{2}=2\int_{\Omega _{\varepsilon }}\mathbf{f}\left(
t\right) {\small \cdot }\mathbf{u}_{\varepsilon }\left( t\right) dx\text{ in 
}]0,T[\text{.}
\end{equation*}%
Integrating on $\left[ 0,t\right] $ (with $t\in \left[ 0,T\right] $) the
preceding equality, we arrive at%
\begin{equation}
\left\Vert \mathbf{u}_{\varepsilon }\left( t\right) \right\Vert
_{L^{2}\left( \Omega _{\varepsilon }\right) }^{2}+2\nu \left\Vert \nabla 
\mathbf{u}_{\varepsilon }\right\Vert _{L^{2}\left( 0,T;L^{2}\left( \Omega
_{\varepsilon }\right) \right) }^{2}\leq 2\left\Vert \mathbf{f}\right\Vert
_{L^{2}\left( Q\right) }\left\Vert \mathbf{u}_{\varepsilon }\right\Vert
_{L^{2}\left( 0,T;L^{2}\left( \Omega _{\varepsilon }\right) \right) }\text{
in }]0,T[\text{.}  \label{eq4.15a}
\end{equation}%
From (\ref{eq4.15a}) we get 
\begin{equation*}
\nu \left\Vert \nabla \mathbf{u}_{\varepsilon }\right\Vert _{L^{2}\left(
0,T;L^{2}\left( \Omega _{\varepsilon }\right) \right) }^{2}\leq \left\Vert 
\mathbf{f}\right\Vert _{L^{2}\left( Q\right) }\left\Vert \mathbf{u}%
_{\varepsilon }\right\Vert _{L^{2}\left( 0,T;L^{2}\left( \Omega
_{\varepsilon }\right) \right) }\text{.}
\end{equation*}%
Therefore, using Lemma \ref{lem4.1} in the preceding inequality leads to%
\begin{equation}
\left\Vert \nabla \mathbf{u}_{\varepsilon }\right\Vert _{L^{2}\left(
0,T;L^{2}\left( \Omega _{\varepsilon }\right) \right) }\leq \frac{%
c\varepsilon }{\nu }\left\Vert \mathbf{f}\right\Vert _{L^{2}\left( Q\right) }%
\text{ and }\left\Vert \mathbf{u}_{\varepsilon }\right\Vert _{L^{2}\left(
0,T;L^{2}\left( \Omega _{\varepsilon }\right) \right) }\leq \frac{%
c^{2}\varepsilon ^{2}}{\nu }\left\Vert \mathbf{f}\right\Vert _{L^{2}\left(
Q\right) }\text{,}  \label{eq4.15b}
\end{equation}%
$c$ being the constant in (\ref{eq4.3}). On the other hand, let us
differentiate (\ref{eq4.15}) in the distribution sense on $]0,T[$. We have%
\begin{equation*}
\left( \mathbf{u}_{\varepsilon }^{\prime \prime }\left( t\right) ,\mathbf{v}%
\right) +\nu \int_{\Omega _{\varepsilon }}\nabla \mathbf{u}_{\varepsilon
}^{\prime }\left( t\right) {\small \cdot }\nabla \mathbf{v}dx+b\left( 
\mathbf{u}_{\varepsilon }^{\prime }\left( t\right) ,\mathbf{u}_{\varepsilon
}\left( t\right) \mathbf{,v}\right) +b\left( \mathbf{u}_{\varepsilon }\left(
t\right) ,\mathbf{u}_{\varepsilon }^{\prime }\left( t\right) \mathbf{,v}%
\right) =\int_{\Omega _{\varepsilon }}\mathbf{f}^{\prime }\left( t\right) 
{\small \cdot }\mathbf{v}dx
\end{equation*}%
for all $\mathbf{v}\in H_{0}^{1}\left( \Omega _{\varepsilon }\right) ^{N}$
with $div\mathbf{v}=0$ and in particular for $\mathbf{v}=\mathbf{u}%
_{\varepsilon }^{\prime }\left( t\right) $, 
\begin{equation}
\left( \mathbf{u}_{\varepsilon }^{\prime \prime }\left( t\right) ,\mathbf{u}%
_{\varepsilon }^{\prime }\left( t\right) \right) +\nu \left\Vert \nabla 
\mathbf{u}_{\varepsilon }^{\prime }\left( t\right) \right\Vert _{L^{2}\left(
\Omega _{\varepsilon }\right) }^{2}+b\left( \mathbf{u}_{\varepsilon
}^{\prime }\left( t\right) ,\mathbf{u}_{\varepsilon }\left( t\right) \mathbf{%
,u}_{\varepsilon }^{\prime }\left( t\right) \right) =\int_{\Omega
_{\varepsilon }}\mathbf{f}^{\prime }\left( t\right) {\small \cdot }\mathbf{u}%
_{\varepsilon }^{\prime }\left( t\right) dx\text{.}  \label{eq4.15c}
\end{equation}%
In fact, since $\mathbf{f}$, $\mathbf{f}^{\prime }\in L^{2}\left(
0,T;L^{2}\left( \Omega \right) ^{N}\right) $ and $\mathbf{f}\left( 0\right)
\in L^{2}\left( \Omega \right) ^{N}$, $\mathbf{u}_{\varepsilon }^{\prime }$
belongs to $L^{2}\left( 0,T;V\right) \cap L^{\infty }\left( 0,T;H\right) $
by virtue of Lemma \ref{lem2.2}, and further we recall that $b\left( \mathbf{%
u}_{\varepsilon }\left( t\right) ,\mathbf{u}_{\varepsilon }^{\prime }\left(
t\right) \mathbf{,u}_{\varepsilon }^{\prime }\left( t\right) \right) =0$.
Moreover, by (\ref{eq4.15}) we get%
\begin{equation*}
\left\Vert \mathbf{u}_{\varepsilon }^{\prime }\left( t\right) \right\Vert
_{L^{2}\left( \Omega _{\varepsilon }\right) }^{2}+\nu \int_{\Omega
_{\varepsilon }}\nabla \mathbf{u}_{\varepsilon }\left( t\right) {\small %
\cdot }\nabla \mathbf{u}_{\varepsilon }^{\prime }\left( t\right) dx+b\left( 
\mathbf{u}_{\varepsilon }\left( t\right) ,\mathbf{u}_{\varepsilon }\left(
t\right) \mathbf{,u}_{\varepsilon }^{\prime }\left( t\right) \right)
=\int_{\Omega _{\varepsilon }}\mathbf{f}\left( t\right) {\small \cdot }%
\mathbf{u}_{\varepsilon }^{\prime }\left( t\right) dx
\end{equation*}%
and in particular%
\begin{equation*}
\left\Vert \mathbf{u}_{\varepsilon }^{\prime }\left( 0\right) \right\Vert
_{L^{2}\left( \Omega _{\varepsilon }\right) }^{2}=\int_{\Omega _{\varepsilon
}}\mathbf{f}\left( 0\right) {\small \cdot }\mathbf{u}_{\varepsilon }^{\prime
}\left( 0\right) dx\text{.}
\end{equation*}%
This implies%
\begin{equation}
\left\Vert \mathbf{u}_{\varepsilon }^{\prime }\left( 0\right) \right\Vert
_{L^{2}\left( \Omega _{\varepsilon }\right) }\leq \left\Vert \mathbf{f}%
\left( 0\right) \right\Vert _{L^{2}\left( \Omega \right) }\text{.}
\label{eq4.16}
\end{equation}%
On the other hand, by (\ref{eq4.15c}) we have%
\begin{equation}
\frac{d}{dt}\left\Vert \mathbf{u}_{\varepsilon }^{\prime }\left( t\right)
\right\Vert _{L^{2}\left( \Omega _{\varepsilon }\right) }^{2}+2\nu
\left\Vert \nabla \mathbf{u}_{\varepsilon }^{\prime }\left( t\right)
\right\Vert _{L^{2}\left( \Omega _{\varepsilon }\right) }^{2}+2b\left( 
\mathbf{u}_{\varepsilon }^{\prime }\left( t\right) ,\mathbf{u}_{\varepsilon
}\left( t\right) \mathbf{,u}_{\varepsilon }^{\prime }\left( t\right) \right)
=2\int_{\Omega _{\varepsilon }}\mathbf{f}^{\prime }\left( t\right) {\small %
\cdot }\mathbf{u}_{\varepsilon }^{\prime }\left( t\right) dx\text{.}
\label{eq4.16a}
\end{equation}%
Further, by virtue of (\ref{eq2.7}) of Lemma \ref{lem2.1}, we have 
\begin{equation*}
\left\vert 2b\left( \mathbf{u}_{\varepsilon }^{\prime }\left( t\right) ,%
\mathbf{u}_{\varepsilon }\left( t\right) \mathbf{,u}_{\varepsilon }^{\prime
}\left( t\right) \right) \right\vert \leq 2^{\frac{3}{2}}\left\Vert \mathbf{u%
}_{\varepsilon }^{\prime }\left( t\right) \right\Vert _{L^{2}\left( \Omega
_{\varepsilon }\right) }\left\Vert \nabla \mathbf{u}_{\varepsilon }^{\prime
}\left( t\right) \right\Vert _{L^{2}\left( \Omega _{\varepsilon }\right)
}\left\Vert \nabla \mathbf{u}_{\varepsilon }\left( t\right) \right\Vert
_{L^{2}\left( \Omega _{\varepsilon }\right) }
\end{equation*}%
\begin{equation*}
\leq \nu \left\Vert \nabla \mathbf{u}_{\varepsilon }^{\prime }\left(
t\right) \right\Vert _{L^{2}\left( \Omega _{\varepsilon }\right) }^{2}+\frac{%
2}{\nu }\left\Vert \mathbf{u}_{\varepsilon }^{\prime }\left( t\right)
\right\Vert _{L^{2}\left( \Omega _{\varepsilon }\right) }^{2}\left\Vert
\nabla \mathbf{u}_{\varepsilon }\left( t\right) \right\Vert _{L^{2}\left(
\Omega _{\varepsilon }\right) }^{2}\text{.}
\end{equation*}%
Moreover, 
\begin{equation*}
2\left\vert \int_{\Omega _{\varepsilon }}\mathbf{f}^{\prime }\left( t\right) 
{\small \cdot }\mathbf{u}_{\varepsilon }^{\prime }\left( t\right)
dx\right\vert \leq 2\left\Vert \mathbf{f}^{\prime }\left( t\right)
\right\Vert _{L^{2}\left( \Omega \right) }\left\Vert \mathbf{u}_{\varepsilon
}^{\prime }\left( t\right) \right\Vert _{L^{2}\left( \Omega _{\varepsilon
}\right) }
\end{equation*}%
\begin{equation*}
\leq 2c\varepsilon \left\Vert \mathbf{f}^{\prime }\left( t\right)
\right\Vert _{L^{2}\left( \Omega \right) }\left\Vert \nabla \mathbf{u}%
_{\varepsilon }^{\prime }\left( t\right) \right\Vert _{L^{2}\left( \Omega
_{\varepsilon }\right) }
\end{equation*}%
\begin{equation*}
\leq \frac{c^{2}\varepsilon ^{2}}{\nu }\left\Vert \mathbf{f}^{\prime }\left(
t\right) \right\Vert _{L^{2}\left( \Omega \right) }^{2}+\nu \left\Vert
\nabla \mathbf{u}_{\varepsilon }^{\prime }\left( t\right) \right\Vert
_{L^{2}\left( \Omega _{\varepsilon }\right) }^{2}\text{,}
\end{equation*}%
where $c>0$ is the constant in (\ref{eq4.3}). Therefore, by (\ref{eq4.16a})
we get 
\begin{equation*}
\frac{d}{dt}\left\Vert \mathbf{u}_{\varepsilon }^{\prime }\left( t\right)
\right\Vert _{L^{2}\left( \Omega _{\varepsilon }\right) }^{2}-\frac{2}{\nu }%
\left\Vert \mathbf{u}_{\varepsilon }^{\prime }\left( t\right) \right\Vert
_{L^{2}\left( \Omega _{\varepsilon }\right) }^{2}\left\Vert \nabla \mathbf{u}%
_{\varepsilon }\left( t\right) \right\Vert _{L^{2}\left( \Omega
_{\varepsilon }\right) }^{2}\leq \frac{c^{2}\varepsilon ^{2}}{\nu }%
\left\Vert \mathbf{f}^{\prime }\left( t\right) \right\Vert _{L^{2}\left(
\Omega \right) }^{2}\text{.}
\end{equation*}%
Hence, 
\begin{equation*}
\frac{d}{dt}\left\{ \left\Vert \mathbf{u}_{\varepsilon }^{\prime }\left(
t\right) \right\Vert _{L^{2}\left( \Omega _{\varepsilon }\right) }^{2}\exp
\left( -\int_{0}^{t}\frac{2}{\nu }\left\Vert \nabla \mathbf{u}_{\varepsilon
}\left( s\right) \right\Vert _{L^{2}\left( \Omega _{\varepsilon }\right)
}^{2}ds\right) \right\} \leq \frac{c^{2}\varepsilon ^{2}}{\nu }\left\Vert 
\mathbf{f}^{\prime }\left( t\right) \right\Vert _{L^{2}\left( \Omega \right)
}^{2}\text{.}
\end{equation*}%
In view of (\ref{eq4.16}), integrating the preceding inequality on $\left[
0,t\right] $ (with $t\in \left[ 0,T\right] $) and using the first inequality
of (\ref{eq4.15b}), one quickly arrives at 
\begin{equation}
\left\Vert \mathbf{u}_{\varepsilon }^{\prime }\left( t\right) \right\Vert
_{L^{2}\left( \Omega _{\varepsilon }\right) }^{2}\leq \left\{ \left\Vert 
\mathbf{f}\left( 0\right) \right\Vert _{L^{2}\left( \Omega \right) }^{2}+%
\frac{c^{2}\varepsilon ^{2}}{\nu }\left\Vert \mathbf{f}^{\prime }\right\Vert
_{L^{2}\left( Q\right) }^{2}\right\} \exp \left( \frac{2Tc^{2}\varepsilon
^{2}}{\nu ^{3}}\left\Vert \mathbf{f}\right\Vert _{L^{2}\left( Q\right)
}^{2}\right) \text{.}  \label{eq4.16b}
\end{equation}%
It follows from (\ref{eq4.15b}), (\ref{eq4.16}) and (\ref{eq4.16b}) that (%
\ref{eq4.11}) holds for all $0<\varepsilon <1$ with an appropriate constant $%
C>0$.

Now, let us prove (\ref{eq4.12})-(\ref{eq4.14}). By (\ref{eq1.7}) it is
clear that%
\begin{equation}
\int_{\Omega _{\varepsilon }}\mathbf{u}_{\varepsilon }^{\prime }\left(
t\right) {\small \cdot }\mathbf{v}dx+\nu \int_{\Omega _{\varepsilon }}\nabla 
\mathbf{u}_{\varepsilon }\left( t\right) {\small \cdot }\nabla \mathbf{v}%
dx+b\left( \mathbf{u}_{\varepsilon }\left( t\right) ,\mathbf{u}_{\varepsilon
}\left( t\right) \mathbf{,v}\right) -\int_{\Omega _{\varepsilon
}}p_{\varepsilon }div\mathbf{v}dx=\int_{\Omega _{\varepsilon }}\mathbf{f}%
\left( t\right) {\small \cdot }\mathbf{v}dx  \label{eq4.17}
\end{equation}%
for almost all $t\in ]0,T[$ and for all $\mathbf{v}\in H_{0}^{1}\left(
\Omega _{\varepsilon }\right) ^{N}$. For fixed $0<\varepsilon <1$, let us
consider the mapping $F_{\varepsilon }$ of $L^{2}\left( 0,T;H_{0}^{1}\left(
\Omega \right) ^{N}\right) $ into $\mathbb{R}$ defined by 
\begin{equation}
F_{\varepsilon }\left( \mathbf{w}\right) =-\int_{0}^{T}\int_{\Omega
_{\varepsilon }}p_{\varepsilon }div\left( R_{\varepsilon }\mathbf{w}\right)
dx\text{ for all }\mathbf{w\in }L^{2}\left( 0,T;H_{0}^{1}\left( \Omega
\right) ^{N}\right) \text{,}  \label{eq4.17a}
\end{equation}%
where $R_{\varepsilon }$ is the restriction operator of Lemma \ref{lem4.2}.
It is straightforward that $F_{\varepsilon }$ is a continuous linear form on 
$L^{2}\left( 0,T;H_{0}^{1}\left( \Omega \right) ^{N}\right) $ and 
\begin{equation*}
\begin{array}{c}
\left\langle F_{\varepsilon },\mathbf{w}\right\rangle
=\int_{0}^{T}\int_{\Omega _{\varepsilon }}\mathbf{u}_{\varepsilon }^{\prime }%
{\small \cdot R}_{\varepsilon }\mathbf{w}dxdt+\nu \int_{0}^{T}\int_{\Omega
_{\varepsilon }}\nabla \mathbf{u}_{\varepsilon }{\small \cdot }\nabla
R_{\varepsilon }\mathbf{w}dxdt \\ 
+\int_{0}^{T}b\left( \mathbf{u}_{\varepsilon }\left( t\right) ,\mathbf{u}%
_{\varepsilon }\left( t\right) \mathbf{,}R_{\varepsilon }\mathbf{w}\left(
t\right) \right) dt-\int_{0}^{T}\int_{\Omega _{\varepsilon }}\mathbf{f}%
{\small \cdot }R_{\varepsilon }\mathbf{w}dxdt%
\end{array}%
\end{equation*}%
for all $\mathbf{w\in }L^{2}\left( 0,T;H_{0}^{1}\left( \Omega \right)
^{N}\right) $, in view of (\ref{eq4.17}). The aim is to estimate each of the
integrals on the right of the preceding equality. By (\ref{eq4.7}) and (\ref%
{eq4.11}), we see that%
\begin{equation*}
\left\vert \int_{0}^{T}\int_{\Omega _{\varepsilon }}\mathbf{u}_{\varepsilon
}^{\prime }{\small \cdot R}_{\varepsilon }\mathbf{w}dxdt\right\vert \leq 
\sqrt{2}Cc\left( \left\Vert \mathbf{w}\right\Vert _{L^{2}\left( Q\right)
}+\varepsilon \left\Vert \nabla \mathbf{w}\right\Vert _{L^{2}\left( Q\right)
}\right) \text{.}
\end{equation*}%
Next, combining (\ref{eq4.8}) with (\ref{eq4.11}) we get 
\begin{equation*}
\left\vert \nu \int_{0}^{T}\int_{\Omega _{\varepsilon }}\nabla \mathbf{u}%
_{\varepsilon }{\small \cdot }\nabla R_{\varepsilon }\mathbf{w}%
dxdt\right\vert \leq \sqrt{2}\nu Cc\left( \left\Vert \mathbf{w}\right\Vert
_{L^{2}\left( Q\right) }+\varepsilon \left\Vert \nabla \mathbf{w}\right\Vert
_{L^{2}\left( Q\right) }\right) \text{.}
\end{equation*}%
Further, by (\ref{eq4.7}) we get immediately%
\begin{equation*}
\left\vert \int_{0}^{T}\int_{\Omega _{\varepsilon }}\mathbf{f}{\small \cdot }%
R_{\varepsilon }\mathbf{w}dxdt\right\vert \leq \sqrt{2}c\left\Vert \mathbf{f}%
\right\Vert _{L^{2}\left( Q\right) }\left( \left\Vert \mathbf{w}\right\Vert
_{L^{2}\left( Q\right) }+\varepsilon \left\Vert \nabla \mathbf{w}\right\Vert
_{L^{2}\left( Q\right) }\right) \text{.}
\end{equation*}%
Finally, recalling that 
\begin{equation*}
b\left( \mathbf{u}_{\varepsilon }\left( t\right) ,\mathbf{u}_{\varepsilon
}\left( t\right) \mathbf{,}R_{\varepsilon }\mathbf{w}\left( t\right) \right)
=-b\left( \mathbf{u}_{\varepsilon }\left( t\right) ,R_{\varepsilon }\mathbf{w%
}\left( t\right) ,\mathbf{u}_{\varepsilon }\left( t\right) \right) \text{,}
\end{equation*}%
it follows from (\ref{eq2.7}) of Lemma \ref{lem2.1} that 
\begin{equation*}
\left\vert b\left( \mathbf{u}_{\varepsilon }\left( t\right) ,\mathbf{u}%
_{\varepsilon }\left( t\right) \mathbf{,}R_{\varepsilon }\mathbf{w}\left(
t\right) \right) \right\vert \leq \sqrt{2}\left\Vert \mathbf{u}_{\varepsilon
}\left( t\right) \right\Vert _{L^{2}\left( \Omega _{\varepsilon }\right)
}\left\Vert \nabla \mathbf{u}_{\varepsilon }\left( t\right) \right\Vert
_{L^{2}\left( \Omega _{\varepsilon }\right) }\left\Vert \nabla
R_{\varepsilon }\mathbf{w}\left( t\right) \right\Vert _{L^{2}\left( \Omega
_{\varepsilon }\right) }\text{.}
\end{equation*}%
Thus, 
\begin{equation*}
\left\vert \int_{0}^{T}b\left( \mathbf{u}_{\varepsilon }\left( t\right) ,%
\mathbf{u}_{\varepsilon }\left( t\right) \mathbf{,}R_{\varepsilon }\mathbf{w}%
\left( t\right) \right) dt\right\vert \leq \qquad \qquad \qquad \qquad
\qquad \qquad \qquad \qquad \qquad \qquad \qquad \qquad
\end{equation*}%
\begin{equation*}
\qquad \qquad \qquad \qquad \qquad \qquad \sqrt{2}\left\Vert \mathbf{u}%
_{\varepsilon }\right\Vert _{L^{\infty }\left( 0,T;L^{2}\left( \Omega
_{\varepsilon }\right) \right) }\int_{0}^{T}\left\Vert \nabla \mathbf{u}%
_{\varepsilon }\left( t\right) \right\Vert _{L^{2}\left( \Omega
_{\varepsilon }\right) }\left\Vert \nabla R_{\varepsilon }\mathbf{w}\left(
t\right) \right\Vert _{L^{2}\left( \Omega _{\varepsilon }\right) }dt\text{.}
\end{equation*}%
Using (\ref{eq4.8}) and (\ref{eq4.11}), we arrive at 
\begin{equation*}
\left\vert \int_{0}^{T}b\left( \mathbf{u}_{\varepsilon }\left( t\right) ,%
\mathbf{u}_{\varepsilon }\left( t\right) \mathbf{,}R_{\varepsilon }\mathbf{w}%
\left( t\right) \right) dt\right\vert \leq 2Cc\left\Vert \mathbf{u}%
_{\varepsilon }\right\Vert _{L^{\infty }\left( 0,T;L^{2}\left( \Omega
_{\varepsilon }\right) \right) }\left( \left\Vert \mathbf{w}\right\Vert
_{L^{2}\left( Q\right) }+\varepsilon \left\Vert \nabla \mathbf{w}\right\Vert
_{L^{2}\left( Q\right) }\right) \text{.}
\end{equation*}%
Furthermore, by (\ref{eq4.15a})%
\begin{equation}
\left\Vert \mathbf{u}_{\varepsilon }\right\Vert _{L^{\infty }\left(
0,T;L^{2}\left( \Omega _{\varepsilon }\right) \right) }^{2}\leq 2\left\Vert 
\mathbf{f}\right\Vert _{L^{2}\left( Q\right) }\left\Vert \mathbf{u}%
_{\varepsilon }\right\Vert _{L^{2}\left( 0,T;L^{2}\left( \Omega
_{\varepsilon }\right) \right) }\text{.}  \label{eq4.18}
\end{equation}%
Thus, combining (\ref{eq4.11}) and the preceding inequality, we get%
\begin{equation*}
\left\vert \int_{0}^{T}b\left( \mathbf{u}_{\varepsilon }\left( t\right) ,%
\mathbf{u}_{\varepsilon }\left( t\right) \mathbf{,}R_{\varepsilon }\mathbf{w}%
\left( t\right) \right) dt\right\vert \leq \left( 2C\right) ^{\frac{3}{2}%
}c\varepsilon \left\Vert \mathbf{f}\right\Vert _{L^{2}\left( Q\right) }^{%
\frac{1}{2}}\left( \left\Vert \mathbf{w}\right\Vert _{L^{2}\left( Q\right)
}+\varepsilon \left\Vert \nabla \mathbf{w}\right\Vert _{L^{2}\left( Q\right)
}\right) \text{.}
\end{equation*}%
On the other hand, in view of (\ref{eq4.17a}) and property ($P_{2}$) of
Lemma \ref{lem4.2}, we have%
\begin{equation}
\left\langle F_{\varepsilon },\mathbf{w}\right\rangle =0\text{ for all }%
\mathbf{w}\in L^{2}\left( 0,T;V\right) \text{.}  \label{eq4.18a}
\end{equation}%
Further, (\ref{eq4.18a}) is satisfied in particular for all $\mathbf{w}\in 
\mathcal{D}\left( 0,T;\mathcal{V}\right) $. Thus, by a classical argument
(see, e.g., \cite[pp.14-15]{bib31}) we obtain%
\begin{equation*}
F_{\varepsilon }=\nabla \overline{p}_{\varepsilon }
\end{equation*}%
where $\overline{p}_{\varepsilon }\in L^{2}\left( 0,T;L^{2}\left( \Omega
\right) \mathfrak{/}\mathbb{R}\right) $ with 
\begin{equation}
\left\Vert \overline{p}_{\varepsilon }\right\Vert _{L^{2}\left( Q\right)
}\leq c\left\Vert \nabla \overline{p}_{\varepsilon }\right\Vert
_{L^{2}\left( 0,T;H^{-1}\left( \Omega \right) ^{N}\right) }\text{.}
\label{eq4.18c}
\end{equation}%
Moreover, taking in particular $\mathbf{w}=\mathbf{\varphi }\in \mathcal{D}%
\left( 0,T;\mathcal{D}\left( \Omega _{\varepsilon }\right) ^{N}\right) $ in (%
\ref{eq4.17a}) leads to%
\begin{equation*}
\left\langle \nabla \overline{p}_{\varepsilon },\mathbf{\varphi }%
\right\rangle =\left\langle \nabla p_{\varepsilon },\mathbf{\varphi }%
\right\rangle \text{.}
\end{equation*}%
Hence (\ref{eq4.13}) follows by the arbitrariness of $\mathbf{\varphi }$. In
view of the estimates for the right hand of (\ref{eq4.17a}),%
\begin{equation}
\left\vert \left\langle \nabla \overline{p}_{\varepsilon },\mathbf{w}%
\right\rangle \right\vert \leq c_{0}\left( \left\Vert \mathbf{w}\right\Vert
_{L^{2}\left( Q\right) }+\varepsilon \left\Vert \nabla \mathbf{w}\right\Vert
_{L^{2}\left( Q\right) }\right)  \label{eq4.18b}
\end{equation}%
for all $0<\varepsilon <1$ and for all $\mathbf{w\in }L^{2}\left(
0,T;H_{0}^{1}\left( \Omega \right) ^{N}\right) $, where the constant $%
c_{0}>0 $ is independent of $\varepsilon $. We deduce that 
\begin{equation*}
\left\vert \left\langle \nabla \overline{p}_{\varepsilon },\mathbf{w}%
\right\rangle \right\vert \leq c_{0}\left\Vert \mathbf{w}\right\Vert
_{L^{2}\left( 0,T;H_{0}^{1}\left( \Omega \right) ^{N}\right) }\qquad \left( 
\mathbf{w\in }L^{2}\left( 0,T;H_{0}^{1}\left( \Omega \right) ^{N}\right)
\right)
\end{equation*}%
for all $0<\varepsilon <1$. Hence (\ref{eq4.14}) follows (use (\ref{eq4.18c}%
)) with an appropriate constant $C>0$.
\end{proof}

\subsection{\textbf{Homogenization results}}

Let%
\begin{equation*}
W\left( Y\right) =\left\{ \mathbf{w}\in H_{per}^{1}\left( Y\right) ^{N}:%
\mathbf{w}=0\text{ on }Y_{s}\text{, }div_{y}\mathbf{w}=0\right\} \text{.}
\end{equation*}%
This is a Hilbert space with the norm%
\begin{equation*}
\left\Vert \mathbf{w}\right\Vert _{W\left( Y\right) }=\left(
\int_{Y}\left\vert \nabla _{y}\mathbf{w}\right\vert ^{2}dy\right) ^{\frac{1}{%
2}}\text{ \qquad }\left( \mathbf{w}\in W\left( Y\right) \right)
\end{equation*}%
equivalent to the $H_{per}^{1}\left( Y\right) ^{N}$-norm. Now, we define%
\begin{equation*}
\mathcal{H}_{0}\left( Q;L_{per}^{2}\left( Z;W\left( Y\right) \right) \right)
=\left\{ \mathbf{u}\in L^{2}\left( Q;L_{per}^{2}\left( Z;W\left( Y\right)
\right) \right) :div\widetilde{\mathbf{u}}=0\text{, }\widetilde{\mathbf{u}}%
{\small \cdot }\mathbf{n}=0\text{ on }\partial \Omega \times ]0,T[\right\}
\end{equation*}%
where:%
\begin{equation*}
\mathbf{n}=\left( n^{i}\right) _{1\leq i\leq N}\text{ is the outward unit
normal to }\partial \Omega \text{,\qquad \qquad }
\end{equation*}%
\begin{equation*}
\widetilde{\mathbf{u}}\left( x,t\right) =\int \int_{Y\times Z}\mathbf{u}%
\left( x,t,y,\tau \right) dyd\tau \text{ }\left( \left( x,t\right) \in
Q=\Omega \times ]0,T[\right) \text{.}
\end{equation*}%
Provided with the $L^{2}\left( Q;L_{per}^{2}\left( Z;W\left( Y\right)
\right) \right) $-norm, $\mathcal{H}_{0}\left( Q;L_{per}^{2}\left( Z;W\left(
Y\right) \right) \right) $ is a Hilbert space.

We will need the family of vector functions $\left( \mathbf{\chi }%
_{j}\right) _{1\leq j\leq N}$ defined, for each fixed $1\leq j\leq N$, by
the variational problem%
\begin{equation}
\left\{ 
\begin{array}{c}
\mathbf{\chi }_{j}\in L_{per}^{2}\left( Z;W\left( Y\right) \right) :\qquad
\qquad \quad \quad \\ 
\int \int_{Y\times Z}\nabla _{y}\mathbf{\chi }_{j}{\small \cdot }\nabla _{y}%
\mathbf{w}dyd\tau =\int \int_{Y\times Z}w^{j}dyd\tau \\ 
\text{for all }\mathbf{w}=\left( w^{i}\right) \in L_{per}^{2}\left(
Z;W\left( Y\right) \right) \text{.}%
\end{array}%
\right.  \label{eq4.19}
\end{equation}

Let $\left( K_{ij}\right) _{1\leq i,j\leq N}$ be the matrix defined by%
\begin{equation*}
K_{ij}=\int \int_{Y\times Z}\mathcal{\chi }_{i}^{j}\left( y,\tau \right)
dyd\tau \text{.}
\end{equation*}

\begin{lemma}
\label{lem4.6} The matrix $\left( K_{ij}\right) $ is symmetric and positive
definite.
\end{lemma}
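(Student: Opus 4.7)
The plan is to read both the symmetry and the positive definiteness directly off the variational problem \eqref{eq4.19} that defines the $\mathbf{\chi}_j$'s, testing each equation against the others in the classical Tartar manner.

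First I would prove symmetry. By testing \eqref{eq4.19} (written for the index $j$) against the particular choice $\mathbf{w}=\mathbf{\chi}_i$, one gets
\[
\int\!\!\int_{Y\times Z}\nabla_y\mathbf{\chi}_j\cdot\nabla_y\mathbf{\chi}_i\,dyd\tau
=\int\!\!\int_{Y\times Z}\mathcal{\chi}_i^{j}\,dyd\tau=K_{ij}.
\]
Interchanging the roles of $i$ and $j$ gives the same integral on the left, hence $K_{ij}=K_{ji}$. As a by-product, setting $\mathbf{u}_\xi=\sum_{i}\xi_i\mathbf{\chi}_i$ for $\xi=(\xi_i)\in\mathbb{R}^N$, one obtains the fundamental identity
\[
\sum_{i,j=1}^{N}K_{ij}\xi_i\xi_j
=\int\!\!\int_{Y\times Z}|\nabla_y\mathbf{u}_\xi|^{2}\,dyd\tau\;\geq\;0,
\]
which is exactly positive semi-definiteness.

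Next I would turn to strict positivity: assume $\sum_{i,j}K_{ij}\xi_i\xi_j=0$ for some $\xi\in\mathbb{R}^N$, and show $\xi=0$. The identity above forces $\nabla_y\mathbf{u}_\xi=0$ a.e., so $\mathbf{u}_\xi$ is independent of $y$; since each $\mathbf{\chi}_i$ lies in $L_{per}^{2}(Z;W(Y))$ and therefore vanishes on $Y_s$ (which has positive measure), we get $\mathbf{u}_\xi\equiv 0$. Summing \eqref{eq4.19} weighted by $\xi_j$ yields, for every $\mathbf{w}=(w^i)\in L_{per}^{2}(Z;W(Y))$,
\[
\int\!\!\int_{Y\times Z}\nabla_y\mathbf{u}_\xi\cdot\nabla_y\mathbf{w}\,dyd\tau
=\int\!\!\int_{Y\times Z}\xi\cdot\mathbf{w}\,dyd\tau;
\]
the left-hand side being zero, we conclude that $\xi\cdot\int_Y\mathbf{w}(y)\,dy=0$ for every $\mathbf{w}\in W(Y)$ (testing with $\tau$-independent functions).

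The main obstacle is therefore the final geometric step: showing that the linear map $L:W(Y)\to\mathbb{R}^N$, $L(\mathbf{w})=\int_Y\mathbf{w}(y)\,dy$, is surjective, from which $\xi=0$ will follow. This is where the hypothesis $\overline{Y}_s\subset Y$ (with smooth boundary) enters crucially: because $\partial Y\subset Y_f$, the periodic extension of $Y_f$ is connected along every coordinate direction, so for each $e_k$ in the canonical basis of $\mathbb{R}^N$ one can build a smooth $Y$-periodic, divergence-free vector field supported in a tubular channel of $Y_f$ winding around the torus in direction $e_k$, vanishing in a neighbourhood of $\overline{Y}_s$, and whose mean over $Y$ has a nonzero $k$-th component (one takes a solenoidal cut-off of a uniform field in the channel, as in Tartar's construction for the Darcy law, see e.g.\ \cite{bib12}, \cite{bib23}). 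Such a field belongs to $W(Y)$, and by considering $N$ of them one spans $\mathbb{R}^N$, proving surjectivity of $L$. This concludes positive definiteness and hence the lemma.
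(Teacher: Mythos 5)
Your proof is correct and follows essentially the same route as the paper: symmetry and positive semi-definiteness are read off the identity $K_{ij}=\int\!\!\int_{Y\times Z}\nabla _{y}\mathbf{\chi }_{j}{\small \cdot }\nabla _{y}\mathbf{\chi }_{i}\,dyd\tau $, and nondegeneracy is reduced to producing $\mathbf{w}\in W\left( Y\right) $ whose mean over $Y$ is (proportional to) $\xi $. The only difference is that the paper outsources this last existence statement to \cite[Remark 4.3]{bib23}, whereas you sketch the standard divergence-free channel construction, which is valid here; in fact straight channels parallel to each coordinate axis suffice, since $\overline{Y}_{s}\subset Y$ leaves room near the faces of the cell.
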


\begin{proof}
It follows immediately from (\ref{eq4.19}) that 
\begin{equation*}
K_{ij}=\int \int_{Y\times Z}\nabla _{y}\mathbf{\chi }_{j}{\small \cdot }%
\nabla _{y}\mathbf{\chi }_{i}dyd\tau =\int_{Z}\left( \mathbf{\chi }_{j},%
\mathbf{\chi }_{i}\right) _{W\left( Y\right) }d\tau \text{,}
\end{equation*}%
where $\left( ,\right) _{W\left( Y\right) }$ denotes the inner product
associated with the norm $\left\Vert .\right\Vert _{W\left( Y\right) }$ on $%
W\left( Y\right) $. The symmetry property of $\left( K_{ij}\right) $ follows
at once. Now, we have

\noindent $\sum_{i,j=1}^{N}K_{ij}\xi _{i}\xi _{j}=\int_{Z}\left( \mathbf{u}%
\left( \xi \right) ,\mathbf{u}\left( \xi \right) \right) _{W\left( Y\right)
}d\tau $ for all $\xi =\left( \xi _{i}\right) \in \mathbb{R}^{N}$, where $%
\mathbf{u}\left( \xi \right) =\sum_{i=1}^{N}\xi _{i}\mathbf{\chi }_{i}$. The
positivity follows. Let us check the nondegeneracy. Suppose we are given
some $\xi =\left( \xi _{i}\right) $ such that $\sum_{i,j=1}^{N}K_{ij}\xi
_{i}\xi _{j}=0$. Then, $\mathbf{u}\left( \xi \right) =0$. We deduce by (\ref%
{eq4.19}) that 
\begin{equation}
\sum_{i=1}^{N}\xi _{i}\int \int_{Y\times Z}w^{i}dyd\tau =0  \label{eq4.19a}
\end{equation}%
for all $\mathbf{w}=\left( w^{i}\right) \in L_{per}^{2}\left( Z;W\left(
Y\right) \right) $, and in particular (\ref{eq4.19a}) holds true for all $%
\mathbf{w}=1\otimes \mathbf{v}$ with $\mathbf{v}=\left( w^{i}\right) \in
W\left( Y\right) $. Choosing in (\ref{eq4.19a}) $\mathbf{w=}1\otimes \mathbf{%
v}$ with $\mathbf{v}\in W\left( Y\right) $ such that $\int_{Y}\mathbf{v}%
dy=\xi $ (see \cite[Remark 4.3]{bib23}) leads to $\xi _{j}=0$ $\left(
j=1,...,N\right) $ and so the lemma is proved.
\end{proof}

We are now able to prove the following homogenization theorem.

\begin{theorem}
\label{th4.1} Assume that the hypotheses of Lemma \ref{lem4.3} are satisfied
and $\mathbf{f}\in L^{2}\left( 0,T;H^{1}\left( \Omega \right) ^{N}\right) $.
Let $\left( \mathbf{u}_{\varepsilon },p_{\varepsilon }\right) $ be the
solution of (\ref{eq1.7})-(\ref{eq1.10}). Let $\mathbf{u}_{\varepsilon }$ be
identified with its extension by zero in $\left( \Omega \backslash \Omega
_{\varepsilon }\right) \times ]0,T[$, and let $\overline{p}_{\varepsilon }$
be defined in Lemma \ref{lem4.3}. Then, as $\varepsilon \rightarrow 0$, 
\begin{equation}
\frac{u_{\varepsilon }^{i}}{\varepsilon ^{2}}\rightarrow u_{0}^{i}\text{ in }%
L^{2}\left( Q\right) \text{-weak }\Sigma \text{ \quad }\left( 1\leq i\leq
N\right) \text{,\qquad \quad }  \label{eq4.20}
\end{equation}%
\begin{equation}
\frac{1}{\varepsilon }\frac{\partial u_{\varepsilon }^{i}}{\partial x_{j}}%
\rightarrow \frac{\partial u_{0}^{i}}{\partial y_{j}}\text{ in }L^{2}\left(
Q\right) \text{-weak }\Sigma \quad \left( 1\leq i,j\leq N\right) \text{,}
\label{eq4.21}
\end{equation}%
\begin{equation}
\overline{p}_{\varepsilon }\rightarrow p_{0}\text{ in }L^{2}\left( Q\right) 
\text{,\qquad \qquad \qquad \qquad \qquad \qquad }  \label{eq4.22}
\end{equation}%
where $\mathbf{u}_{0}=\left( u_{0}^{i}\right) $ is uniquely defined by the
variational problem%
\begin{equation}
\left\{ 
\begin{array}{c}
\mathbf{u}_{0}\in \mathcal{H}_{0}\left( Q;L_{per}^{2}\left( Z;W\left(
Y\right) \right) \right) :\qquad \qquad \\ 
\nu \int_{Q}\int \int_{Y\times Z}\nabla _{y}\mathbf{u}_{0}{\small \cdot }%
\nabla _{y}\mathbf{v}dxdtdyd\tau =\int_{Q}\mathbf{f}{\small \cdot }%
\widetilde{\mathbf{v}}dxdt \\ 
\text{for all }\mathbf{v}\in \mathcal{H}_{0}\left( Q;L_{per}^{2}\left(
Z;W\left( Y\right) \right) \right) \text{,\qquad }%
\end{array}%
\right.  \label{eq4.23}
\end{equation}%
and $p_{0}$ is the unique function in $L^{2}\left( 0,T;L^{2}\left( \Omega
\right) \mathfrak{/}\mathbb{R}\right) $ such that 
\begin{equation}
\widetilde{u}_{0}^{i}=\sum_{j=1}^{N}\frac{K_{ij}}{\nu }\left( f^{j}-\frac{%
\partial p_{0}}{\partial x_{j}}\right) \qquad \left( 1\leq i\leq N\right) 
\text{.}  \label{eq4.24}
\end{equation}
\end{theorem}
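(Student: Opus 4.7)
The plan is to combine the a priori estimates from Lemma \ref{lem4.3} with the $\Sigma$-convergence machinery of Section 2 to extract a weakly two-scale convergent subsequence, identify the macroscopic and microscopic structure of the limits, pass to the limit in the weak formulation of (\ref{eq1.7})-(\ref{eq1.10}) with appropriately scaled oscillating test functions, and finally invoke uniqueness in (\ref{eq4.23})-(\ref{eq4.24}) to upgrade the convergence along subsequences to the full family.

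First, by (\ref{eq4.11}) and (\ref{eq4.14}), the sequences $\left( \varepsilon ^{-2}u_{\varepsilon }^{i}\right) $, $\left( \varepsilon ^{-1}\partial u_{\varepsilon }^{i}/\partial x_{j}\right) $ and $\left( \overline{p}_{\varepsilon }\right) $ are bounded in $L^{2}\left( Q\right) $. Given any fundamental sequence $E$, Theorem \ref{th2.1} provides a subsequence $E^{\prime }$ along which these sequences weakly $\Sigma $-converge to $u_{0}^{i}$, $\xi ^{ij}$ and $p$ respectively, in $L^{2}\left( Q;L_{per}^{2}\left( Y\times Z\right) \right) $. A standard two-scale integration by parts (test with $\varphi \left( x,t\right) \psi \left( x/\varepsilon ,t/\varepsilon \right) $) gives $\xi ^{ij}=\partial u_{0}^{i}/\partial y_{j}$; the constraint $div\,\mathbf{u}_{\varepsilon }=0$ yields $div_{y}\,\mathbf{u}_{0}=0$; and the vanishing of $u_{\varepsilon }^{i}$ on $\varepsilon \Theta \cap \Omega $ forces $\mathbf{u}_{0}=0$ on $Y_{s}$ (after approximating the indicator of $Y_{s}$ by continuous functions). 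Since the zero-extension of $\mathbf{u}_{\varepsilon }$ lies in $H_{0}^{1}\left( \Omega \right) ^{N}$ with $div\,\mathbf{u}_{\varepsilon }=0$, the $L^{2}$-weak limit $\widetilde{\mathbf{u}}_{0}=\int \int_{Y\times Z}\mathbf{u}_{0}\,dyd\tau $ satisfies $div\,\widetilde{\mathbf{u}}_{0}=0$ in $Q$ and $\widetilde{\mathbf{u}}_{0}\cdot \mathbf{n}=0$ on $\partial \Omega \times ]0,T[$. Altogether $\mathbf{u}_{0}\in \mathcal{H}_{0}\left( Q;L_{per}^{2}\left( Z;W\left( Y\right) \right) \right) $.

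Next, to show that $p=p_{0}\left( x,t\right) $ is microscopically constant, test (\ref{eq1.7}) against $\mathbf{w}_{\varepsilon }=\varepsilon \mathbf{\psi }^{\varepsilon }$ with $\mathbf{\psi }\left( x,t,y,\tau \right) $ smooth, $Y\times Z$-periodic, compactly supported in $\left( x,t\right) $, and vanishing on $Y_{s}$ (so that $\mathbf{w}_{\varepsilon }\in H_{0}^{1}\left( \Omega _{\varepsilon }\right) ^{N}$). The acceleration, diffusion and convection terms are each $O\left( \varepsilon \right) $ by (\ref{eq4.11}) and (\ref{eq2.7}), leaving in the limit $\int \int \int_{Q\times Y\times Z}p\,div_{y}\mathbf{\psi }\,dxdtdyd\tau =0$. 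Integrating by parts in $y$ (boundary terms on $\partial Y_{s}$ vanish because $\mathbf{\psi }=0$ there) forces $\nabla _{y}p=0$ in $Y_{f}$, hence $p$ is $y$-independent by connectedness of $Y_{f}$; an analogous argument with a $\tau $-oscillating field gives $\partial _{\tau }p=0$, so $p=p_{0}\left( x,t\right) $. To derive (\ref{eq4.23}), now test (\ref{eq1.7}) against $\mathbf{v}_{\varepsilon }=\mathbf{\Phi }^{\varepsilon }$ with $\mathbf{\Phi }\in \mathcal{D}\left( Q;\mathbb{R}\right) \otimes \left[ \mathcal{C}_{per}^{\infty }\left( Z;\mathbb{R}\right) \otimes W\left( Y\right) \right] $ (hence $\mathbf{v}_{\varepsilon }\in H_{0}^{1}\left( \Omega _{\varepsilon }\right) ^{N}$ and $div\,\mathbf{v}_{\varepsilon }=div_{x}\mathbf{\Phi }^{\varepsilon }$). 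After integrating by parts in $t$, the acceleration term is $O\left( \varepsilon \right) $ (using $\left\Vert \mathbf{u}_{\varepsilon }\right\Vert _{L^{2}}=O\left( \varepsilon ^{2}\right) $), the convection term is $O\left( \varepsilon ^{3}\right) $, and passing to the limit in the remaining terms yields
\begin{equation*}
\nu \int_{Q}\int \int_{Y\times Z}\nabla _{y}\mathbf{u}_{0}\cdot \nabla _{y}\mathbf{\Phi }\,dxdtdyd\tau -\int_{Q}p_{0}\,div_{x}\widetilde{\mathbf{\Phi }}\,dxdt=\int_{Q}\mathbf{f}\cdot \widetilde{\mathbf{\Phi }}\,dxdt\text{.}
\end{equation*}
Restricting to $\mathbf{\Phi }\in \mathcal{H}_{0}\left( Q;L_{per}^{2}\left( Z;W\left( Y\right) \right) \right) $ (so that $div\,\widetilde{\mathbf{\Phi }}=0$) removes the pressure contribution, and density yields (\ref{eq4.23}).

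Uniqueness of $\mathbf{u}_{0}$ in (\ref{eq4.23}) follows from Lax-Milgram on the closed Hilbert space $\mathcal{H}_{0}\left( Q;L_{per}^{2}\left( Z;W\left( Y\right) \right) \right) $, coercivity coming from the Poincar\'{e} inequality on $W\left( Y\right) $. To obtain (\ref{eq4.24}), substitute $\mathbf{v}\left( x,t,y,\tau \right) =\varphi \left( x,t\right) \mathbf{\chi }_{j}\left( y,\tau \right) $ (with $\mathbf{\chi }_{j}$ from (\ref{eq4.19})) into the limit equation with pressure: the defining identity for $\mathbf{\chi }_{j}$ converts $\int \int_{Y\times Z}\nabla _{y}\mathbf{u}_{0}\cdot \nabla _{y}\mathbf{\chi }_{j}\,dyd\tau $ into a component of $\widetilde{\mathbf{u}}_{0}$, and integration by parts in $x$ yields the algebraic identity (\ref{eq4.24}); Lemma \ref{lem4.6} guarantees that $\left( K_{ij}\right) $ is invertible, so $\mathbf{f}-\nabla p_{0}$ is uniquely determined by $\widetilde{\mathbf{u}}_{0}$ and $p_{0}$ is pinned down modulo a time-dependent constant (further fixed by the zero-mean normalization). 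Since $\left( \mathbf{u}_{0},p_{0}\right) $ does not depend on the extracted subsequence $E^{\prime }$, the full family $\left( \mathbf{u}_{\varepsilon },\overline{p}_{\varepsilon }\right) $ converges, proving (\ref{eq4.20})-(\ref{eq4.22}). The main obstacle is the microscopic constancy of $p$: because the natural scaling of $\overline{p}_{\varepsilon }$ is only $O\left( 1\right) $, one must test against suitably $\varepsilon $-scaled oscillating divergence-free fields so that the pressure becomes the dominant term while all other contributions collapse.
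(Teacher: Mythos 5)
Your overall skeleton (a priori bounds from Lemma \ref{lem4.3}, two-scale compactness, identification of $\partial u_0^i/\partial y_j$, of the constraints $\mathbf{u}_0=0$ on $Y_s$, $div_y\mathbf{u}_0=0$, $div\,\widetilde{\mathbf{u}}_0=0$, $\widetilde{\mathbf{u}}_0\cdot\mathbf{n}=0$, passage to the limit against test functions built from $W(Y)$, Darcy's law via $\mathbf{\chi}_j$, and uniqueness to free the convergence from the subsequence) coincides with the paper's. But there is a genuine gap in your treatment of the pressure, and it concerns precisely the assertion (\ref{eq4.22}). That statement claims \emph{strong} convergence $\overline{p}_{\varepsilon}\rightarrow p_{0}$ in $L^{2}(Q)$, whereas your argument only ever produces a weak $\Sigma$-limit $p(x,t,y,\tau)$ of $\overline{p}_{\varepsilon}$ together with a separate oscillating-test-function argument showing $\nabla_y p=0$ on $Y_f$. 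That is enough to pass to the limit in the pressure term and derive (\ref{eq4.23})--(\ref{eq4.24}), but it does not prove (\ref{eq4.22}); moreover your constancy argument only controls $p$ on $Y_f\times Z$, while the two-scale limit of $\overline{p}_{\varepsilon}$ lives on all of $Y\times Z$, so even the identification $p=p_0(x,t)$ needs an extra step there.

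The paper's mechanism, which you are missing, is the $\varepsilon$-weighted estimate (\ref{eq4.18b}): $\left\vert \left\langle \nabla \overline{p}_{\varepsilon },\mathbf{w}\right\rangle \right\vert \leq c_{0}\left( \left\Vert \mathbf{w}\right\Vert _{L^{2}\left( Q\right) }+\varepsilon \left\Vert \nabla \mathbf{w}\right\Vert _{L^{2}\left( Q\right) }\right)$, which comes from the restriction operator $R_\varepsilon$ of Lemma \ref{lem4.2}. Because the gradient contribution carries a factor $\varepsilon$, one gets $\left\langle \nabla \overline{p}_{\varepsilon },\mathbf{w}_{\varepsilon }\right\rangle \rightarrow \left\langle \nabla p_{0},\mathbf{w}\right\rangle$ for every weakly convergent sequence $\mathbf{w}_{\varepsilon}\rightharpoonup\mathbf{w}$ in $L^{2}(0,T;H_{0}^{1}(\Omega)^{N})$ (using compactness of the embedding into $L^{2}(Q)^{N}$), i.e. $\nabla \overline{p}_{\varepsilon }\rightarrow \nabla p_{0}$ \emph{strongly} in $L^{2}(0,T;H^{-1}(\Omega)^{N})$; the Ne\v{c}as-type inequality (\ref{eq4.18c}) then upgrades this to strong convergence of $\overline{p}_{\varepsilon}$ in $L^{2}(Q)$. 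This single device simultaneously yields (\ref{eq4.22}) and makes the limit pressure a function of $(x,t)$ alone, so no microscopic-constancy argument is needed at all. Two smaller remarks: your bound $O(\varepsilon^{3})$ for the convection term is off (with $\left\Vert \nabla \mathbf{\Phi }^{\varepsilon }\right\Vert _{L^{2}}=O(\varepsilon^{-1})$ the correct order is $O(\varepsilon)$, which still suffices), and your integration by parts in $t$ for the acceleration term is fine and in fact cleaner than the paper's justification.
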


\begin{proof}
Let us first observe that the existence and unicity of $\mathbf{u}_{0}$ in (%
\ref{eq4.23}) is trivial (use, e.g., the Lax-Milgram lemma). On the other
hand, taking account of $\widetilde{\mathbf{u}}_{0}\cdot \mathbf{n}=0$ on $%
\partial \Omega \times ]0,T[$ and $div\widetilde{\mathbf{u}}_{0}=0$, we see
that if $p_{0}$ lies in $L^{2}\left( 0,T;L^{2}\left( \Omega \right) 
\mathfrak{/}\mathbb{R}\right) $ and verifies (\ref{eq4.24}), then $p_{0}$
satisfies 
\begin{equation}
\left\{ 
\begin{array}{c}
-\sum_{i,j=1}^{N}\frac{K_{ij}}{\nu }\frac{\partial ^{2}p_{0}}{\partial
x_{i}\partial x_{j}}=f\text{\quad in }\Omega \times ]0,T[\text{,}\quad \\ 
\sum_{i,j=1}^{N}\frac{K_{ij}}{\nu }\frac{\partial p_{0}}{\partial x_{j}}%
n^{i}=g\quad \text{on }\partial \Omega \times ]0,T[\text{,}%
\end{array}%
\right.  \label{eq4.25}
\end{equation}%
where: 
\begin{equation*}
f=-\sum_{i,j=1}^{N}\frac{K_{ij}}{\nu }\frac{\partial f^{j}}{\partial x_{i}}%
\text{,}
\end{equation*}%
\begin{equation*}
g=\sum_{i,j=1}^{N}\frac{K_{ij}}{\nu }\left( f^{j}\mathfrak{\mid }_{\partial
\Omega \times ]0,T[}\right) n^{i}\text{.}
\end{equation*}%
But (\ref{eq4.25}) is a Neumann type problem admitting one, and only one,
solution $p_{0}$ in $L^{2}\left( 0,T;H^{1}\left( \Omega \right) \mathfrak{/}%
\mathbb{R}\right) $.

Now, as seen earlier, the sequences $\left( \frac{\mathbf{u}_{\varepsilon }}{%
\varepsilon ^{2}}\right) _{0<\varepsilon <1}$, $\left( \overline{p}%
_{\varepsilon }\right) _{0<\varepsilon <1}$, $\left( \frac{\nabla \mathbf{u}%
_{\varepsilon }}{\varepsilon }\right) _{0<\varepsilon <1}$ and

\noindent $\left( \frac{\partial \mathbf{u}_{\varepsilon }}{\partial t}%
\right) _{0<\varepsilon <1}$ are bounded in the $L^{2}\left( Q\right) $
norm. Thus, given a fundamental sequence $E$ (i.e., $E$ is an ordinary
sequence of reals $0<\varepsilon _{n}<1$ such that $\varepsilon
_{n}\rightarrow 0$ as $n\rightarrow \infty $), by well known compactness
results (see in particular \cite{bib1}, \cite{bib11}) we can extract a
subsequence $E^{\prime }$ from $E$ such that as $E^{\prime }\ni \varepsilon
\rightarrow 0$, we have (\ref{eq4.20}), $\overline{p}_{\varepsilon
}\rightarrow p_{0}$ in $L^{2}\left( Q\right) $-weak and%
\begin{equation}
\frac{1}{\varepsilon }\frac{\partial u_{\varepsilon }^{i}}{\partial x_{j}}%
\rightarrow z_{ij}\text{ in }L^{2}\left( Q\right) \text{-weak }\Sigma \quad
\left( 1\leq i,j\leq N\right) \text{,}  \label{eq4.26}
\end{equation}%
where $u_{0}^{i}$, $z_{ij}\in L^{2}\left( Q;L_{per}^{2}\left(
Z;L_{per}^{2}\left( Y\right) \right) \right) $ and $p_{0}\in L^{2}\left(
0,T;L^{2}\left( \Omega \right) \mathfrak{/}\mathbb{R}\right) $. But based on
(\ref{eq4.18b}), if $\left( \mathbf{w}_{\varepsilon }\right) _{\varepsilon
\in E^{\prime }}$ is a sequence in $\mathcal{W}\left( 0,T\right) $ such that 
$\mathbf{w}_{\varepsilon }\rightarrow \mathbf{w}$ in $\mathcal{W}\left(
0,T\right) $-weak as $E^{\prime }\ni \varepsilon \rightarrow 0$, then%
\begin{equation}
\left\vert \left\langle \nabla \overline{p}_{\varepsilon },\mathbf{w}%
_{\varepsilon }\right\rangle -\left\langle \nabla p_{0},\mathbf{w}%
\right\rangle \right\vert \leq c\left( \left\Vert \mathbf{w}_{\varepsilon }-%
\mathbf{w}\right\Vert _{L^{2}\left( Q\right) }+\varepsilon \left\Vert \nabla 
\mathbf{w}_{\varepsilon }-\nabla \mathbf{w}\right\Vert _{L^{2}\left(
Q\right) }\right) +\left\vert \left\langle \nabla \overline{p}_{\varepsilon
}-\nabla p_{0},\mathbf{w}\right\rangle \right\vert \text{,}  \label{eq4.26a}
\end{equation}%
and we see that the right hand of (\ref{eq4.26a}) tends to zero as $%
E^{\prime }\ni \varepsilon \rightarrow 0$, since $\nabla \overline{p}%
_{\varepsilon }\rightarrow \nabla p_{0}$ in $L^{2}\left( 0,T;H^{-1}\left(
\Omega \right) ^{N}\right) $-weak and $\mathbf{w}_{\varepsilon }\rightarrow 
\mathbf{w}$ in $L^{2}\left( Q\right) ^{N}$ as $E^{\prime }\ni \varepsilon
\rightarrow 0$ ($\mathcal{W}\left( 0,T\right) $ is compactly embedded in $%
L^{2}\left( Q\right) ^{N}$). Consequently, $\nabla \overline{p}_{\varepsilon
}\rightarrow \nabla p_{0}$ in $L^{2}\left( 0,T;H^{-1}\left( \Omega \right)
^{N}\right) $-strong as $E^{\prime }\ni \varepsilon \rightarrow 0$. Thus by (%
\ref{eq4.18c}) we see that the sequence $\left( \overline{p}_{\varepsilon
}\right) _{\varepsilon \in E^{\prime }}$ actually strongly converges in $%
L^{2}\left( Q\right) $ to $p_{0}$, so that (\ref{eq4.22}) holds when $%
E^{\prime }\ni \varepsilon \rightarrow 0$.

The next point is to check that $\mathbf{u}_{0}\in L^{2}\left(
Q;L_{per}^{2}\left( Z;H_{per}^{1}\left( Y\right) ^{N}\right) \right) $ with 
\begin{equation}
\frac{\partial u_{0}^{i}}{\partial y_{j}}=z_{ij}\text{ \qquad \quad }\left(
1\leq i,j\leq N\right) \text{.}  \label{eq4.27}
\end{equation}

To do this, consider a function $\mathcal{\psi }:Q\times \mathbb{R}%
_{y}^{N}\times \mathbb{R}_{\tau }\rightarrow \mathbb{R}$ of the form%
\begin{equation}
\mathcal{\psi }\left( x,t,y,\tau \right) =\varphi \left( x,t\right) \Phi
\left( y,\tau \right) \qquad \qquad \left( \left( x,t\right) \in Q\text{, }%
\left( y,\tau \right) \in \mathbb{R}^{N}\times \mathbb{R}\right)
\label{eq4.27a}
\end{equation}%
with $\varphi \in \mathcal{D}\left( Q\right) $, $\Phi \in \mathcal{C}%
_{per}^{\infty }\left( Y\times Z\right) =\mathcal{C}_{per}\left( Y\times
Z\right) \cap \mathcal{C}^{\infty }\left( \mathbb{R}_{y}^{N}\times \mathbb{R}%
_{\tau }\right) $.

\noindent We have%
\begin{equation*}
\frac{1}{\varepsilon }\int_{Q}\frac{\partial u_{\varepsilon }^{i}}{\partial
x_{j}}\mathcal{\psi }^{\varepsilon }dxdt=-\frac{1}{\varepsilon }%
\int_{Q}u_{\varepsilon }^{i}\frac{\partial \mathcal{\psi }^{\varepsilon }}{%
\partial x_{j}}dxdt\qquad \qquad \qquad \qquad \qquad \qquad \quad
\end{equation*}%
\begin{equation*}
\qquad \qquad =-\frac{1}{\varepsilon }\int_{Q}u_{\varepsilon }^{i}\left[
\left( \frac{\partial \mathcal{\psi }}{\partial x_{j}}\right) ^{\varepsilon
}+\frac{1}{\varepsilon }\left( \frac{\partial \mathcal{\psi }}{\partial y_{j}%
}\right) ^{\varepsilon }\right] dxdt
\end{equation*}%
\begin{equation*}
\qquad \quad \qquad =-\int_{Q}\frac{u_{\varepsilon }^{i}}{\varepsilon }%
\left( \frac{\partial \mathcal{\psi }}{\partial x_{j}}\right) ^{\varepsilon
}dxdt-\int_{Q}\frac{u_{\varepsilon }^{i}}{\varepsilon ^{2}}\left( \frac{%
\partial \mathcal{\psi }}{\partial y_{j}}\right) ^{\varepsilon }dxdt\text{.}
\end{equation*}%
Letting $E^{\prime }\ni \varepsilon \rightarrow 0$ and recalling (\ref%
{eq4.20}) and (\ref{eq4.26}), we are quickly led to%
\begin{equation*}
\frac{\partial u_{0}^{i}\left( x,t,.,.\right) }{\partial y_{j}}=z_{ij}\left(
x,t,.,.\right) \text{ a.e. in }\left( x,t\right) \in Q\text{\qquad }\left(
1\leq i,j\leq N\right) \text{,}
\end{equation*}%
which shows that $\mathbf{u}_{0}$ belongs to $L^{2}\left(
Q;L_{per}^{2}\left( Z;H_{per}^{1}\left( Y\right) ^{N}\right) \right) $ with (%
\ref{eq4.27}) hence (\ref{eq4.21}) (as $E^{\prime }\ni \varepsilon
\rightarrow 0$).

Now, let us check that $\mathbf{u}_{0}\left( x,t,.,.\right) =0$ in $%
Y_{s}\times Z$ for almost all $\left( x,t\right) \in Q$. We consider $\psi $
as in (\ref{eq4.27a}) such that $\Phi =0$ in $Y_{f}\times Z$. As $E^{\prime
}\ni \varepsilon \rightarrow 0$, (\ref{eq4.20}) yields 
\begin{equation*}
0=\frac{1}{\varepsilon ^{2}}\int_{Q}u_{\varepsilon }^{i}\psi ^{\varepsilon
}dxdt\rightarrow \int_{Q}\int \int_{Y_{s}\times Z}u_{0}^{i}\left( x,t,y,\tau
\right) \varphi \left( x,t\right) \Phi \left( y,\tau \right) dxdtdyd\tau 
\text{.}
\end{equation*}%
Consequently%
\begin{equation*}
\int \int_{Y_{s}\times Z}u_{0}^{i}\left( x,t,y,\tau \right) \Phi \left(
y,\tau \right) dyd\tau \text{ a.e. in }\left( x,t\right) \in Q\text{\quad }%
\left( 1\leq i\leq N\right)
\end{equation*}%
(since $\varphi $ is arbitrary) and for all $\Phi \in \mathcal{C}%
_{per}^{\infty }\left( Y\times Z\right) $ verifying $\Phi =0$ in $%
Y_{f}\times Z$. Thus, $\mathbf{u}_{0}\left( x,t,.,.\right) =0$ in $%
Y_{s}\times Z$ a.e. in $\left( x,t\right) \in Q$. Furthermore, let $\varphi
\in \mathcal{D}\left( Q\right) $ and $w\in \mathcal{C}_{per}^{\infty }\left(
Y\times Z\right) $. According to (\ref{eq4.20}), we have as $E^{\prime }\ni
\varepsilon \rightarrow 0$,%
\begin{equation*}
\sum_{i=1}^{N}\int_{Q}\frac{u_{\varepsilon }^{i}\left( x,t\right) }{%
\varepsilon ^{2}}\varphi \left( x,t\right) \frac{\partial w}{\partial y_{i}}%
\left( \frac{x}{\varepsilon },\frac{t}{\varepsilon }\right) dxdt\rightarrow
\sum_{i=1}^{N}\int_{Q}\int \int_{Y\times Z}u_{0}^{i}\varphi \frac{\partial w%
}{\partial y_{i}}dxdtdyd\tau \text{.}
\end{equation*}%
But the lefthand side reduces to the term%
\begin{equation*}
-\sum_{i=1}^{N}\int_{Q}\frac{u_{\varepsilon }^{i}\left( x,t\right) }{%
\varepsilon }w\left( \frac{x}{\varepsilon },\frac{t}{\varepsilon }\right) 
\frac{\partial \varphi }{\partial x_{i}}\left( x,t\right) dxdt
\end{equation*}%
which goes to zero as $\varepsilon \rightarrow 0$. Hence, by the
arbitrariness of $\varphi $, 
\begin{equation*}
\int \int_{Y\times Z}div_{y}\mathbf{u}_{0}\left( x,.\right) wdyd\tau =0\text{%
\qquad \quad a.e. in }\left( x,t\right) \in Q\text{,}
\end{equation*}%
and that for any $w\in \mathcal{C}_{per}^{\infty }\left( Y\times Z\right) $.
Therefore $div_{y}\mathbf{u}_{0}\left( x,t,.,.\right) =0$ a.e. in $\left(
x,t\right) \in Q$. Hence $\mathbf{u}_{0}\in L^{2}\left( Q;L_{per}^{2}\left(
Z;W\left( Y\right) \right) \right) $.

Let us verify that $\mathbf{u}_{0}$ belongs to $\mathcal{H}_{0}\left(
Q;L_{per}^{2}\left( Z;W\left( Y\right) \right) \right) $. Clearly as $%
E^{\prime }\ni \varepsilon \rightarrow 0$,%
\begin{equation*}
\frac{\mathbf{u}_{\varepsilon }}{\varepsilon ^{2}}\rightarrow \widetilde{%
\mathbf{u}}_{0}\text{\quad in\quad }L^{2}\left( Q\right) ^{N}\text{-weak.}
\end{equation*}%
Since the operator $\mathbf{u}\rightarrow div\mathbf{u}$ sends continuously $%
L^{2}\left( 0,T;L^{2}\left( \Omega \right) ^{N}\right) $ into $L^{2}\left(
0,T;H^{-1}\left( \Omega \right) \right) $, it follows that%
\begin{equation*}
\frac{div\mathbf{u}_{\varepsilon }}{\varepsilon ^{2}}\rightarrow div%
\widetilde{\mathbf{u}}_{0}\text{\quad in\quad }L^{2}\left( 0,T;H^{-1}\left(
\Omega \right) \right) \text{-weak.}
\end{equation*}%
Hence $div\widetilde{\mathbf{u}}_{0}=0$. On the other hand, by the Stokes
formula we have 
\begin{equation*}
\sum_{i=1}^{N}\int_{Q}\frac{u_{\varepsilon }^{i}}{\varepsilon ^{2}}\theta 
\frac{\partial \varphi }{\partial x_{i}}dxdt=0
\end{equation*}%
for all $\theta \in \mathcal{D}\left( ]0,T[\right) $ and all $\varphi \in 
\mathcal{D}\left( \overline{\Omega }\right) $. Hence, on letting $E^{\prime
}\ni \varepsilon \rightarrow 0$, it follows%
\begin{equation*}
\int_{\Omega }\widetilde{\mathbf{u}}_{0}\left( x,t\right) {\small \cdot }%
\nabla \varphi \left( x\right) dx=0\text{ a.e. in }t\in ]0,T[
\end{equation*}%
(since $\theta $ is arbitrary) for all $\varphi \in \mathcal{D}\left( 
\overline{\Omega }\right) $. This shows that $\widetilde{\mathbf{u}}_{0}%
{\small \cdot }\mathbf{n}=0$ on $\partial \Omega \times ]0,T[$ (use the
Stokes formula) and so $\mathbf{u}_{0}\in \mathcal{H}_{0}\left(
Q;L_{per}^{2}\left( Z;W\left( Y\right) \right) \right) $.

Finally, if we prove that $\mathbf{u}_{0}$ satisfies the variational
equation in (\ref{eq4.23}) and $p_{0}$ satisfies (\ref{eq4.24}) then, by
virtue of the unicity in (\ref{eq4.23}) and (\ref{eq4.24}), it will follow
that (\ref{eq4.20})-(\ref{eq4.22}) hold not only as $E^{\prime }\ni
\varepsilon \rightarrow 0$ but also as $E\ni \varepsilon \rightarrow 0$ and
further as $0<\varepsilon \rightarrow 0$, owing to the arbitrariness of $E$;
and so the proof will be complete.

For this purpose, we introduce the space%
\begin{equation*}
\mathcal{H}\left( Q;L_{per}^{2}\left( Z;W\left( Y\right) \right) \right)
=\left\{ \mathbf{u}\in L^{2}\left( Q;L_{per}^{2}\left( Z;W\left( Y\right)
\right) \right) :div\widetilde{\mathbf{u}}\in L^{2}\left( Q\right) \text{, }%
\widetilde{\mathbf{u}}{\small \cdot }\mathbf{n}=0\text{ on }\partial \Omega
\times ]0,T[\right\} \text{,}
\end{equation*}%
which is a Hilbert space with the norm%
\begin{equation*}
\left\Vert \mathbf{u}\right\Vert _{\mathcal{H}\left( Q;L_{per}^{2}\left(
Z;W\left( Y\right) \right) \right) }=\left( \left\Vert \mathbf{u}\right\Vert
_{L^{2}\left( Q;L_{per}^{2}\left( Z;W\left( Y\right) \right) \right)
}^{2}+\left\Vert div\widetilde{\mathbf{u}}\right\Vert _{L^{2}\left( Q\right)
}^{2}\right) ^{\frac{1}{2}}\text{.}
\end{equation*}%
Next, let $\mathbf{\Phi }\in \mathcal{D}\left( Q\right) \otimes \mathcal{C}%
_{per}^{\infty }\left( Z\right) \otimes W\left( Y\right) $. We recall the
standard notation 
\begin{equation*}
\mathbf{\Phi }^{\varepsilon }\left( x,t\right) =\mathbf{\Phi }\left( x,t,%
\frac{x}{\varepsilon },\frac{t}{\varepsilon }\right) \text{\qquad for\quad }%
\left( x,t\right) \in Q\text{,}
\end{equation*}%
and the formulas 
\begin{equation*}
\nabla \mathbf{\Phi }^{\varepsilon }=\left( \nabla _{x}\mathbf{\Phi }\right)
^{\varepsilon }+\frac{1}{\varepsilon }\left( \nabla _{y}\mathbf{\Phi }%
\right) ^{\varepsilon }
\end{equation*}%
\begin{equation*}
div\mathbf{\Phi }^{\varepsilon }=\left( div_{x}\mathbf{\Phi }\right)
^{\varepsilon }+\frac{1}{\varepsilon }\left( div_{y}\mathbf{\Phi }\right)
^{\varepsilon }\text{.}
\end{equation*}%
Having made this point, take now in (\ref{eq4.17}) the particular test
function $\mathbf{v}=\mathbf{\Phi }^{\varepsilon }\left( t\right) $. This
yields 
\begin{equation*}
\int_{Q}\frac{\partial \mathbf{u}_{\varepsilon }}{\partial t}\mathbf{\Phi }%
^{\varepsilon }dxdt+\nu \int_{Q}\nabla \mathbf{u}_{\varepsilon }{\small %
\cdot }\left( \nabla _{x}\mathbf{\Phi }\right) ^{\varepsilon }dxdt+\nu
\int_{Q}\frac{1}{\varepsilon }\nabla \mathbf{u}_{\varepsilon }{\small \cdot }%
\left( \nabla _{y}\mathbf{\Phi }\right) ^{\varepsilon }dxdt
\end{equation*}%
\begin{equation*}
+\int_{0}^{T}b\left( \mathbf{u}_{\varepsilon }\left( t\right) ,\mathbf{u}%
_{\varepsilon }\left( t\right) ,\mathbf{\Phi }^{\varepsilon }\left( t\right)
\right) dt-\int_{Q}\overline{p}_{\varepsilon }\left( div_{x}\mathbf{\Phi }%
\right) ^{\varepsilon }dxdt=\int_{Q}\mathbf{f}{\small \cdot }\mathbf{\Phi }%
^{\varepsilon }dxdt\text{.}
\end{equation*}

The aim is to pass to the limit as $E^{\prime }\ni \varepsilon \rightarrow 0$%
. In view of the inequalities in (\ref{eq4.11}), we clearly have%
\begin{equation*}
\int_{Q}\nabla \mathbf{u}_{\varepsilon }{\small \cdot }\left( \nabla _{x}%
\mathbf{\Phi }\right) ^{\varepsilon }dxdt\rightarrow 0\text{ and }\int_{Q}%
\frac{\partial \mathbf{u}_{\varepsilon }}{\partial t}\mathbf{\Phi }%
^{\varepsilon }dxdt\rightarrow 0
\end{equation*}%
as $0<\varepsilon \rightarrow 0$. On the other hand, using the equality 
\begin{equation*}
b\left( \mathbf{u}_{\varepsilon }\left( t\right) ,\mathbf{u}_{\varepsilon
}\left( t\right) ,\mathbf{\Phi }^{\varepsilon }\left( t\right) \right)
=-b\left( \mathbf{u}_{\varepsilon }\left( t\right) ,\mathbf{\Phi }%
^{\varepsilon }\left( t\right) ,\mathbf{u}_{\varepsilon }\left( t\right)
\right)
\end{equation*}%
and (\ref{eq2.7}) (of Lemma \ref{lem2.1}), we have 
\begin{equation*}
\left\vert \int_{0}^{T}b\left( \mathbf{u}_{\varepsilon }\left( t\right) ,%
\mathbf{u}_{\varepsilon }\left( t\right) ,\mathbf{\Phi }^{\varepsilon
}\left( t\right) \right) dt\right\vert \leq 2^{\frac{1}{2}}\left\Vert 
\mathbf{u}_{\varepsilon }\right\Vert _{L^{\infty }\left( 0,T;L^{2}\left(
\Omega \right) \right) }\int_{0}^{T}\left\Vert \nabla \mathbf{u}%
_{\varepsilon }\left( t\right) \right\Vert _{L^{2}\left( \Omega \right)
}\left\Vert \nabla \mathbf{\Phi }^{\varepsilon }\left( t\right) \right\Vert
_{L^{2}\left( \Omega \right) }dt
\end{equation*}%
\begin{equation*}
\qquad \qquad \qquad \qquad \leq 2^{\frac{1}{2}}\left\Vert \mathbf{u}%
_{\varepsilon }\right\Vert _{L^{\infty }\left( 0,T;L^{2}\left( \Omega
\right) \right) }\left\Vert \nabla \mathbf{u}_{\varepsilon }\right\Vert
_{L^{2}\left( Q\right) }\left\Vert \nabla \mathbf{\Phi }^{\varepsilon
}\right\Vert _{L^{2}\left( Q\right) }\text{.}
\end{equation*}%
Hence, by (\ref{eq4.11}) and (\ref{eq4.18}) we get 
\begin{equation*}
\left\vert \int_{0}^{T}b\left( \mathbf{u}_{\varepsilon }\left( t\right) ,%
\mathbf{u}_{\varepsilon }\left( t\right) ,\mathbf{\Phi }^{\varepsilon
}\left( t\right) \right) dt\right\vert \leq K\varepsilon ^{2}\left\Vert
\nabla \mathbf{\Phi }^{\varepsilon }\right\Vert _{L^{2}\left( Q\right) }
\end{equation*}%
where $K>0$ is a constant independent of $\varepsilon $. But%
\begin{equation*}
\sup_{0<\varepsilon <1}\varepsilon \left\Vert \nabla \mathbf{\Phi }%
^{\varepsilon }\right\Vert _{L^{2}\left( Q\right) }<\infty \text{.}
\end{equation*}%
Therefore, as $0<\varepsilon \rightarrow 0$%
\begin{equation*}
\int_{0}^{T}b\left( \mathbf{u}_{\varepsilon }\left( t\right) ,\mathbf{u}%
_{\varepsilon }\left( t\right) ,\mathbf{\Phi }^{\varepsilon }\left( t\right)
\right) dt\rightarrow 0\text{.}
\end{equation*}%
Finally, by (\ref{eq4.20})-(\ref{eq4.22}), one quickly arrives at 
\begin{equation}
\nu \int_{Q}\int \int_{Y\times Z}\nabla _{y}\mathbf{u}_{0}{\small \cdot }%
\nabla _{y}\mathbf{\Phi }dxdtdyd\tau -\int_{Q}p_{0}div\widetilde{\mathbf{%
\Phi }}dxdt=\int_{Q}\mathbf{f}{\small \cdot }\widetilde{\mathbf{\Phi }}dxdt
\label{eq4.28}
\end{equation}%
and that for any $\mathbf{\Phi }\in \mathcal{D}\left( Q\right) \otimes 
\mathcal{C}_{per}^{\infty }\left( Z\right) \otimes W\left( Y\right) $.
Recalling that $\mathcal{D}\left( Q\right) \otimes \mathcal{C}_{per}^{\infty
}\left( Z\right) \otimes W\left( Y\right) $ is dense in $\mathcal{H}\left(
Q;L_{per}^{2}\left( Z;W\left( Y\right) \right) \right) $ (see \cite{bib20}),
(\ref{eq4.28}) holds for all

\noindent $\mathbf{\Phi }\in \mathcal{H}\left( Q;L_{per}^{2}\left( Z;W\left(
Y\right) \right) \right) $. Therefore, the variational equation in (\ref%
{eq4.20}) follows at once by taking in particular $\mathbf{\Phi }=\mathbf{v}$
with $\mathbf{v}\in \mathcal{H}_{0}\left( Q;L_{per}^{2}\left( Z;W\left(
Y\right) \right) \right) $. Thus, the proof is complete once (\ref{eq4.24})
is established. To achieve this, let $1\leq j\leq N$ be arbitrary fixed.
Take in (\ref{eq4.28}) $\mathbf{\Phi }=\varphi \otimes \mathbf{\chi }_{j}$,
where $\mathbf{\chi }_{j}$ is defined by (\ref{eq4.19}) and $\varphi $ is
freely fixed in $\mathcal{D}\left( Q\right) $. Then it is an easy matter to
arrive at 
\begin{equation*}
\nu \int \int_{Y\times Z}\nabla _{y}\mathbf{u}_{0}\left( x,t\right) {\small %
\cdot }\nabla _{y}\mathbf{\chi }_{j}dy+\sum_{i=1}^{N}K_{ij}\left( \frac{%
\partial p_{0}}{\partial x_{i}}\left( x,t\right) -f^{i}\left( x,t\right)
\right) =0
\end{equation*}%
a.e. in $\left( x,t\right) \in Q$. But 
\begin{equation*}
\int \int_{Y\times Z}\nabla _{y}\mathbf{u}_{0}\left( x,t\right) {\small %
\cdot }\nabla _{y}\mathbf{\chi }_{j}dy=\widetilde{u}_{0}^{j}\left(
x,t\right) 
\end{equation*}%
as is immediate by taking in (\ref{eq4.19}) $\mathbf{w}=\mathbf{u}_{0}\left(
x,t\right) $ for fixed $\left( x,t\right) \in Q$. Hence (\ref{eq4.24})
follows. The theorem is proved.
\end{proof}

\noindent \textbf{Conclusion. }In our study, we have been limited in spatial
dimension $N=2$. It would be interesting to investigate the case $N=3$,
costumary used in physics. Unfurtunately, we come up against the lack of
uniqueness for Non-stationary Navier-Stokes equations in dimension $N\geq 3$%
. Moreover, for flows in porous media, we have been interested uniquely for
the periodic case, the problem beyond the periodic setting being not only to
be formulated mathematically, but to be justified by physics.

However, one convergence theorem has been proved for each problem, and we
have derived the macroscopic homogenized model.

\end{document}